\documentclass[a4paper, 10pt]{article}

\usepackage{algorithmic}
\usepackage{algorithm}
\usepackage{amsmath}
\usepackage{amsfonts}
\usepackage{amssymb}
\usepackage{amsthm}

\usepackage{graphicx}
\usepackage[caption=false]{subfig}
\usepackage[utf8]{inputenc}

\usepackage{siunitx} 
\usepackage{booktabs}

\usepackage{setspace}
\usepackage[displaymath, mathlines]{lineno}

\newtheorem{theorem}{Theorem}

\newtheorem{lemma}{Lemma}
\newtheorem{property}{Property}

\newtheorem{corollary}{Corollary}

\newtheorem*{lemma_el_new}{Lemma~\ref{lemma_el_new}}
\newtheorem*{lemma_mono}{Lemma~\ref{lemma_mono}}
\newtheorem*{lemma_mu}{Lemma~\ref{lemma_mu}}
\newtheorem*{lemma_posdef}{Lemma~\ref{lemma_posdef}}
\newtheorem*{lemma_xk}{Lemma~\ref{lemma_xk}}

\providecommand{\customgenericname}{}
\newcommand{\newcustomtheorem}[2]{%
  \newenvironment{#1}[1]
  {%
   \renewcommand\customgenericname{#2}%
   \renewcommand\theinnercustomgeneric{##1}%
   \innercustomgeneric
  }
  {\endinnercustomgeneric}
}

\newcustomtheorem{customthm}{Theorem}
\newcustomtheorem{customlemma}{Lemma}

\usepackage{comment}
\usepackage[hidelinks]{hyperref} 
\usepackage[a4paper,margin=2.54cm]{geometry}
\title{Quadratic nonseparable resource allocation problems with generalized bound constraints}
\author{Martijn H. H. Schoot Uiterkamp, Marco E. T. Gerards, Johann L. Hurink
\\
University of Twente}

\usepackage{multicol}

\usepackage{hyperref}
\makeatletter
\newcommand*{\textlabel}[2]{%
  \edef\@currentlabel{#1}
  \phantomsection
  #1\label{#2}
}
\makeatother

\begin{document}
\maketitle

\begin{abstract}
We study a quadratic nonseparable resource allocation problem that arises in the area of decentralized energy management (DEM), where unbalance in electricity networks has to be minimized. In this problem, the given resource is allocated over a set of activities that is divided into subsets, and a cost is assigned to the overall allocated amount of resources to activities within the same subset. We derive two efficient algorithms with $O(n\log n)$ worst-case time complexity to solve this problem. For the special case where all subsets have the same size, one of these algorithms even runs in linear time given the subset size. Both algorithms are inspired by well-studied breakpoint search methods for separable convex resource allocation problems. Numerical evaluations on both real and synthetic data confirm the theoretical efficiency of both algorithms and demonstrate their suitability for integration in DEM systems.
\end{abstract}

\section{Introduction}
\label{sec_intro}
Resource allocation problems belong to the fundamental problems in the operations research literature. These problems involve the allocation of a given resource (e.g., money or energy) over a set of activities (e.g., projects or time slots) while minimizing a given cost function or maximizing a given utility function. In its simplest form, the problem can be formulated mathematically as follows:
\begin{align*}
\text{RAP} : \
\min_{x \in \mathbb{R}^n} \
&
\sum_{i=1}^n f_i(x_i) \\
\text{s.t. }
& \sum_{i=1}^n x_i = R \\
& l_i \leq x_i \leq u_i, \quad i \in \lbrace 1,\ldots,n \rbrace
\end{align*}
Here, each variable~$x_i$ represents the amount of the total resource~$R \in \mathbb{R}$ that is allocated to activity~$i$ and the values $l_i,u_i \in \mathbb{R}$ are lower and upper bounds on the amount allocated to activity~$i$. Moreover, each function $f_i : \mathbb{R} \rightarrow \mathbb{R}$ assigns a cost to allocating resource to activity~$i$.

In this article, we study the following more specific allocation problem, which is an extension of the quadratic resource allocation problem:
\begin{subequations}
       \makeatletter
        \def\@currentlabel{QRAP-NonSep-GBC}
        \makeatother
        \renewcommand{\theequation}{1\alph{equation}}
\label{prob} 
\begin{align}
\text{QRAP-NonSep-GBC} : \
\min_{x \in \mathbb{R}^n} \ & \sum_{j=1}^m \frac{1}{2} w_j \left( \sum_{i \in \mathcal{N}_j}  x_i \right)^2 + \sum_{i=1}^n \left(\frac{1}{2}a_i x_i^2 + b_i x_i \right) \label{eq_p_01} \\
\text{s.t. } & \sum_{i=1}^n  x_i = R \label{eq_p_02} \\
& L_j \leq \sum_{i \in \mathcal{N}_j} x_i \leq U_j, \quad j \in \lbrace 1,\ldots,m \rbrace \label{eq_p_03} \\
& l_i \leq x_i \leq u_i, \quad i \in \lbrace 1,\ldots,n \rbrace. \label{eq_p_04}
\end{align} 
\end{subequations}
where $w,b,l,u \in \mathbb{R}^n$, $a \in \mathbb{R}^n_{> 0}$, $R \in \mathbb{R}$, and $L,U \in \mathbb{R}^m$ are given inputs. Furthermore, in this problem, a partition of the index set $\mathcal{N} := \lbrace 1,\ldots,n\rbrace$ into $m$ disjoint subsets $\mathcal{N}_1,\ldots,\mathcal{N}_m$ of size $n_1,\ldots,n_j$ indexed by $\mathcal{M} := \lbrace 1,\ldots,m \rbrace$ is given. The objective function of Problem~\ref{prob} assigns for each subset $\mathcal{N}_j$ a cost to the sum of all allocated amounts associated with this subset and to the individual amounts. Similarly, Constraints~(\ref{eq_p_03}) and~(\ref{eq_p_04}) put bounds on the sum of all variables associated with each given subset and and on the individual variables.

Our interest in studying this problem stems from its application in decentralized energy management (DEM). The aim of DEM is to optimize the simultaneous energy consumption of multiple devices within a neighborhood. Compared to other energy management paradigms such as \emph{centralized} energy management, within a DEM system devices optimize their own consumption locally and the control system coordinates the local optimization of these devices to optimize certain neighborhood objectives.

In particular, we are interested in the local optimization of a specific device class within DEM, namely the scheduling of electric vehicles (EVs) that are equipped with a three-phase charger. This means that the EV can distribute its charging arbitrarily over all the three phases of the low-voltage network. Recent studies show that three-phase EV charging, as opposed to single-phase EV charging, can reduce losses in the electricity grid, reduce the stress on grid assets, and thereby prevent outages caused by a high penetration of EVs charging simultaneously on a single phase (\cite{Weckx2015, SchootUiterkamp2017}). We discuss this issue in more detail in Section~\ref{sec_mot} and we show that the three-phase EV charging problem can be modeled as an instance of Problem~\ref{prob}.

An important aspect of the DEM paradigm is that device-level problems, such as the aforementioned three-phase EV charging problem, are solved locally. This means that the corresponding device-level optimization algorithms are executed on embedded systems located within, e.g., households or the charging equipment. It is important that these algorithms are very efficient with regard to both execution time and memory, since often they are called multiple times within the DEM system and the embedded systems on which the algorithms run have limited computational power and memory (see, e.g., \cite{Beaudin2015}). Therefore, efficient and tailored device-level optimization algorithms are crucial ingredients for the real-life implementation of DEM systems. In particular, to solve the three-phase EV charging problem, an efficient algorithm to solve Problem~\ref{prob} is required. 

For more background on DEM we refer to \cite{Siano2014, Esther2016}. Other applications of Problem~\ref{prob} are in the areas of, e.g., portfolio optimization (see, e.g., \cite{Lobo2007}), stratified sampling \cite{Sanathanan1971}, and transportation problems (see, e.g., \cite{Cosares1994}).

Problem~\ref{prob} can be classified as a quadratic \emph{nonseparable} resource allocation problem with generalized bound constraints (Constraint~(\ref{eq_p_03})). The nonseparability is due to the terms $(\sum_{i \in \mathcal{N}_j} x_i)^2$, which cannot be written as the sum of single-variable functions and are thus nonseparable. When the factors $w_j$ are zero, these nonseparable terms disappear and Problem~\ref{prob} becomes the quadratic \emph{separable} resource allocation problem with generalized bound constraints. In the literature, this problem has hardly been studied: a special case that includes only generalized upper bound constraints is studied in \cite{Hochbaum1995} and \cite{Bretthauer1997}. When in addition the generalized bound constraints are omitted, Problem~\ref{prob} reduces to the quadratic \emph{simple} separable resource allocation problem. This problem and its extension to convex cost functions has been well-studied (see, e.g., \cite{Patriksson2008, Patriksson2015} and the references therein).

Observe that Problem~\ref{prob} can be modeled as a minimum convex quadratic cost flow problem if $w \geq 0$. Therefore, this case can be solved in strongly polynomial time \cite{Vegh2016}. In fact, since its network structure is series-parallel, it can be solved by the algorithms in \cite{Tamir1993} and \cite{Moriguchi2011} in $O(n^2)$ time. However, when some of the factors $w_j$ are negative, existing approaches for solving this type of flow problem do not apply anymore. In particular, this holds for the aforementioned EV scheduling problem in DEM, where the objective of minimizing load unbalance is modeled as an instance of Problem~\ref{prob} by setting some or all of the factors $w_j$ to a negative number (see also Section~\ref{sec_prob_EV}).

In this article, we present two $O(n \log n)$ time algorithms for strictly convex instances of Problem~\ref{prob}, thereby adding a new problem to the small class of quadratic programming problems that can be solved efficiently in strongly polynomial time. For this, we derive a property of problem instances that uniquely characterizes the class of strictly convex instances to the problem. This class includes problems in which some or all of the factors $w_j$ are negative and, in particular, includes the three-phase EV charging problem. Our algorithms are, in their essence, breakpoint search algorithms. This type of algorithm is commonly used to solve separable resource allocation problems. Such algorithms consider the Lagrangian dual of the original problem and exploit the structure of the Karush-Kuhn-Tucker (KKT) optimality conditions to efficiently search for the optimal (dual) multiplier associated with the resource constraint~(\ref{eq_p_02}). We show that for (strictly) convex instances of Problem~\ref{prob}, these conditions can be exploited in a similar way. 

For the case where all subsets $\mathcal{N}_j$ have the same size, i.e., where all $n_j$'s are equal to some constant $C$, we show that one of the derived algorithms runs in $O(n \log C)$ time, i.e., given~$C$ this algorithm has a linear time complexity. Thereby, we add a new problem to the (even smaller) class of quadratic programming problems that can be solved in linear time and we show that the three-phase EV charging problem can be solved in $O(n)$ time. Furthermore, we show for the special case where all weights $w_j$ are zero, i.e., the quadratic separable resource allocation problem with generalized bound constraints, that both Problem~\ref{prob} and its version with integer variables can be solved in $O(n)$ time. Although the version with integer variables is not the main focus of this article, it may be of independent interest for research on general resource allocation problems where often both the continuous and integer version of a given resource allocation problem are studied in parallel (see, e.g., \cite{Hochbaum1994,Moriguchi2011}). 

We evaluate the performance of our algorithms on both realistic instances of the three-phase EV charging problem and synthetically generated instances of different sizes. These evaluations suggest that our algorithms are suitable for integration in DEM systems since they are fast and do not require much memory. Furthermore, they show that our algorithms scale well when the number~$m$ of subsets or the subset sizes~$n_j$ increases, i.e., the evolution of their execution time matches the theoretical worst-case complexity of $O(n \log n)$. In fact, we show that our algorithms are capable of outperforming the commercial solver MOSEK by two orders of magnitude for instances of up to 1 million variables.

The remainder of this article is organized as follows. In Section~\ref{sec_mot}, we explain in more detail the application of Problem~\ref{prob} in DEM and, specifically, in three-phase EV scheduling. In Section~\ref{sec_analysis}, we analyze the structure of Problem~\ref{prob} and derive a crucial property of feasible solutions to the problem. We use this property to derive our solution approach to solve Problem~\ref{prob} in Section~\ref{sec_sol} and in Section~\ref{sec_alg_two}, we present two $O(n \log n)$ algorithms based on this approach. In Section~\ref{sec_eval}, we evaluate the performance of our algorithms and, finally, Section~\ref{sec_concl} contains some concluding remarks.

Summarizing, the contributions of this article are as follows:
\begin{enumerate}
\item
We derive two $O(n \log n)$ time algorithms for Problem~\ref{prob}. In contrast to existing work \cite{Tamir1993,Moriguchi2011}, this algorithm can be applied to all strictly convex instances of Problem~\ref{prob}, even those where some or all of the factors $w_j$ are negative.
\item
For the special case where all subsets $\mathcal{N}_j$ have the same size~$C$, we show that one of our algorithms runs in linear time given~$C$, hereby extending the small class of quadratic programming problems that are solvable in linear time.
\item Our algorithm solves an important problem in DEM and can make a significant impact on the integration of EVs in residential distribution grids.
\end{enumerate}

\section{Motivation}
\label{sec_mot}
In this section, we describe in more detail our motivation for studying Problem~\ref{prob}. For this, Section~\ref{sec_3} provides a short introduction to load balancing in three-phase electricity networks and discusses the relevance of minimizing load unbalance. In Section~\ref{sec_prob_EV}, we formulate the three-phase EV charging problem and show that this problem is an instance of Problem~\ref{prob}.

\subsection{Load balancing in three-phase electricity networks}
\label{sec_3}

Load balancing has as goal to distribute the power consumption of a neighborhood over a given time horizon and over the three phases of the low-voltage network such that peak consumption and unbalance between phases is minimized. Peak consumption occurs when the consumption is not spread out equally over the time horizon but instead is concentrated within certain time periods. This is generally seen as non-desirable since it induces an increase in energy losses, stress on grid assets such as transformers, and can even lead to outages (see, e.g., \cite{Hoogsteen2017b}). As a consequence, many DEM systems in the literature take into account the minimization of peak consumption when scheduling, e.g., EV charging (see, e.g., \cite{Gan2013,Gerards2015,Mou2015}).

However, minimization of load unbalance between phases is hardly considered in optimization approaches for EV scheduling. To explain the relevance of load unbalance minimization, in the following we first consider three-phase electricity networks in general (for a more detailed and comprehensive introduction to this topic, we refer to \cite{Stevenson1975} and \cite{AAC}). 

In residential electricity distribution networks (or, more generally, low-voltage networks), electrical energy is transported by electrical current that flows through a conductor (e.g., a wire). This current can be seen as a signal with a given frequency and amplitude, which leads to (alternating current) power, i.e.,  the average energy transported in each cycle. In principle, only one supply conductor is required to transport electrical energy between two points. However, it is more efficient to divide this energy over three bundled conductors whose currents have the same frequency but an equidistance phase shift. This means that there is a phase difference of 120 degrees between each pair of conductors. Networks wherein the conductors are bundled in this way are referred to as \emph{three-phase} networks, where the term ``phase'' generally refers to one of the three bundled conductors.  Figure~\ref{fig_phase_01} illustrates the concept of three-phase systems.

In order to maximize the efficiency of a three-phase network, ideally the power consumption from all three phases is equal. When this is not the case, negative effects similar to those of peak consumption can occur, i.e., energy losses, wearing of grid assets, and outages. With the increasing penetration of EVs in the low-voltage network, actively maintaining load balance becomes important. This is mainly because the power consumption of an EV is in general much larger than the average power consumption of a household (see, e.g., \cite{Schootuiterkamp2016}) and most EVs, especially in the Netherlands, are connected to only one of the three phases. As a consequence, when charging multiple EVs simultaneously, large load unbalance can occur when the (charging of the) EVs are (is) not divided equally over the phases \cite{Hoogsteen2017b}.

Recently, \cite{Weckx2015} and \cite{SchootUiterkamp2017} explored the potential of three-phase EV charging, i.e., allowing an EV to distribute its charging over the three phases for minimizing load unbalance. Both works suggest that three-phase EV charging can significantly reduce the distribution losses and stress on the grid compared to single-phase EV charging, even when using the same DEM methodology.

\begin{figure}[ht!]
\centering
\caption{Schematic view of the three-phase system. $I_1$, $I_2$ and $I_3$ represent the current on each of the three phases and $\phi_1$, $\phi_2$, and $\phi_3$ represent the phase angles (with regard to the horizontal axis). The light gray arrows represent a balanced load distribution, whereas the black arrows represent load unbalance.} \label{fig_phase_01}
{\includegraphics{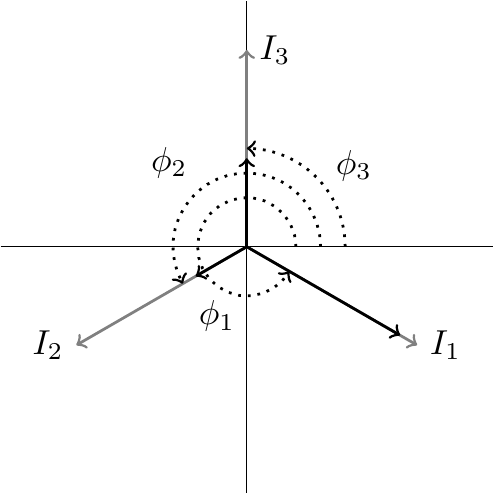}}
\end{figure}

\subsection{Modeling the three-phase EV charging problem}
\label{sec_prob_EV}

The problem of three-phase EV charging with the objective to minimize peak consumption and load unbalance can be modeled as an instance of Problem~\ref{prob}. For this, we consider a division of the scheduling horizon into~$m$ equidistant time intervals of length $\Delta t$ labeled according to $\mathcal{M}:= \lbrace 1,\ldots,m \rbrace$. Furthermore, we define the set $\mathcal{P} := \lbrace 1,2,3 \rbrace$ as the set of phases. We introduce for each $j \in \mathcal{M}$ and $p \in \mathcal{P}$ the variable $z_{j,p}$ that denotes the power consumption of the EV drawn from phase~$p$ during time interval~$j$. Moreover, we denote by $q_{j,p}$ be the remaining household power consumption drawn from phase~$p$ during interval~$j$. This consumption is assumed to be known. Furthermore, we assume that we know on forehand the total required energy that must be charged by the EV and denote this requirement by~$\tilde{R}$. Finally, we denote the minimum and maximum allowed power consumption from phase~$p$ during interval~$j$ by $\tilde{l}_{j,p}$ and $\tilde{u}_{j,p}$ respectively and the minimum and maximum allowed consumption from all three phases summed together by $\tilde{L}_j$ and $\tilde{U}_j$ respectively.

The objective of minimizing peak consumption can be achieved by ``flattening out'' the overall consumption as much as possible over the time intervals. Thus, noting that the term $\sum_{p =1}^3 (q_{j,p} + z_{j,p})$ represents the total power consumption during interval~$j$, we model this objective by minimizing the function
\begin{equation*}
\sum_{j=1}^m \left( \sum_{p =1}^3 (q_{j,p} + z_{j,p}) \right)^2 .
\end{equation*}
For minimizing load unbalance, we aim to equally distribute the consumption during each time interval~$j$ over the three phases. We can model the objective of minimizing load unbalance by minimizing the function
\begin{equation}
\sum_{j=1}^m \left( \frac{3}{2} \sum_{p=1}^3 (q_{j,p} + z_{j,p})^2 -\frac{1}{2} \left(\sum_{p=1}^3 (q_{j,p} + z_{j,p}) \right)^2 \right)
\label{eq_obj_phase}
\end{equation}
(see Appendix~\ref{app_obj} for the derivation of this expression). This leads to the following optimization problem that we denote by \textlabel{EV-3Phase}{prob_EV}:
\begin{align*}
\text{EV-3Phase}: \
\min_{z \in \mathbb{R}^{m \times 3}} \ & W_1 \sum_{j=1}^m \left( \sum_{p=1}^3 (q_{j,p} + z_{j,p}) \right)^2 + W_2   \sum_{j=1}^m \left( \frac{3}{2} \sum_{p=1}^3 (q_{j,p} + z_{j,p})^2 - \frac{1}{2} \left(\sum_{p=1}^3 (q_{j,p} + z_{j,p}) \right)^2 \right) \\
\text{s.t. } & \sum_{j=1}^m \sum_{p=1}^3 z_{j,p} \Delta t =  \tilde{R}, \\
& \tilde{L}_j \leq \sum_{p=1}^3 z_{j,p}  \leq \tilde{U}_j, \quad j \in \mathcal{M} \\
& \tilde{l}_{j,p} \leq z_{j,p} \leq \tilde{u}_{j,p}, \quad j \in \mathcal{M}, \ p \in \mathcal{P}.
\end{align*}
Here, $W_1$ and $W_2$ are positive weights that express the trade-off between the two objectives. By choosing the parameters as given in Table~\ref{tab_01}, this problem becomes an instance of Problem~\ref{prob} (see also Appendix~\ref{app_obj}). Observe that if $W_2 > 2W_1$, the weights $w_j$ are negative and thus Problem~\ref{prob_EV} cannot be solved as a minimum convex quadratic cost flow problem using, e.g., the algorithms in \cite{Tamir1993} and \cite{Moriguchi2011}.
\begin{table}[ht!]
\centering
\begin{tabular}{ll | ll}
\toprule
\multicolumn{2}{l |}{Parameter / variable in Problem~\ref{prob}} & \multicolumn{2}{l}{Parameter / variable in Problem~\ref{prob_EV}} \\
\midrule
$\mathcal{N}_j$, &$j \in \mathcal{M}$ & $\mathcal{P} := \lbrace 1,2,3 \rbrace$ \\
$(x_i)_{i \in \mathcal{N}_j}$, &$j \in \mathcal{M}$ & $(z_{j,p})_{p \in \mathcal{P}}$, & $j \in \mathcal{M}$ \\
$w_j$, &$j \in \mathcal{M}$ & $2 W_1 - W_2$ \\
$a_i$, & $i \in \mathcal{N}$ & $3W_2$ \\
$b_i$, & $j \in \mathcal{M}$, $i \in \mathcal{N}_j$ & $\left(W_1 - \frac{1}{2} W_2 \right) \sum_{p=1}^3  q_{j,p} + \frac{3}{2}W_2 q_{j,p}$ \\
$R$ && $\frac{\tilde{R}}{\Delta t}$ \\
$(l_i)_{i \in \mathcal{N}_j}$, &$j \in \mathcal{M}$ & $(\tilde{l}_{j,p})_{p \in \mathcal{P}}$ \\
$(u_i)_{i \in \mathcal{N}_j}$, &$j \in \mathcal{M}$ & $(\tilde{u}_{j,p})_{p \in \mathcal{P}}$ \\
$L_j$, & $j \in \mathcal{M}$ & $\tilde{L}_j$, &$j \in \mathcal{M}$ \\
$U_j$, & $j \in \mathcal{M}$ & $\tilde{U}_j$, &$j \in \mathcal{M}$ \\
\bottomrule
\end{tabular}
\caption{Modeling Problem~\ref{prob_EV} as an instance of Problem~\ref{prob}.}
\label{tab_01}
\end{table}

\section{Analysis}
\label{sec_analysis}

In this section, we consider the general version of Problem~\ref{prob} and derive some of its properties. First, in Section~\ref{sec_convex}, we derive a necessary and sufficient condition on the vectors $w$ and $a$ for strict convexity of Problem~\ref{prob}. Moreover, we show that the three-phase EV charging problem as presented in Section~\ref{sec_prob_EV} satisfies this condition. Second, in Section~\ref{sec_replace}, we show that we may replace Constraint~(\ref{eq_p_03}) by equivalent single-variable constraints without changing the optimal solution to the problem. This greatly simplifies the derivation of our solution approach in Section~\ref{sec_sol}. Third, in Section~\ref{sec_KKT}, we derive a property of the structure of optimal solutions to Problem~\ref{prob} that forms the crucial ingredient for our solution approach to solve the problem.

\subsection{Convex instances of Problem~\ref{prob}}
\label{sec_convex}
Since all constraints of Problem~\ref{prob} are linear, the problem is strictly convex if and only if the second-derivative matrix (the Hessian) of its objective function is positive definite. Since this objective function is separable over the indices $j$, it suffices to investigate for each $j \in \mathcal{M}$ separately if the function
\begin{equation*}
f_j ((x_i)_{i \in \mathcal{N}_j}) := \frac{1}{2} w_j \left( \sum_{i \in \mathcal{N}_j} x_i \right)^2 + \sum_{i \in \mathcal{N}_j} \left(\frac{1}{2} a_i x_i^2 + b_i x_i \right)
\end{equation*}
is strictly convex. We do this by checking whether the Hessian $H^j$ of $f_j$ is positive definite. This Hessian is given by
\begin{equation*}
H^j := w_j e e^{\top} + \text{diag}(a^j),
\end{equation*}
where $e$ is the vector of ones of appropriate size and $a^j := (a_i)_{i \in \mathcal{N}_j}$. Lemma~\ref{lemma_posdef} provides a characterization for which choices of $w_j$ and $a^j$ the Hessian $H^j$ is positive definite. This characterization can also be obtained as a special case of Theorem~1 in~\cite{Spedicato1975}.

\begin{lemma}
$H^j$ is positive definite if and only if $1 + w_j \sum_{i \in \mathcal{N}_j} 1/a_{i} > 0$.
\label{lemma_posdef}
\end{lemma}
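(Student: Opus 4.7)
The plan is to work directly with the quadratic form
\[
q(x) := x^\top H^j x = w_j (e^\top x)^2 + \sum_{i \in \mathcal{N}_j} a_i x_i^2,
\]
and establish positive definiteness by bounding $q$ from below using Cauchy–Schwarz. The quantity $\sum_{i \in \mathcal{N}_j} 1/a_i$ appears naturally from this inequality, which is what connects the analytic condition with the algebraic one on $H^j$.

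First I would dispose of the easy case $w_j \geq 0$. Since $a_i > 0$ for all $i$, we immediately have $q(x) \geq \sum_{i \in \mathcal{N}_j} a_i x_i^2 > 0$ for every $x \neq 0$, so $H^j$ is positive definite, and on the other hand $1 + w_j \sum_{i \in \mathcal{N}_j} 1/a_i \geq 1 > 0$, so both sides of the claimed equivalence hold trivially.

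The substantive case is $w_j < 0$. Here I would apply Cauchy–Schwarz to the vectors $\bigl(\sqrt{a_i}\, x_i\bigr)_i$ and $\bigl(1/\sqrt{a_i}\bigr)_i$, giving
\[
(e^\top x)^2 \;=\; \Biggl(\sum_{i \in \mathcal{N}_j} \sqrt{a_i}\, x_i \cdot \tfrac{1}{\sqrt{a_i}} \Biggr)^{\!2} \;\leq\; \Biggl(\sum_{i \in \mathcal{N}_j} a_i x_i^2\Biggr) \Biggl(\sum_{i \in \mathcal{N}_j} \tfrac{1}{a_i}\Biggr),
\]
with equality iff $x_i = c/a_i$ for some constant $c$. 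Multiplying by the negative number $w_j$ flips the inequality and yields
\[
q(x) \;\geq\; \Biggl(\sum_{i \in \mathcal{N}_j} a_i x_i^2\Biggr) \Biggl(1 + w_j \sum_{i \in \mathcal{N}_j} \tfrac{1}{a_i}\Biggr).
\]
The ``if'' direction now falls out immediately: if $1 + w_j \sum 1/a_i > 0$, then for any $x \neq 0$ the right-hand side is strictly positive (since $a_i > 0$), so $q(x) > 0$ and $H^j$ is positive definite.

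For the ``only if'' direction, I would exhibit the equality case of Cauchy–Schwarz explicitly. Take $x_i = 1/a_i$, so that both $e^\top x$ and $\sum a_i x_i^2$ equal $\sum 1/a_i$; then a direct calculation gives $q(x) = \bigl(\sum 1/a_i\bigr)\bigl(1 + w_j \sum 1/a_i\bigr)$. Since $\sum 1/a_i > 0$, if $1 + w_j \sum 1/a_i \leq 0$, this vector $x \neq 0$ witnesses $q(x) \leq 0$, contradicting positive definiteness. Since the Cauchy–Schwarz step is the only nontrivial ingredient and its equality case is classical, I do not expect any real obstacle; the entire argument is short and self-contained.
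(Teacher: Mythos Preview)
Your proof is correct and takes a genuinely different route from the paper's. The paper argues via determinants: for the forward direction it applies the matrix determinant lemma to obtain $\det(H^j) = \bigl(1 + w_j \sum_{i} 1/a_i\bigr)\prod_i a_i$, and for the converse it verifies Sylvester's criterion by computing all leading principal minors with the same lemma. Your argument instead works directly with the quadratic form and uses Cauchy--Schwarz to bound it from below, with the equality case $x_i = 1/a_i$ supplying the witness for the ``only if'' direction. Your approach is more elementary and self-contained (no appeal to the matrix determinant lemma or Sylvester's criterion), and it has the conceptual bonus of identifying the minimizing direction of the Rayleigh quotient explicitly. The paper's determinant route, on the other hand, dovetails with its later use of the Sherman--Morrison formula for the inverse, so there is some thematic coherence in keeping the rank-one-update machinery throughout.
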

\begin{proof}
See Appendix~\ref{sec_app_lemma_posdef}.
\end{proof}

Lemma~\ref{lemma_posdef} implies that an instance of Problem~\ref{prob} is strictly convex if and only if $1+ w_j \sum_{i' \in \mathcal{N}_j} 1/a_{i'} > 0$ for each $j \in \mathcal{M}$. To stress the importance of this relation and for future reference, we state this relation as a property:
\begin{property}
For each $j \in \mathcal{M}$, it holds that $1+ w_j \sum_{i' \in \mathcal{N}_j} 1/a_{i'} > 0$.
\label{prop_01}
\end{property}
\noindent
For the remainder of this article, we consider only instances of Problem~\ref{prob} that satisfy Property~\ref{prop_01}. We conclude this subsection by observing that the parameters for Problem~\ref{prob_EV} satisfy this property:
\begin{equation*}
1+ w_j \sum_{i \in \mathcal{N}_j} \frac{1}{a_{i}} 
= 1+ (2W_1 - W_2) \sum_{p=1}^3 \frac{1}{3W_2}
= 1 + \frac{2W_1 - W_2}{W_2}
= \frac{2W_1}{W_2} > 0.
\end{equation*}

\subsection{Constraint elimination}
\label{sec_replace}
In Section~\ref{sec_convex}, we studied properties of the objective function of Problem~\ref{prob}. In contrast, we focus in this section on properties of the \emph{constraints} of Problem~\ref{prob}. For this, note that it is the addition of the lower and upper bound constraints~(\ref{eq_p_03}) that make the constraint set of Problem~\ref{prob} complex compared to the constraint set of the original resource allocation problem~RAP. Therefore, the goal of this section is to reduce this complexity. More precisely, in this section, we show that we can replace the lower and upper bound constraints~(\ref{eq_p_03}) by a set of single-variable constraints without changing the optimal solution to Problem~\ref{prob}. As these single-variable constraints can be integrated into the existing single-variable constraints~(\ref{eq_p_04}), we can focus without loss of generality on solving Problem~\ref{prob} without this constraint.

To derive this result, we first define for each $j \in \mathcal{M}$ and $S \in \mathbb{R}$ the following subproblem \textlabel{QRAP}{prob_QRA}$^j(S)$ of Problem~\ref{prob}:
\begin{align*}
\text{QRAP}^j(S) \ : \ \min_{x \in \mathbb{R}^{n_j}} \ & \sum_{i \in \mathcal{N}_j} \left( \frac{1}{2} a_i x_i^2 + b_i x_i \right) \\
\text{s.t. } & \sum_{i \in \mathcal{N}_j} x_i = S, \\
& l_i \leq x_i \leq u_i, \quad i \in \mathcal{N}_j.
\end{align*}
Lemma~\ref{lemma_el_new} states the main result of this subsection, namely that optimal solutions to \ref{prob_QRA}$^j(L_j)$ and \ref{prob_QRA}$^j(U_j)$ for $j \in \mathcal{M}$ are component-wise valid lower and upper bounds on optimal solutions to Problem~\ref{prob}. The proof of this lemma is inspired by the proof of Lemma~6.2.1 in \cite{Hochbaum1994} and can be found in Appendix~\ref{sec_app_lemma_el_new}.
\begin{lemma}
For a given $j \in \mathcal{M}$, let $\underline{x}^j := (\underline{x}_i)_{i \in \mathcal{N}}$ and $\bar{x}^j := (\bar{x}_i)_{i \in \mathcal{N}}$ be optimal solutions to \ref{prob_QRA}$^j(L_j)$ and \ref{prob_QRA}$^j(U_j)$ respectively. Then there exists an optimal solution $x^* := (x^*_i)_{i \in \mathcal{N}}$ to Problem~\ref{prob} that satisfies $\underline{x}_i \leq x^*_i \leq \bar{x}_i$ for each $i \in \mathcal{N}_j$.
\label{lemma_el_new}
\end{lemma}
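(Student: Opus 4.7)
The plan is to prove the claim by a two-stage argument: first show that the $\mathcal{N}_j$-restriction of any optimal $x^*$ of Problem~\ref{prob} must itself solve a QRAP$^j(S)$ subproblem, and then appeal to monotonicity of such subproblem solutions with respect to the right-hand side~$S$ to sandwich it between $\underline{x}^j$ and $\bar{x}^j$.

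First, let $x^*$ be any optimal solution to Problem~\ref{prob}. Set $S^* := \sum_{i \in \mathcal{N}_j} x^*_i$; by feasibility, $L_j \leq S^* \leq U_j$. I would argue that $(x^*_i)_{i \in \mathcal{N}_j}$ is optimal for \ref{prob_QRA}$^j(S^*)$. The argument is a standard exchange: suppose instead that $y = (y_i)_{i \in \mathcal{N}_j}$ achieves a strictly smaller value of $\sum_{i \in \mathcal{N}_j} (\tfrac{1}{2}a_i x_i^2 + b_i x_i)$ while satisfying $\sum y_i = S^*$ and the box constraints. Substituting $y$ for $(x^*_i)_{i \in \mathcal{N}_j}$ in $x^*$ keeps constraints (\ref{eq_p_02})--(\ref{eq_p_04}) intact, leaves the nonseparable term $\tfrac{1}{2}w_j(\sum_{i \in \mathcal{N}_j} x_i)^2$ unchanged (it depends only on $S^*$) and also leaves all other summands of~(\ref{eq_p_01}) unchanged, so the resulting vector is feasible and strictly better than $x^*$, contradicting optimality. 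Since $a_i > 0$ for all $i$, \ref{prob_QRA}$^j(S^*)$ is strictly convex and thus has the unique optimum $(x^*_i)_{i \in \mathcal{N}_j}$.

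Second, I would establish the monotonicity statement: if $S_1 \leq S_2$ with both \ref{prob_QRA}$^j(S_1)$ and \ref{prob_QRA}$^j(S_2)$ feasible, then their (unique) optimal solutions $x(S_1)$ and $x(S_2)$ satisfy $x_i(S_1) \leq x_i(S_2)$ for every $i \in \mathcal{N}_j$. The cleanest route is via KKT: the optimum of \ref{prob_QRA}$^j(S)$ has the closed form $x_i(\lambda) = \operatorname{med}\bigl(l_i,(\lambda-b_i)/a_i,u_i\bigr)$ for the unique multiplier $\lambda = \lambda(S)$ that makes $\sum_i x_i(\lambda) = S$. Each $x_i(\lambda)$ is nondecreasing in $\lambda$ and their sum is continuous and nondecreasing, so $S_1 \leq S_2$ forces $\lambda(S_1) \leq \lambda(S_2)$ and hence $x_i(S_1) \leq x_i(S_2)$ for all~$i$. (Alternatively, an exchange/pairing argument between $x(S_1)$ and $x(S_2)$ on coordinates where monotonicity fails, using strict convexity of $\tfrac12 a_i x_i^2 + b_i x_i$, gives the same conclusion.) Applying this with $S_1 = L_j$, $S_2 = S^*$ gives $\underline{x}_i \leq x^*_i$, and with $S_1 = S^*$, $S_2 = U_j$ gives $x^*_i \leq \bar{x}_i$, which is the desired sandwich.

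The main obstacle is the monotonicity step: everything else is bookkeeping around the exchange argument. Writing the KKT-based monotonicity proof crisply, including handling ties in~$\lambda$ so as to select a genuinely monotone selection $S \mapsto x(S)$, is the part that deserves the most care; the feasibility of \ref{prob_QRA}$^j(L_j)$ and \ref{prob_QRA}$^j(U_j)$ themselves follows from feasibility of Problem~\ref{prob}, since the projection $(x^*_i)_{i \in \mathcal{N}_j}$ combined with a suitable rescaling toward the lower (resp. upper) bound yields feasible points and the box $[l_i,u_i]$ must admit values summing to $L_j$ and $U_j$ by the existence of a feasible~$x^*$ with $\sum_{i \in \mathcal{N}_j} x^*_i \in [L_j,U_j]$.
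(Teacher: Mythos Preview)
Your argument is correct and takes a genuinely different route from the paper. The paper proves the bound $\underline{x}_i \leq x^*_i$ by a direct perturbation argument: assuming some optimal $x^*$ violates the bound at index~$i$, it locates a companion index~$k$ with $x^*_k > \underline{x}_k$, shifts mass $\epsilon$ from~$k$ to~$i$ in both $x^*$ and $\underline{x}^j$, and uses optimality of each to produce a pair of inequalities that sum to a contradiction (showing the shifted $x'$ is also optimal but closer to satisfying the bounds). Your approach instead factors the problem: you first observe that the $\mathcal{N}_j$-block of any optimal~$x^*$ must solve \ref{prob_QRA}$^j(S^*)$ for $S^* = \sum_{i \in \mathcal{N}_j} x^*_i$ (since the nonseparable term depends only on this sum), and then invoke the standard monotonicity of the unique \ref{prob_QRA}$^j(S)$ optimizer in~$S$ to sandwich it between the solutions at $S=L_j$ and $S=U_j$.

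Your route is cleaner and more conceptual: it isolates the only nontrivial ingredient (monotonicity of the separable subproblem in its resource level) and reuses it, whereas the paper essentially re-derives that monotonicity inside a bespoke exchange argument. The paper's proof, on the other hand, is fully self-contained and also transfers verbatim to the integer version (as they note in Theorem~\ref{th_int}, choosing $\epsilon=1$), while your KKT-based monotonicity step would need a separate discrete-convexity argument there. Your caveat about ties in~$\lambda$ is harmless here because strict convexity of each $\tfrac12 a_i x_i^2 + b_i x_i$ makes the optimizer of \ref{prob_QRA}$^j(S)$ unique even when the multiplier is not, so $S \mapsto x(S)$ is a well-defined monotone map.
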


Lemma~\ref{lemma_el_new} implies that adding the inequalities $\underline{x}_i \leq x_i \leq \bar{x}_i$, $i \in \mathcal{N}$ to the formulation of Problem~\ref{prob} does not cut off the optimal solution to the problem. Moreover, these inequalities imply the generalized bound constraints~(\ref{eq_p_03}) since we have for each $j \in \mathcal{M}$ that $\sum_{i \in \mathcal{N}_j} \underline{x}_{i} = L_j$ and $\sum_{i \in \mathcal{N}_j} \bar{x}_{i} = U_j$ by definition of $\underline{x}$ and $\bar{x}$. This means that Problem~\ref{prob} has the same optimal solution as the following problem:
\begin{align*}
\min_{x \in \mathbb{R}^n} \ & \sum_{j=1}^m \frac{1}{2} w_j \left( \sum_{i \in \mathcal{N}_j}  x_i \right)^2 + \sum_{i=1}^n \left(\frac{1}{2}a_i x_i^2 + b_i x_i \right)\\
\text{s.t. } & \sum_{i=1}^n  x_i = R \\
& \underline{x}_i\leq x_i \leq \bar{x}_i, \quad i \in \mathcal{N}.
\end{align*}
To compute the new variable bounds $\underline{x}_i$ and $\bar{x}_i$, we solve the $2m$ subproblems \ref{prob_QRA}$^j(L_j)$ and \ref{prob_QRA}$^j(U_j)$. Since each subproblem is a simple resource allocation problem, this can be done in $O(n)$ time using, e.g., the algorithms in \cite{Kiwiel2008a}. Thus, in the remainder of this article and without loss of generality, we focus on solving Problem~\ref{prob} without Constraint~(\ref{eq_p_03}).

\subsection{Monotonicity of optimal solutions}
\label{sec_KKT}
In this section, we analyze Problem~\ref{prob} (without Constraint~(\ref{eq_p_03})) and the structure of its optimal solutions. More precisely, we study the Karush-Kuhn-Tucker (KKT) conditions (see, e.g., \cite{Boyd2004}) for this problem and derive a property of solutions satisfying all but one of these conditions. This property is the crucial ingredient for our solution approach for Problem~\ref{prob} since it allows us to apply breakpoint search methods for separable convex resource allocation problems.

For convenience, we define $y_j := \sum_{i \in \mathcal{N}_j} x_i$ for $j \in \mathcal{M}$. The KKT-conditions for Problem~\ref{prob} can be written as follows:
\begin{subequations}
\label{KKT_new}%
\begin{flalign}
w_j y_j + a_i x_i + b_i +  \lambda + \mu_i  &= 0, &&j \in \mathcal{M}, \ i \in \mathcal{N}_j  &&\text{(stationarity)} \label{KKT_new_01} \\
\sum_{i=1}^n  x_i &= R && &&\text{(primal feasibility)}\label{KKT_new_02} \\
l_i \leq x_i &\leq u_i, && i \in \mathcal{N} &&\text{(primal feasibility)} \label{KKT_new_03} \\
\mu_i^+ (x_i- u_i) &= 0, && i \in \mathcal{N} &&\text{(complementary slackness)} \label{KKT_new_04}\\
\mu_i^- (x_i - l_i) & = 0, && i \in \mathcal{N} && \text{(complementary slackness)} \label{KKT_new_05} \\
\lambda, \mu_i, & \in \mathbb{R}, && i \in \mathcal{N} && \text{(dual feasibility)} . \label{KKT_new_06}
\end{flalign}
\end{subequations}
Here, $\mu^+_i$ and $\mu^-_i$ are the positive and negative part of $\mu_i$ respectively; i.e., $\mu^+_i= \max(0,\mu_i)$ and $\mu^-_i = \min(0,\mu_i)$. Assuming that Slater's condition holds \cite{Boyd2004}, the KKT-conditions are necessary and sufficient for optimality. Moreover, since Problem~\ref{prob} is \emph{strictly} convex, it has a unique optimal solution~$x^*$.

For a given~$\lambda$, let $(x(\lambda),\mu(\lambda)) \in \mathbb{R}^{2n}$ be the solution that satisfies all KKT-conditions~(\ref{KKT_new}) except~(\ref{KKT_new_02}). Moreover, define $y_j(\lambda):= \sum_{i \in \mathcal{N}_j}  x_i(\lambda)$ for $j \in \mathcal{M}$. It follows that $x(\lambda)$ is the optimal solution to Problem~\ref{prob} if and only if it satisfies KKT-condition~(\ref{KKT_new_02}), i.e., if $\sum_{i=1}^n  x_i(\lambda) = R$. The core of our solution approach is to find a value~$\lambda^*$ such that $\sum_{i=1}^n  x_i(\lambda^*) = R$ and reconstruct the corresponding solution~$x(\lambda^*)$ that, by definition, is optimal to Problem~\ref{prob}. We call~$\lambda^*$ an \emph{optimal (Lagrange) multiplier}.

The main result of this section is Lemma~\ref{lemma_mono}, which states that each $x_i(\lambda)$ can be seen as a non-increasing function of~$\lambda$. This result allows us to use approaches for separable convex resource allocation problems to find~$\lambda^*$. To prove Lemma~\ref{lemma_mono}, we first identify in Lemma~\ref{lemma_tech} a relation between $x_i(\lambda)$ and $\mu_i(\lambda)$.

\begin{lemma}
\label{lemma_tech} %
For any $\lambda_1,\lambda_2 \in \mathbb{R}$ and $i \in \mathcal{N}$, we have that $x_i(\lambda_1) < x_i(\lambda_2)$ implies $\mu_i (\lambda_1) \leq \mu_i (\lambda_2)$.
\end{lemma}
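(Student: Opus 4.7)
The plan is to prove the contrapositive-style statement by a direct case analysis using only the complementary slackness conditions (\ref{KKT_new_04})--(\ref{KKT_new_05}) and the primal feasibility bounds (\ref{KKT_new_03}). Notice that $(x(\lambda),\mu(\lambda))$ satisfies all KKT conditions except the resource equality, so for any $\lambda$ we have $l_i \leq x_i(\lambda) \leq u_i$ together with $\mu_i^+(\lambda)(x_i(\lambda)-u_i)=0$ and $\mu_i^-(\lambda)(x_i(\lambda)-l_i)=0$. The key observation is that these conditions already pin down the sign of $\mu_i(\lambda)$ in terms of which bound (if any) is active.

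More concretely, I would first establish the two simple sign rules: (i) if $x_i(\lambda)<u_i$, then $\mu_i^+(\lambda)=0$, so $\mu_i(\lambda)=\mu_i^-(\lambda)\le 0$; and (ii) if $x_i(\lambda)>l_i$, then $\mu_i^-(\lambda)=0$, so $\mu_i(\lambda)=\mu_i^+(\lambda)\ge 0$. Both rules are immediate from the definitions of $\mu^+$ and $\mu^-$ together with the complementary slackness equations.

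Then I would apply these rules to the hypothesis $x_i(\lambda_1)<x_i(\lambda_2)$. Combining the strict inequality with the bounds $l_i \leq x_i(\lambda_2) \leq u_i$ gives $x_i(\lambda_1) < u_i$, and combining with $l_i \leq x_i(\lambda_1) \leq u_i$ gives $x_i(\lambda_2) > l_i$. By rule (i) the first yields $\mu_i(\lambda_1)\le 0$, and by rule (ii) the second yields $\mu_i(\lambda_2)\ge 0$. Chaining these two inequalities gives $\mu_i(\lambda_1)\le 0\le\mu_i(\lambda_2)$, which is exactly the desired conclusion.

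I do not expect a serious obstacle here: the proof is essentially a sign bookkeeping argument and does not use the stationarity condition (\ref{KKT_new_01}) or anything specific to the quadratic or nonseparable structure. The only subtle point to make explicit is that the strict inequality $x_i(\lambda_1)<x_i(\lambda_2)$, rather than just $\le$, is what guarantees both $x_i(\lambda_1)<u_i$ and $x_i(\lambda_2)>l_i$ simultaneously, so that both sign rules can be invoked.
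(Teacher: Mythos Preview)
Your proposal is correct and follows essentially the same approach as the paper's own proof: use primal feasibility to deduce $x_i(\lambda_1)<u_i$ and $x_i(\lambda_2)>l_i$ from the strict inequality, then invoke complementary slackness (\ref{KKT_new_04})--(\ref{KKT_new_05}) to obtain $\mu_i(\lambda_1)\le 0$ and $\mu_i(\lambda_2)\ge 0$, and chain these together. The paper's argument is just a more compressed version of what you wrote.
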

\begin{proof}
Suppose $x_i(\lambda_1) < x_i(\lambda_2)$ for some~$i$. Then $l_i \leq x_i(\lambda_1) < x_i(\lambda_2) \leq u_i$, which implies $x_i(\lambda_1) < u_i$ and $x_i(\lambda_2) > l_i$. Together with KKT-conditions~(\ref{KKT_new_04}) and~(\ref{KKT_new_05}), it follows that $\mu_i(\lambda_1) \leq 0$ and $\mu_i(\lambda_2) \geq 0$ respectively, which implies that $\mu_i(\lambda_1) \leq \mu_i(\lambda_2)$.
\end{proof}

\begin{lemma}
For any $\lambda_1,\lambda_2 \in \mathbb{R}$ such that $\lambda_1 < \lambda_2$, it holds that $x_i(\lambda_1) \geq x_i(\lambda_2)$, $i \in \mathcal{N}$.
\label{lemma_mono}
\end{lemma}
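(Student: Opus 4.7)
The plan is to argue by contradiction: assume there exist $\lambda_1 < \lambda_2$ and some $k$ (with $k \in \mathcal{N}_j$ for the unique relevant~$j$) satisfying $x_k(\lambda_1) < x_k(\lambda_2)$, and derive a contradiction by combining the stationarity condition~(\ref{KKT_new_01}) with Lemma~\ref{lemma_tech} and Property~\ref{prop_01}. Throughout, I write $\Delta x_i := x_i(\lambda_2)-x_i(\lambda_1)$, $\Delta \mu_i := \mu_i(\lambda_2)-\mu_i(\lambda_1)$, $\Delta y_j := y_j(\lambda_2)-y_j(\lambda_1)$, and $\Delta \lambda := \lambda_2-\lambda_1 > 0$.

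The first step is to subtract~(\ref{KKT_new_01}) at $\lambda_1$ from~(\ref{KKT_new_01}) at $\lambda_2$, which yields, for every $i \in \mathcal{N}_j$,
\[ a_i \Delta x_i + \Delta \mu_i \;=\; -w_j \Delta y_j - \Delta \lambda \;=:\; D, \]
so the right-hand side~$D$ is a single scalar that does not depend on~$i$. Applying the identity at the distinguished index~$k$ and invoking Lemma~\ref{lemma_tech} to get $\Delta \mu_k \geq 0$, I obtain $D = a_k \Delta x_k + \Delta \mu_k > 0$.

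The second step uses this sign of~$D$ to rule out any $i \in \mathcal{N}_j$ with $\Delta x_i < 0$: for such an~$i$, Lemma~\ref{lemma_tech} (with the roles of $\lambda_1$ and $\lambda_2$ swapped) gives $\Delta \mu_i \leq 0$, whence $a_i \Delta x_i + \Delta \mu_i < 0$, contradicting $D > 0$. Hence $\Delta x_i \geq 0$ throughout $\mathcal{N}_j$, so $\Delta y_j \geq 0$. Moreover, indices with $\Delta x_i > 0$ satisfy $\Delta \mu_i \geq 0$ by Lemma~\ref{lemma_tech}, while indices with $\Delta x_i = 0$ satisfy $\Delta \mu_i = D > 0$ directly from the displayed identity; in either case $\Delta \mu_i \geq 0$, and therefore $\sum_{i \in \mathcal{N}_j} \Delta \mu_i / a_i \geq 0$.

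The final step is to divide the displayed identity by~$a_i$ and sum over $i \in \mathcal{N}_j$. Using $\sum_{i \in \mathcal{N}_j} \Delta x_i = \Delta y_j$, writing $s_j := \sum_{i \in \mathcal{N}_j} 1/a_i > 0$, and substituting $D = -w_j \Delta y_j - \Delta \lambda$, this rearranges to
\[ \bigl(1 + w_j s_j\bigr)\, \Delta y_j \;=\; -\Delta \lambda\, s_j \;-\; \sum_{i \in \mathcal{N}_j} \frac{\Delta \mu_i}{a_i}. \]
The left-hand side is non-negative because $\Delta y_j \geq 0$ and $1 + w_j s_j > 0$ by Property~\ref{prop_01}, whereas the right-hand side is strictly negative because $\Delta \lambda, s_j > 0$ and the final sum is non-negative by the previous step. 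This contradiction closes the argument. The main subtlety I anticipate is the uniform treatment of all signs of~$w_j$: a shortcut using only the stationarity identity at index~$k$ produces $w_j \Delta y_j < -\Delta \lambda$, which does not by itself determine the sign of $\Delta y_j$ when $w_j < 0$. Aggregating the identities via the $1/a_i$-weighted sum is precisely what lets Property~\ref{prop_01} enter and handle the positive, zero, and negative~$w_j$ cases in one stroke.
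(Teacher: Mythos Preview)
Your proof is correct and uses the same ingredients as the paper---the differenced stationarity condition~(\ref{KKT_new_01}), Lemma~\ref{lemma_tech}, and the $1/a_i$-weighted aggregation together with Property~\ref{prop_01}---but the logical organization differs. The paper argues in two phases: first it uses the aggregated identity and Property~\ref{prop_01} to show that if one coordinate strictly increases then some \emph{other} coordinate in the same block must weakly decrease, and then it uses the observation that $a_i x_i + b_i + \mu_i$ is constant across $i \in \mathcal{N}_j$ at any fixed $\lambda$ to show that having one coordinate go up and another go down is itself contradictory. You essentially invert these two steps: you first exploit the constancy of $D = a_i \Delta x_i + \Delta \mu_i$ across $i$ (the differenced form of the paper's second observation) to force \emph{all} $\Delta x_i \geq 0$ at once, and only then invoke the aggregated identity and Property~\ref{prop_01} for the final contradiction. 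Your route is a bit more streamlined---it avoids the separate ``find another index with the opposite sign'' step and handles all signs of $w_j$ uniformly without splitting into cases---while the paper's version makes the role of the pair $(i,k)$ more explicit. Either way, the substance is the same.
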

\begin{proof}
See Appendix~\ref{sec_app_lemma_mono}.
\end{proof}
Lemma~\ref{lemma_mono} implies that the values $x_i(\lambda)$ are monotonically decreasing in~$\lambda$. As a consequence, all possible values for the optimal multiplier~$\lambda^*$ form a closed interval $I \subset \mathbb{R}$, i.e., $\lambda \in I$ if and only if $\sum_{i=1}^n x_i(\lambda) = R$. It follows that $\lambda^*$ is non-unique if and only if for each index~$i \in \mathcal{N}$ one of the two bound constraints~\ref{eq_p_04} are tight for~$i$, i.e., either $x^*_i = l_i$ or $x^*_i =u_i$ for all $i \in \mathcal{N}$. Since this constitutes an extreme case and to simplify the discussion, we assume in the derivation of our approach without loss of generality that the optimal multiplier~$\lambda^*$ is unique.

The monotonicity of the values $x_i(\lambda)$ forms the main ingredient for our solution approach to Problem~\ref{prob}, which we derive in Section~\ref{sec_sol}. We conclude this section with two corollaries of Lemma~\ref{lemma_mono} that we require for the derivation of this approach. The first corollary states that not only the values $x_i(\lambda)$ are decreasing in $\lambda$, but also each value $y(\lambda)$. The second corollary is a stronger version of Lemma~\ref{lemma_tech} for the case where $i \in \mathcal{N}_j$ with $w_j < 0$.

\begin{corollary}
For any $\lambda_1,\lambda_2 \in \mathbb{R}$ such that $\lambda_1 < \lambda_2$, it holds that $y_j(\lambda_1) \geq y_j(\lambda_2)$, $j \in \mathcal{M}$.
\label{col_1}
\end{corollary}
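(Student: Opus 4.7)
The plan is to obtain Corollary~\ref{col_1} as an immediate consequence of Lemma~\ref{lemma_mono} by summing the component-wise monotonicity over the index set $\mathcal{N}_j$. Concretely, fix $j \in \mathcal{M}$ and $\lambda_1 < \lambda_2$. By Lemma~\ref{lemma_mono}, $x_i(\lambda_1) \geq x_i(\lambda_2)$ for every $i \in \mathcal{N}$, and in particular for every $i \in \mathcal{N}_j$. Summing these finitely many inequalities yields
\begin{equation*}
y_j(\lambda_1) = \sum_{i \in \mathcal{N}_j} x_i(\lambda_1) \geq \sum_{i \in \mathcal{N}_j} x_i(\lambda_2) = y_j(\lambda_2),
\end{equation*}
which is exactly the claim.

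There is no real obstacle here: the corollary is a one-line consequence of the already-established pointwise monotonicity, together with the definition $y_j(\lambda) := \sum_{i \in \mathcal{N}_j} x_i(\lambda)$ introduced just before the KKT system. The only thing to be slightly careful about is that one uses Lemma~\ref{lemma_mono} itself (monotonicity of each $x_i(\lambda)$), and not Lemma~\ref{lemma_tech}, since the statement concerns the sums $y_j$ directly. No case distinction on the sign of $w_j$ is needed, because Lemma~\ref{lemma_mono} is stated uniformly for all $i \in \mathcal{N}$ under Property~\ref{prop_01}.
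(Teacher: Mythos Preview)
Your proposal is correct and matches the paper's approach exactly: the paper's proof simply reads ``Follows directly from Lemma~\ref{lemma_mono},'' and your argument of summing the inequalities $x_i(\lambda_1) \geq x_i(\lambda_2)$ over $i \in \mathcal{N}_j$ is precisely the intended one-line justification.
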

\begin{proof}
Follows directly from Lemma~\ref{lemma_mono}.
\end{proof}

\begin{corollary}
If $w_j < 0$, then for any $\lambda_1, \lambda_2 \in \mathbb{R}$ such that $\lambda_1 < \lambda_2$, it holds that $\mu_i(\lambda_1) \geq \mu_i(\lambda_2)$ for $i \in \mathcal{N}_j$.
\label{col_2}
\end{corollary}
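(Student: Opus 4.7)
The plan is to split the proof into two cases based on whether $x_i(\lambda_1)$ equals $x_i(\lambda_2)$ or not, since by Lemma~\ref{lemma_mono} these are the only possibilities when $\lambda_1 < \lambda_2$. The hypothesis $w_j < 0$ will only be needed in the equality case; the strict inequality case already follows from the more general Lemma~\ref{lemma_tech}.

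If $x_i(\lambda_1) > x_i(\lambda_2)$, I would apply Lemma~\ref{lemma_tech} with the roles of $\lambda_1$ and $\lambda_2$ interchanged, which immediately yields $\mu_i(\lambda_1) \geq \mu_i(\lambda_2)$. Here the sign of $w_j$ plays no role.

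If $x_i(\lambda_1) = x_i(\lambda_2)$, I would subtract the two stationarity conditions~\eqref{KKT_new_01} corresponding to $\lambda_1$ and $\lambda_2$. Because $x_i$ cancels, this leaves
\begin{equation*}
\mu_i(\lambda_1) - \mu_i(\lambda_2) = -w_j\bigl(y_j(\lambda_1) - y_j(\lambda_2)\bigr) - (\lambda_1 - \lambda_2).
\end{equation*}
The second term is strictly positive since $\lambda_1 < \lambda_2$. For the first term, Corollary~\ref{col_1} gives $y_j(\lambda_1) \geq y_j(\lambda_2)$, and combined with $w_j < 0$, this makes the first term nonnegative. Hence $\mu_i(\lambda_1) \geq \mu_i(\lambda_2)$, completing this case.

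There is no real obstacle here: once the case split is in place, each case is essentially a one-line argument. The only subtlety worth flagging is that without the assumption $w_j < 0$, the equality case could fail, since then the term $-w_j(y_j(\lambda_1) - y_j(\lambda_2))$ would contribute with the wrong sign and could dominate $\lambda_2 - \lambda_1$; so the hypothesis is used in an essential way precisely where Lemma~\ref{lemma_tech} gives no information.
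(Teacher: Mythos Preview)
Your proof is correct and follows essentially the same approach as the paper: the same case split via Lemma~\ref{lemma_mono}, the use of Lemma~\ref{lemma_tech} in the strict case, and the subtraction of the stationarity conditions~\eqref{KKT_new_01} combined with Corollary~\ref{col_1} and $w_j<0$ in the equality case. The paper writes the equality case as a chain of inequalities rather than as an explicit difference, but the argument is the same.
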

\begin{proof}
By Lemma~\ref{lemma_mono}, we have $x_i(\lambda_1) \geq x_i(\lambda_2)$. If this is a strict inequality, i.e., if $x_i(\lambda_1) > x_i(\lambda_2)$, then it follows from Lemma~\ref{lemma_tech} that $\mu_i(\lambda_1) \geq \mu_i(\lambda_2)$. Otherwise, if $x_i(\lambda_1) = x_i(\lambda_2)$, KKT-condition~(\ref{KKT_new_01}) together with $w_j < 0$ and Corollary~\ref{col_1} implies
\begin{align*}
w_j y_j (\lambda_1) + a_i x_i(\lambda_1) + b_i + \mu_i (\lambda_1) &= -\lambda_1 
> -\lambda_2 \\
&= w_j y_j(\lambda_2) +a_i x_i(\lambda_2) + b_i + \mu_i (\lambda_2) \\
& \geq w_j y_j (\lambda_1) + a_i x_i(\lambda_1) + b_i + \mu_i(\lambda_2).
\end{align*}
It follows that $\mu_i (\lambda_1) > \mu_i(\lambda_2)$, proving the corollary.
\end{proof}

\section{Solution approach}
\label{sec_sol}

In this section, we present our approach to solve Problem~\ref{prob}. First, in Section~\ref{sec_outline}, we provide an outline of the approach using the analysis conducted in Section~\ref{sec_analysis}. Second, Section~\ref{sec_comput} focuses in detail on several computational aspects of the approach.

\subsection{Outline}
\label{sec_outline}

The monotonicity of $x_i(\lambda)$, proven in Lemma~\ref{lemma_mono}, has two important implications. First, for each $i \in \mathcal{N}$, there exist unique \emph{breakpoints} $\alpha_i < \beta_i$ such that
\begin{subequations}
\label{eq_bp}
\begin{align}
\lambda \leq \alpha_i &\Leftrightarrow x_i(\lambda) = u_i,  \label{eq_bp_01}\\
\alpha_i < \lambda < \beta_i &\Leftrightarrow l_i < x_i(\lambda) < u_i, \label{eq_bp_02} \\
\beta_i \leq \lambda & \Leftrightarrow x_i(\lambda) = l_i. \label{eq_bp_03}
\end{align}
\end{subequations}
For now, we assume that these breakpoints are known. In Section~\ref{sec_bp}, we discuss how they can be computed efficiently. The second implication of the monotonicity is that, given the optimal multiplier~$\lambda^*$, we have
\begin{subequations}
\label{eq_z}
\begin{align}
\lambda \leq \lambda^* & \Rightarrow \sum_{i=1}^n x_i(\lambda) \geq \sum_{i=1}^n x_i(\lambda^*) = R,  \label{eq_z_01} \\
\lambda \geq \lambda^* & \Rightarrow \sum_{i=1}^n x_i(\lambda) \leq \sum_{i=1}^n x_i(\lambda^*) = R. \label{eq_z_02}
\end{align}
\end{subequations}
These two implications are the base to determine the optimal multiplier~$\lambda^*$. For this, we define the set of all breakpoints by $\mathcal{B} := \lbrace \alpha_i \ | \ i \in \mathcal{N} \rbrace \cup \lbrace \beta_i \ | \ i \in \mathcal{N} \rbrace$. Equations~(\ref{eq_bp_01})-(\ref{eq_bp_03}) imply that $\min (\mathcal{B})  \leq \lambda^* \leq \max(\mathcal{B})$. This means that there exist two consecutive breakpoints $\gamma, \delta \in \mathcal{B}$ such that $\gamma \leq \lambda^* < \delta$. Figure~\ref{fig_example} illustrates the relation between $x(\lambda)$, $y(\lambda)$, the total resource~$R$, the breakpoints in $\mathcal{B}$, and the breakpoints $\gamma$, $\lambda^*$, and $\delta$.
\begin{figure}[ht!]
\caption{Illustrative example of the relation between $x(\lambda)$, $y(\lambda) = \sum_{i=1}^3 x_i(\lambda)$, $R$, and the breakpoints $\alpha_i$, $\beta_i$, $\gamma$, $\lambda^*$, and $\delta$. In this example, $\gamma = \alpha_2$, $\delta = \alpha_3$, and $\lambda^*$ is represented by the black square.}\label{fig_example}
{\includegraphics{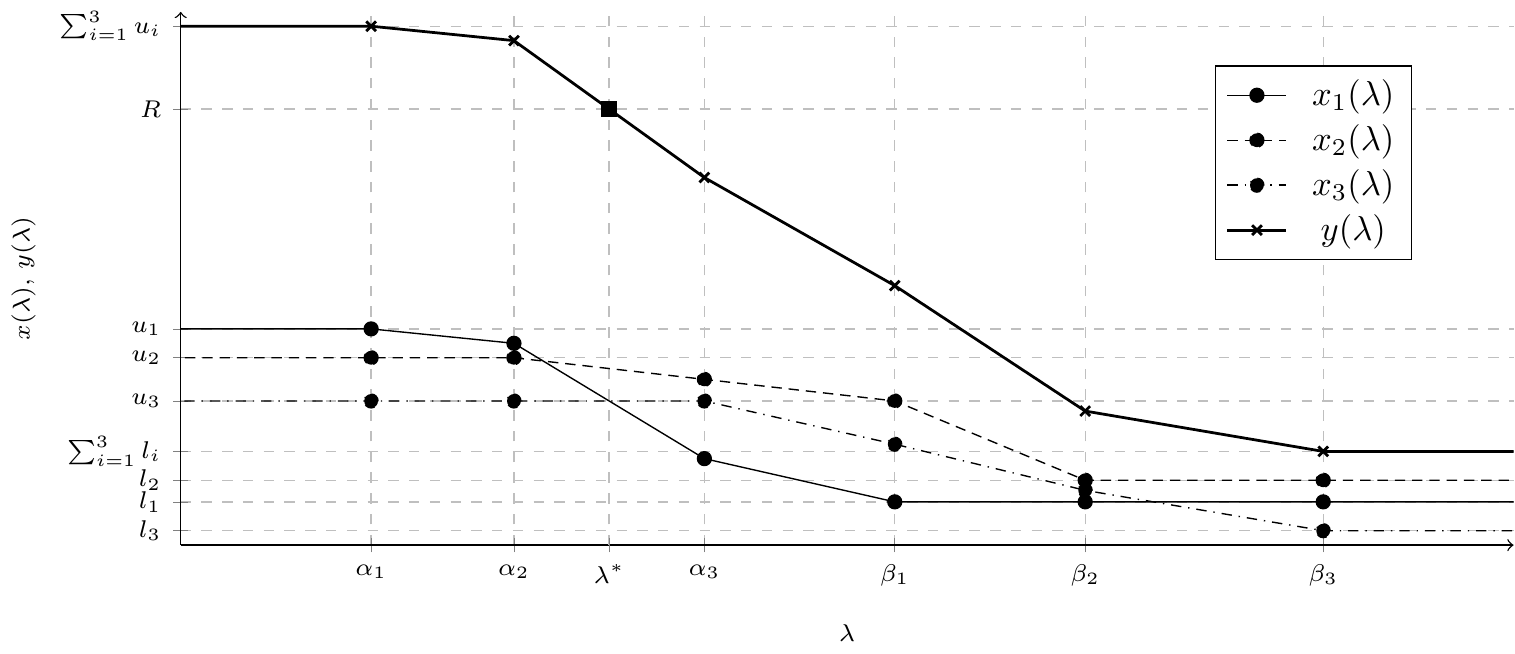}}
\end{figure}

The key of our approach is that once we have found $\gamma$ and $\delta$, we can easily compute $\lambda^*$ and the resulting optimal solution $x(\lambda^*)$. To see this, note that by Equations~(\ref{eq_bp_01})-(\ref{eq_bp_03}) and by definition of $\gamma$, we have for all $i \in \mathcal{N}$ that
\begin{align*}
x_i(\delta) = u_i & \Leftrightarrow x_i(\lambda^*) = u_i, \\
l_i < x_i(\gamma) < u_i & \Leftrightarrow l_i < x_i(\lambda^*) < u_i,  \\
x_i(\gamma) = l_i & \Leftrightarrow x_i(\lambda^*) = l_i. 
\end{align*}
As a consequence, we know that $x_i(\lambda^*) = u_i$ if $\alpha_i \geq \delta$ and $x_i(\lambda^*) = l_i$ if $\beta_i \leq \gamma$. Thus, we may eliminate these variables from the problem. As a consequence for the remaining problem, the box constraints~(\ref{eq_p_04}) become redundant and Problem~\ref{prob} reduces to a quadratic optimization problem with a single equality constraint. We show in Section~\ref{sec_opt_mult} that this specific structure allows us to derive an explicit expression for $\lambda^*$ that can be determined in $O(n)$ time.

To find the breakpoint $\gamma$, we may either consider all breakpoints monotonically in the set~$\mathcal{B}$ of breakpoints or apply a binary search to $\mathcal{B}$. This is because the variable sum $y_i(\lambda) = \sum_{i=1}^n x_i(\lambda)$ induces an order on the breakpoints by Corollary~\ref{col_1}. Moreover, we know by Equations~(\ref{eq_z_01}) and~(\ref{eq_z_02}) that $\gamma$ is the largest breakpoint $\lambda$ in the set $\mathcal{B}$ such that $\sum_{i=1}^n x_i(\lambda) \geq R$. Note, that each of the two approaches to find~$\gamma$ leads to a different algorithm.

The kernel of both approaches is an efficient method to evaluate $x(\lambda)$ for any given~$\lambda \in \mathbb{R}$ as this is the base for computing the breakpoint set $\mathcal{B}$, and to compute $\lambda^*$ from $\gamma$. We focus on each of these three aspects in the next subsection.

\subsection{Computational aspects}
\label{sec_comput}

In the approach outlined in Section~\ref{sec_outline}, there are three quantities whose computation is not straight-forward. These quantities are the solution $x(\lambda)$ for a given $\lambda \in \mathbb{R}$, the set of breakpoints $\mathcal{B}$, and the optimal multiplier $\lambda^*$. In the following subsections, we discuss how these quantities can be computed efficiently.

\subsubsection{Computing $x(\lambda)$ and $y(\lambda)$ for a given $\lambda$}
\label{sec_comp}

To compute $x(\lambda)$ for a given $\lambda$, we need to find a feasible solution to the KKT-conditions~(\ref{KKT_new}) without~(\ref{KKT_new_02}). We call these KKT-conditions the \emph{primary} KKT-conditions. Instead of trying to derive $x(\lambda)$ directly from the primary KKT-conditions, we first determine which variables in $x(\lambda)$ are equal to one of their bounds and which ones are strictly in between their bounds. To this end, for each $j \in \mathcal{M}$, we first partition the set of variables $\mathcal{N}_j$ into the following sets:
\begin{align*}
\mathcal{N}_j^{\text{lower}} (\lambda) &:= \lbrace i \in \mathcal{N}_j \ | \ x_i (\lambda) = l_i \rbrace, \\
\mathcal{N}_j^{\text{upper}} (\lambda)&:= \lbrace i \in \mathcal{N}_j \ | \ x_i (\lambda) = u_i \rbrace, \\
\mathcal{N}_j^{\text{free}} (\lambda)&:= \lbrace i \in \mathcal{N}_j \ | \ l_i < x_i (\lambda) < u_i \rbrace.
\end{align*}
Observe that Equations~(\ref{eq_bp}) imply the following equivalent definition of these sets:
\begin{subequations}
 \label{eq_char_bp}
\begin{align}
\mathcal{N}_j^\text{lower} (\lambda) &= \lbrace i \in \mathcal{N}_j \ | \ \beta_i \leq \lambda \rbrace, \\
\mathcal{N}_j^\text{upper} (\lambda) &= \lbrace i \in \mathcal{N}_j \ | \ \alpha_i \geq \lambda \rbrace, \\
\mathcal{N}_j (\lambda) &= \lbrace i \in \mathcal{N}_j \ | \ \alpha_i < \lambda < \beta_i \rbrace.
\end{align}
\end{subequations}
Thus, given the set $\mathcal{B}$ of breakpoints, we can easily determine these sets in $O(n)$ time by checking whether $\beta_i \leq \lambda$, $\alpha_i \geq \lambda$, or $\alpha_i < \lambda < \beta_i$.

Given the partition $(\mathcal{N}_j^\text{lower} (\lambda),\mathcal{N}_j^\text{upper} (\lambda),\mathcal{N}_j^\text{free} (\lambda))$, we can compute $x(\lambda)$ as follows. As $x_i(\lambda) = l_i$ for all $i \in \mathcal{N}_j^\text{lower} (\lambda)$ and $x_i(\lambda) = u_i$ for all $\mathcal{N}_j^\text{upper} (\lambda)$, it remains to compute $x_i(\lambda)$ for all $i \in \mathcal{N}_j^\text{free} (\lambda)$. Let
\begin{align*}
y^{\text{free}}_j(\lambda) &:= \sum_{i \in \mathcal{N}_j^{\text{free}}(\lambda)} x_i(\lambda), \\
y^{\text{fixed}}_j(\lambda) &:= \sum_{i \in \mathcal{N}_j \backslash \mathcal{N}_j^{\text{free}}(\lambda)} x_i(\lambda)
= \sum_{i \in \mathcal{N}_j^{\text{lower}} (\lambda)} l_i + \sum_{i \in \mathcal{N}_j^{\text{upper}} (\lambda)} u_i.
\end{align*}
By KKT-conditions~(\ref{KKT_new_04})-(\ref{KKT_new_05}), we have $\mu_i(\lambda)= 0$ for each $i \in \mathcal{N}_j^{\text{free}} (\lambda) $. As a consequence, after substituting $y^{\text{fixed}}_j (\lambda)$ and $x_i(\lambda)$ for $i \in \mathcal{N}_j^{\text{lower}} (\lambda) \cup \mathcal{N}_j^{\text{upper}} (\lambda) $ into the primary KKT-conditions~(\ref{KKT_new_01}) and~(\ref{KKT_new_03})-(\ref{KKT_new_05}), the only non-redundant primary KKT-conditions are~(\ref{KKT_new_01}) and~(\ref{KKT_new_06}) for $i \in \mathcal{N}_j^{\text{free}}(\lambda)$:
\begin{equation*}
w_j y_j^{\text{free}}(\lambda) + w_j y_j^{\text{fixed}}(\lambda) + a_i x_i(\lambda) + b_i + \lambda = 0, \quad j \in \mathcal{M}, \ i \in \mathcal{N}_j^{\text{free}}(\lambda), \ \lambda \in \mathbb{R}.
\end{equation*}
We show that the solution to these equations in terms of $x_i(\lambda)$ can be given in closed form. For convenience, we define the following quantities:
\begin{equation*}
A_j(\lambda) := \sum_{\ell \in \mathcal{N}_j^{\text{free}}(\lambda)} \frac{1}{a_{\ell}}, \quad
B_j(\lambda) := \sum_{\ell \in \mathcal{N}_j^{\text{free}}(\lambda)} \frac{b_{\ell}}{a_{\ell}}.
\end{equation*}
Using, e.g., the Sherwood-Morrison formula (see, e.g., \cite{Bartlett1951}), one can deduce and verify that the solution to the non-redundant primary KKT-conditions is
\begin{equation}
x_i(\lambda) = \frac{1}{a_i} \frac{-w_j y_j^{\text{fixed}}(\lambda) - \lambda}{1 + w_j A_j(\lambda)}  - \frac{b_i}{a_i} +\frac{w_j}{a_i} \frac{B_j(\lambda)}{1+w_j A_j(\lambda)},
\quad i \in \mathcal{N}_j^{\text{free}} (\lambda).
\label{eq_x}
\end{equation}
It follows that
\begin{align}
y_j(\lambda) &:= y_j^{\text{free}}(\lambda) + y_j^{\text{fixed}}(\lambda) 
 = -\frac{a_i x_i(\lambda) + b_i + \lambda}{w_j} \nonumber \\
&= \frac{ y_j^{\text{fixed}}(\lambda) + \frac{\lambda}{w_j}}{1 + w_j A_j(\lambda)}
-\frac{B_j(\lambda)}{1+w_j A_j(\lambda)}
- \frac{\lambda}{w_j} 
= \frac{ y_j^{\text{fixed}}(\lambda) - B_j(\lambda)}{1 + w_j A_j(\lambda)}
-\frac{A_j(\lambda)}{1+w_j A_j(\lambda)} \lambda. \label{eq_z_sum}
\end{align}
Note that for each $j \in \mathcal{M}$, $y_j(\lambda)$ can be computed in $O(n_j)$ time given the breakpoint set $\mathcal{B}$. As a consequence, computing the sum $\sum_{j=1}^m y_j(\lambda) (= \sum_{i=1}^n x_i(\lambda))$ takes $O(n)$ time.

\subsubsection{Computing the breakpoints}
\label{sec_bp}

To derive our approach for computing the breakpoints, we exploit two important properties of these breakpoints that we state and prove in Lemmas~\ref{lemma_mu} and~\ref{lemma_xk}. The first property is concerned with the value~$\mu$ introduced in the KKT-conditions~(\ref{KKT_new}). Recall from KKT-conditions~(\ref{KKT_new_04}) and~(\ref{KKT_new_05}) that, for a given $\lambda \in \mathbb{R}$ and $i \in \mathcal{N}$, we have that $\mu_i(\lambda) \geq 0$ if $x_i(\lambda) = u_i$, $\mu_i(\lambda) = 0$ if $l_i < x_i(\lambda) < u_i$, and $\mu_i(\lambda) \leq 0$ if $x_i(\lambda) = l_i$. It follows from Equation~(\ref{eq_bp}) that $\mu_i(\lambda) \geq 0$ if $\lambda \leq \alpha_i$, $\mu_i(\lambda) = 0$ if $\alpha_i < \lambda < \beta_i$, and $\mu_i(\lambda) \leq 0$ if $\beta_i \leq \lambda$. Lemma~\ref{lemma_mu} shows that $\mu_i(\alpha)$ and $\mu_i(\beta_i)$ are equal to the value of $\mu_i(\lambda)$ for $\alpha_i < \lambda < \beta_i$, i.e., are equal to zero.
\begin{lemma}
For all $i \in \mathcal{N}$, we have $\mu_i(\alpha_i) = \mu_i (\beta_i) = 0$.
\label{lemma_mu}
\end{lemma}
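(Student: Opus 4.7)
The plan is to extend the obvious identity $\mu_i(\lambda) = 0$ on the open interval $(\alpha_i, \beta_i)$ to the boundary points by a one-sided continuity argument, using the stationarity condition to convert continuity of $x$ and $y$ into continuity of $\mu$.

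First I would handle the interior of the interval: for any $\lambda \in (\alpha_i, \beta_i)$, Equation~(\ref{eq_bp_02}) gives $l_i < x_i(\lambda) < u_i$, so the complementary slackness conditions~(\ref{KKT_new_04}) and~(\ref{KKT_new_05}) force $\mu_i^+(\lambda) = \mu_i^-(\lambda) = 0$, and therefore $\mu_i(\lambda) = 0$ throughout $(\alpha_i, \beta_i)$. Next, I would rewrite stationarity condition~(\ref{KKT_new_01}) as
$$\mu_i(\lambda) \;=\; -\,w_j\, y_j(\lambda) \;-\; a_i\, x_i(\lambda) \;-\; b_i \;-\; \lambda,$$
and observe that, given continuity of $\lambda \mapsto x(\lambda)$ and hence of $\lambda \mapsto y_j(\lambda)$, the function $\mu_i$ is continuous in $\lambda$. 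Taking $\lambda \downarrow \alpha_i$ and $\lambda \uparrow \beta_i$ in the identity $\mu_i(\lambda) = 0$ then yields $\mu_i(\alpha_i) = \mu_i(\beta_i) = 0$.

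The key remaining step is the continuity of $x(\lambda)$. One clean route is to note that $x(\lambda)$ is the unique minimizer of the strictly convex box-constrained problem
$$\min_{l \le x \le u} \; \sum_{j=1}^m \tfrac{1}{2} w_j\Bigl(\sum_{i \in \mathcal{N}_j} x_i\Bigr)^2 + \sum_{i=1}^n \Bigl(\tfrac{1}{2} a_i x_i^2 + (b_i + \lambda) x_i\Bigr),$$
whose objective is jointly continuous in $(x,\lambda)$ and whose feasible region is a fixed nonempty compact box; Berge's maximum theorem (applied to the single-valued argmin) then gives continuity of $\lambda \mapsto x(\lambda)$.

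The main obstacle, and the only place where care is needed, is this continuity step. An alternative that stays within the paper's algebraic idiom and avoids invoking Berge's theorem is to use the explicit formula~(\ref{eq_x}): on a sufficiently small right-neighborhood of $\alpha_i$ the partition $(\mathcal{N}_j^{\text{lower}}, \mathcal{N}_j^{\text{upper}}, \mathcal{N}_j^{\text{free}})$ is constant, so $x_i(\lambda)$ and $y_j(\lambda)$ are rational functions of $\lambda$ whose denominator $1 + w_j A_j(\lambda)$ stays positive by Property~\ref{prop_01}; they are therefore one-sided continuous, and by the characterization~(\ref{eq_bp}) the right-limit must equal $u_i$. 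An analogous argument at $\beta_i$ gives a left-limit of $l_i$. Either route reduces the rest of the proof to a single substitution into the stationarity identity.
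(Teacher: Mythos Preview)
Your Berge-theorem route is correct and gives a clean proof: $x(\lambda)$ is indeed the unique minimizer of the box-constrained problem you wrote down (the primary KKT conditions are precisely its optimality conditions), the objective is jointly continuous in $(x,\lambda)$ over a fixed nonempty compact box, and Berge then yields continuity of $x(\cdot)$ and hence of $\mu_i(\cdot)$ via stationarity; the interior identity $\mu_i\equiv 0$ on $(\alpha_i,\beta_i)$ therefore extends to the endpoints. The paper argues quite differently and never leaves the monotonicity machinery already built. It compares the stationarity equations at $\alpha_i$ and $\alpha_i+\epsilon$ and splits on the sign of $w_j$: for $w_j\ge 0$, Lemma~\ref{lemma_mono} and Corollary~\ref{col_1} make both $a_i x_i$ and $w_j y_j$ non-increasing, and subtracting yields $\mu_i(\alpha_i)\le\epsilon$ directly; for $w_j<0$ that subtraction goes the wrong way, so the paper divides each stationarity equation by $a_\ell$, sums over $\mathcal{N}_j$, and uses Property~\ref{prop_01} together with Corollary~\ref{col_2} to obtain $\mu_i(\alpha_i)\le\epsilon\sum_{\ell\in\mathcal{N}_j}1/a_\ell$. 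Your approach is shorter and avoids the case split; the paper's is fully self-contained, invoking no external theorem and reusing exactly the inequalities that drive the rest of Section~\ref{sec_analysis}.

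One caveat on your alternative route: the claim that ``by the characterization~(\ref{eq_bp}) the right-limit must equal $u_i$'' is not justified by~(\ref{eq_bp}) alone, which only tells you $x_i(\alpha_i)=u_i$ and $x_i(\lambda)<u_i$ on the open interval---nothing about the limit from the right. To close that step you would still need to argue that the right-limit of the rational-function formula satisfies all primary KKT conditions at $\alpha_i$ and then invoke uniqueness of the strictly convex minimizer (whence $\mu$ is also determined via stationarity); but that is essentially the Berge argument again, so the alternative does not really bypass it.
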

\begin{proof}
See Appendix~\ref{sec_app_lemma_mu}.
\end{proof}

Next, Lemma~\ref{lemma_xk} states that, for each $j \in \mathcal{M}$, we can use the values given by $\mathcal{P}_j := \lbrace a_i l_i + b_i \ | \ i \in \mathcal{N}_j \rbrace$ and $\mathcal{Q}_j := \lbrace a_i u_i + b_i \ | \ i \in \mathcal{N}_j \rbrace$ to determine the order of the corresponding breakpoints.
\begin{lemma}
\label{lemma_xk} %
For $j \in \mathcal{M}$ and $i,k \in \mathcal{N}_j$ , we have:
\begin{itemize}
\item
$a_i u_i + b_i > a_k u_k + b_k$ implies $\alpha_i \leq \alpha_k$, and;
\item
$a_i l_i + b_i > a_k l_k + b_k$ implies $\beta_i \leq \beta_k$.
\end{itemize}
\end{lemma}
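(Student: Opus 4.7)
My plan is to prove both bullet points by contradiction, exploiting Lemma~\ref{lemma_mu} to evaluate the primary stationarity condition~(\ref{KKT_new_01}) at the smaller of the two candidate breakpoints. The two statements are mirror images of each other, so I would prove the first in detail and handle the second by an analogous argument with $l$'s in place of $u$'s and the sign of $\mu_i$ reversed.

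For the first bullet, I would suppose $a_i u_i + b_i > a_k u_k + b_k$ yet $\alpha_i > \alpha_k$ and plug $\lambda = \alpha_k$ into~(\ref{KKT_new_01}) for both $i$ and $k$. Two facts make this evaluation clean. First, by the breakpoint characterization~(\ref{eq_bp_01}), $x_k(\alpha_k) = u_k$ and moreover $x_i(\alpha_k) = u_i$ since $\alpha_k < \alpha_i$. Second, by Lemma~\ref{lemma_mu}, $\mu_k(\alpha_k) = 0$, while KKT condition~(\ref{KKT_new_05}) together with $x_i(\alpha_k) = u_i > l_i$ gives $\mu_i(\alpha_k) \geq 0$. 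The shared term $w_j y_j(\alpha_k) + \alpha_k$ then cancels between the two stationarity equations, leaving $a_k u_k + b_k = a_i u_i + b_i + \mu_i(\alpha_k) \geq a_i u_i + b_i$, which contradicts the hypothesis.

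For the second bullet, I would suppose $a_i l_i + b_i > a_k l_k + b_k$ but $\beta_i > \beta_k$ and evaluate~(\ref{KKT_new_01}) at $\lambda = \beta_k$. Analogously, Lemma~\ref{lemma_mu} gives $\mu_k(\beta_k) = 0$ with $x_k(\beta_k) = l_k$, and $\beta_k < \beta_i$ combined with~(\ref{eq_bp_02})--(\ref{eq_bp_03}) yields $x_i(\beta_k) > l_i$, hence $\mu_i(\beta_k) \geq 0$ by~(\ref{KKT_new_05}). Eliminating $w_j y_j(\beta_k) + \beta_k$ between the two stationarity equations produces $a_i x_i(\beta_k) + b_i + \mu_i(\beta_k) = a_k l_k + b_k$, and then the extra step $a_i x_i(\beta_k) + b_i > a_i l_i + b_i$ (which uses $a_i > 0$) chains together to give $a_i l_i + b_i < a_k l_k + b_k$, contradicting the hypothesis.

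The main obstacle I anticipate is nothing computational but bookkeeping: being precise about the sign conventions of the multipliers, namely that a strictly interior variable forces $\mu_i = 0$ while a variable at its upper (respectively lower) bound forces $\mu_i \geq 0$ (respectively $\mu_i \leq 0$), and using these signs with the correct directions in the two bullets. A minor degenerate case is when $l_i = u_i$, which can be dispensed with by assuming $l_i < u_i$ throughout as is standard for the problem.
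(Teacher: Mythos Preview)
Your proposal is correct and follows essentially the same line as the paper's proof: evaluate the stationarity condition~(\ref{KKT_new_01}) for both indices at the relevant breakpoint, cancel the common term $w_j y_j + \lambda$, and use Lemma~\ref{lemma_mu} together with the sign of $\mu_i$ to reach a contradiction. The only cosmetic difference is that the paper phrases the first bullet as ``show $x_i(\alpha_k) < u_i$ and then invoke monotonicity (Lemma~\ref{lemma_mono})'' rather than directly assuming $\alpha_i > \alpha_k$, but the underlying computation is identical.
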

\begin{proof}
See Appendix~\ref{sec_app_lemma_xk}.
\end{proof}

Lemmas~\ref{lemma_mu} and~\ref{lemma_xk} give rise to the following strategy to compute the breakpoints. From the KKT-condition~(\ref{KKT_new_01}) for $i \in \mathcal{N}_j$, $j \in \mathcal{M}$, we have for the breakpoints $\alpha_i$ and $\beta_i$ that
\begin{subequations}
\label{eq_BP_init}
\begin{align}
\alpha_i &= -w_j y_j(\alpha_i) - a_i x_i (\alpha_i) - b_i - \mu_i(\alpha_i)
= -w_j y_j(\alpha_i) - a_i u_i - b_i - \mu_i(\alpha_i), \label{eq_BP_init_01} \\
\beta_i &= -w_j y_j(\beta_i) - a_i x_i(\beta_i) - b_i - \mu_i(\beta_i)
= -w_j y_j(\beta_i) - a_i l_i - b_i - \mu_i(\beta_i). \label{eq_BP_init_02}
\end{align}
\end{subequations}
Note, that we can obtain the following expression for a given breakpoint $\alpha_i$ by applying Lemma~\ref{lemma_mu} and plugging Equation~(\ref{eq_z_sum}) into Equation~(\ref{eq_BP_init_01}):
\begin{equation}
\label{eq_BP_alpha_expr}
\alpha_i = -w_j y_j(\alpha_i) - a_i u_i - b_i
= \frac{-w_j y_j^{\text{fixed}}(\alpha_i) + w_j B_j(\alpha_i)}{1+ w_j A_j(\alpha_i)} + \frac{w_j A_j(\alpha_i)}{1+w_j A_j(\alpha_i)} \alpha_i - a_i u_i - b_i
\end{equation}
This is equivalent to
\begin{equation}
\label{eq_BP_beta_expr}
\alpha_i = w_j (B_j(\alpha_i) - y_j^{\text{fixed}}(\alpha_i)) - (a_i u_i + b_i)(1+ w_j A_j(\alpha_i)).
\end{equation}
Analogously, we can deduce the following expression for $\beta_i$ by applying Lemma~\ref{lemma_mu} and plugging Equation~(\ref{eq_z_sum}) into Equation~(\ref{eq_BP_init_02}):
\begin{equation*}
\beta_i = w_j (B_j(\beta_i) - y_j^{\text{fixed}}(\beta_i)) - (a_i l_i + b_i)(1+ w_j A_j(\beta_i)).
\end{equation*}
Using these two expressions, we can compute the breakpoints sequentially, i.e., in ascending order. Note that this order can be determined using Lemma~\ref{lemma_xk} without knowledge of the actual values of the breakpoints. For each breakpoint $\eta_k$, we can compute the terms $y_j^{\text{fixed}}(\eta_k)$, $A_j(\eta_k)$, and $B_j(\eta_k)$ efficiently from the preceding breakpoint $\eta_i$ by exploiting the dependencies between the partitions $(\mathcal{N}_j^\text{lower} (\eta_i),\mathcal{N}_j^\text{upper} (\eta_i),\mathcal{N}_j^\text{free} (\eta_i))$ and $(\mathcal{N}_j^\text{lower} (\eta_k),\mathcal{N}_j^\text{upper} (\eta_k),\mathcal{N}_j^\text{free} (\eta_k))$ summarized in Table~\ref{tab_seq} (see also Figure~\ref{fig_example} and Equation~(\ref{eq_char_bp})). Given the smallest breakpoint $\bar{\eta}$, the sequential computation of the terms $y_j^{\text{fixed}}(\cdot)$, $A_j(\cdot)$, and $B_j(\cdot)$ is initialized by $y_j^{\text{fixed}}(\bar{\eta}) := \sum_{i \in \mathcal{N}_j} u_i$, $A_j(\bar{\eta}) := 0$, and $B_j(\bar{\eta}) := 0$. To determine if $\eta_k \equiv \alpha_k$ or $\eta_k \equiv \beta_k$, let $k_1$ be the index of the next lower breakpoint and $k_2$ the index of the next upper breakpoint. Thus, either $\eta_k = \alpha_{k_1}$ or $\eta_k = \beta_{k_2}$. Observe that the partition corresponding to the breakpoint $\eta_k$ does not depend on whether $\eta_k$ is a lower or upper breakpoint. Thus, it follows from the breakpoint expressions in Equations~(\ref{eq_BP_alpha_expr}) and~(\ref{eq_BP_beta_expr}) that $\eta_k = \alpha_{k_1}$ if $a_{k_1} u_{k_1} + b_{k_1} > a_{k_2} l_{k_2} + b_{k_2}$ and $\eta_k = \beta_{k_2}$ otherwise.

\begin{table}[ht!]
\centering
\resizebox{\textwidth}{!}{
\begin{tabular}{r | lll | lll}
\toprule
Type of $\eta_i$ & $\mathcal{N}_j^{\text{lower}}(\eta_k)$ &  $\mathcal{N}_j^{\text{upper}}(\eta_k)$ &  $\mathcal{N}_j^{\text{free}}(\eta_k)$ 
& $y_j^{\text{fixed}}(\eta_k)$ & $A_j(\eta_k)$ & $B_j(\eta_k)$ \\
\midrule
$\eta_i \equiv \alpha_i$ & $\mathcal{N}_j^{\text{lower}}(\eta_i)$ & $\mathcal{N}_j^{\text{upper}}(\eta_i) \backslash \lbrace i \rbrace$
& $\mathcal{N}_j^{\text{free}}(\eta_i) \cup \lbrace i \rbrace$
& $y_j^{\text{fixed}}(\eta_i) - u_i$
& $A_j(\eta_i) + \frac{1}{a_i}$
& $B_j(\eta_i) + \frac{b_i}{a_i}$ \\
$\eta_i \equiv \beta_i$ & $\mathcal{N}_j^{\text{lower}}(\eta_i) \cup \lbrace i \rbrace$ & $\mathcal{N}_j^{\text{upper}}(\eta_i) $
& $\mathcal{N}_j^{\text{free}}(\eta_i) \backslash \lbrace i \rbrace$
& $y_j^{\text{fixed}}(\eta_i) +l_i$
& $A_j(\eta_i) - \frac{1}{a_i}$
& $B_j(\eta_i) - \frac{b_i}{a_i}$ \\
\bottomrule
\end{tabular}
}
\caption{Relation between consecutive breakpoints $\eta_i$ and $\eta_k$ and their index set partitions.}
\label{tab_seq}
\end{table}

Algorithm~\ref{alg_BP} summarizes this approach to compute the breakpoints. Each new smallest breakpoint~$\eta_k$ in Line~4 can be retrieved in $O(1)$ time if we maintain the values in $\mathcal{P}_j$ and $\mathcal{Q}_j$ as sorted lists. As a consequence, the time complexity of Algorithm~\ref{alg_BP} for a given $j \in \mathcal{M}$ is $O(n_j \log n_j)$. Thus, the computation of the breakpoints for all variables can be done in $O(n \log n)$ time. Note that if each $n_j$ is equal to a given constant~$C$, i.e., all subsets $\mathcal{N}_j$ have the same cardinality, this complexity can be refined to $O\left(\sum_{j=1}^m C \log C \right) = O(Cm \log C) = O(n \log C)$. Thus, for a given~$C$ in this case the breakpoints can be computed in linear time.
\begin{algorithm}[ht!]
\caption{Computing the breakpoints for $j \in \mathcal{M}$.}
\label{alg_BP}
\begin{algorithmic}[5]
\STATE{Compute the sets $\mathcal{P}_j := \lbrace a_i l_i + b_i \ | i \in \mathcal{N}_j \rbrace$ and $\mathcal{Q}_j := \lbrace a_i u_i + b_i \ | \ i \in \mathcal{N}_j \rbrace$}
\STATE{Initialize $\bar{Y} := \sum_{i \in \mathcal{N}_j} u_i$; $\bar{A} := 0$; $\bar{B}:=0$}
\REPEAT
\STATE{Take smallest value $\eta_k := \min(\mathcal{P}_j \cup \mathcal{Q}_j)$}
\IF{$\eta_k \in \mathcal{Q}_j$ $\lbrace \eta_k \equiv \alpha_k \rbrace$}
\STATE{$y_j^{\text{fixed}}(\alpha_k) = \bar{Y}$;$A_j(\alpha_k) = \bar{A}$; $B_j(\alpha_k) = \bar{B}$}
\STATE{$\alpha_k := w_j (B_j(\alpha_k) - y_j^{\text{fixed}}(\alpha_k)) - (a_k u_k + b_k)(1 + w_j A_j(\alpha_k))$}
\STATE{$\bar{Y} = \bar{Y} - u_k$; $\bar{A} = \bar{A} + \frac{1}{a_k}$; $\bar{B} = \bar{B} + \frac{b_k}{a_k}$}
\STATE{$\mathcal{Q}_j = \mathcal{Q}_j \backslash \lbrace \eta_k \rbrace$}
\ELSE[$\eta_k \equiv \beta_k$]
\STATE{$y_j^{\text{fixed}}(\beta_k) = \bar{Y}$;$A_j(\beta_k) = \bar{A}$; $B_j(\beta_k) = \bar{B}$}
\STATE{$\beta_k := w_j (B_j(\beta_k) - y_j^{\text{fixed}}(\beta_k)) - (a_k l_k + b_k)(1 + w_j A_j(\beta_k))$}
\STATE{$\bar{Y} = \bar{Y} + l_k$; $\bar{A} = \bar{A} - \frac{1}{a_k}$; $\bar{B} = \bar{B} - \frac{b_k}{a_k}$}
\STATE{$\mathcal{P}_j = \mathcal{P}_j \backslash \lbrace \eta_k \rbrace$}
\ENDIF
\UNTIL{$\mathcal{P}_j \cup \mathcal{Q}_j = \emptyset$}
\end{algorithmic}
\end{algorithm}

\subsubsection{Computing $\lambda^*$}
\label{sec_opt_mult}

To finalize our approach, we need to compute $\lambda^*$ for a given~$\gamma$, which is the largest breakpoint such that $\gamma \leq \lambda^*$. In Section~\ref{sec_outline}, we showed that the partitioning of the variables under $\lambda^*$ can be derived from the partitioning under $\gamma$ and $\delta$, i.e., for each $j \in \mathcal{M}$, we have $\mathcal{N}_j^{\text{lower}}(\lambda^*) = \mathcal{N}_j^{\text{lower}}(\gamma)$, $\mathcal{N}_j^{\text{upper}}(\lambda^*) = \mathcal{N}_j^{\text{upper}}(\delta)$, and $\mathcal{N}_j^{\text{free}}(\lambda^*) = \mathcal{N}_j^{\text{free}}(\gamma)$. Moreover, as we have $\sum_{j=1}^m y_j(\lambda^*) = R$  by definition of $y_j$, we can apply the derived expression for general $y_j(\lambda)$ in Equation~(\ref{eq_z_sum}) to obtain the following linear equation in $\lambda^*$:
\begin{align*}
R = \sum_{j=1}^m y_j(\lambda^*)
&= \sum_{j=1}^m \left( \frac{ y_j^{\text{fixed}}(\lambda^*) -B_j(\lambda^*)}{1 + w_j A_j(\lambda^*)}
-\frac{A_j(\lambda^*)}{1+w_j A_j(\lambda^*)} \lambda^* \right) \\
&
= \sum_{j=1}^m \left( \frac{ y_j^{\text{fixed}}(\gamma) -B_j(\gamma)}{1 + w_j A_j(\gamma)}
-\frac{A_j(\gamma)}{1+w_j A_j(\gamma)} \lambda^* \right).
\end{align*}
It follows that
\begin{equation}
\lambda^* = \frac{\left(
\sum_{j=1}^m \frac{ y_j^{\text{fixed}}(\gamma) -B_j(\gamma)}{1 + w_j A_j(\gamma)}
\right)
-R}
{\sum_{j=1}^m
\frac{A_j(\gamma)}{1+w_j A_j(\gamma)}
}.
\label{eq_opt_mult}
\end{equation}
Note that, given the partitioning sets $\mathcal{N}_j^{\text{lower}}(\lambda^*)$, $\mathcal{N}_j^{\text{upper}}(\lambda^*)$, and $\mathcal{N}_j^{\text{free}}(\lambda^*)$, this expression allows us to compute $\lambda^*$ in $O(\sum_{j=1}^m n_j) = O(n)$ time.

\section{Two algorithms for Problem~\ref{prob}}
\label{sec_alg_two}

In this section, we present two algorithms that solve Problem~\ref{prob} according to the approach presented in Section~\ref{sec_sol}. This approach can be summarized by means of the following four steps:
\begin{enumerate}
\item Replace the generalized bound constraints~(\ref{eq_p_03}) by the box constraints $\underline{x}_i \leq x_i \leq \bar{x}_i$, $i \in \mathcal{N}$ (Section~\ref{sec_replace}),
\item Compute for each $i \in \mathcal{N}$ the lower and upper breakpoints $\alpha_i$ and $\beta_i$ (Section~\ref{sec_bp}),
\item Find $\gamma$ (Section~\ref{sec_outline}),
\item
Compute the optimal Lagrange multiplier~$\lambda^*$ (Section~\ref{sec_opt_mult}) and the optimal solution $x(\lambda^*)$ (Section~\ref{sec_comp}).
\end{enumerate}
Both algorithms follow these four steps. Their difference is in the execution of Step~3 or, more precisely, in how we search for $\gamma$ through the breakpoint set $\mathcal{B}$. In the first algorithm, we consider the breakpoints sequentially starting from the smallest breakpoint, whereas in the second algorithm, we apply binary search on $\mathcal{B}$. We present and discuss these algorithms and their breakpoint search strategies in more detail in Sections~\ref{sec_alg} and~\ref{sec_linear}.

\subsection{An $O(n \log n)$ time algorithm based on sequential breakpoint search}
\label{sec_alg}
The sequential breakpoint search strategy is similar to Algorithm~\ref{alg_BP} to compute the breakpoints, i.e., we search through the breakpoint set $\mathcal{B}$ in ascending order. For each considered breakpoint~$\eta_k$, $k \in \mathcal{N}_j$, we compute the sum $\sum_{j=1}^m y_j (\eta_k)$ using Equation~(\ref{eq_z_sum}). If $\sum_{j=1}^m y_j (\eta_k) > R$, it follows from Equation~(\ref{eq_z_02}) that $\eta_k < \lambda^*$ and we continue the search.  Otherwise, if $\sum_{j=1}^m y_j (\eta_k) < R$, then it follows from Equation~(\ref{eq_z_01}) that $\eta_k > \lambda^*$, meaning that $\gamma$ is the breakpoint preceding $\eta_k$ and that $\delta = \eta_k$. Subsequently, we can use Equation~(\ref{eq_opt_mult}) to compute~$\lambda^*$. Finally, if $\sum_{j=1}^m y_j(\eta_k) = R$, then $\lambda^* = \eta_k$ by definition of the values $y_j(\cdot)$.

To efficiently compute the sum $\sum_{j=1}^m y_j(\eta_k)$, we exploit the dependencies between $\eta_k$ and its preceding breakpoint $\eta_i$, $i \in \mathcal{N}_j$, given in Table~\ref{tab_seq}. This means that we can compute the terms $y_j^{\text{fixed}}(\eta_k)$, $A_j(\eta_k)$, and $B_j(\eta_k)$ in $O(1)$ time from the terms $y_j^{\text{fixed}}(\eta_i)$, $A_j(\eta_i)$, and $B_j(\eta_i)$. Moreover, by defining for a given $\lambda \in \mathbb{R}$
\begin{align*}
F(\lambda) &:= \sum_{j=1}^m \frac{ y_j^{\text{fixed}}(\lambda) - B_j(\lambda)}{1 + w_j A_j(\lambda)}, \\
V(\lambda) &:= \frac{A_j(\lambda)}{1+w_j A_j(\lambda)} .
\end{align*}
and by using these values and the dependencies in Table~\ref{tab_seq}, we can easily compute $\sum_{j=1}^m y_j(\eta_k)$ from $\sum_{j=1}^m y_j(\eta_i)$ in $O(1)$ time. For this, note that $\sum_{j=1}^m y_j(\lambda) = F(\lambda) - \lambda V(\lambda)$ by Equation~(\ref{eq_z_sum}). 

Algorithm~\ref{alg_01} summarizes the four steps of our overall solution approach where Step~3 is carried out using the sequential breakpoint search strategy. In this algorithm, Line~2 corresponds to Step~1, Line~3 to Step~2, Lines~5-36 to Step~3, and Lines~14 and~17 to Step~4. During each iteration~$\tau$ of the sequential breakpoint search in Lines~5-38, the set $\mathcal{B}^{\tau}$ is the part of the original breakpoint set~$\mathcal{B}$ that has not yet been searched in this iteration.

We state the time complexity of this algorithm in the following theorem:
\begin{theorem}
Algorithm~\ref{alg_01} has a worst-case time complexity of $O(n \log n)$.
\label{th_alg_01}
\end{theorem}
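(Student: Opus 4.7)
The plan is to bound the running time of each of the four steps of Algorithm~\ref{alg_01} separately and then sum the contributions. The bulk of the work lies in Steps~2 and~3 (breakpoint computation and sequential breakpoint search), each of which I would aim to carry out in $O(n \log n)$ time, while Steps~1 and~4 are linear.

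First, I would dispose of Step~1 by citing the remark following Lemma~\ref{lemma_el_new}: computing $\underline{x}$ and $\bar{x}$ amounts to solving the $2m$ subproblems \ref{prob_QRA}$^j(L_j)$ and \ref{prob_QRA}$^j(U_j)$, each a simple resource allocation problem that can be solved in $O(n_j)$ time with, e.g., the algorithm of \cite{Kiwiel2008a}. Summing over $j \in \mathcal{M}$ gives $O(\sum_{j=1}^m n_j) = O(n)$. For Step~2, I would invoke directly the analysis of Algorithm~\ref{alg_BP} in Section~\ref{sec_bp}, which already establishes an $O(n_j \log n_j)$ bound per index set $\mathcal{N}_j$ and hence $O(n \log n)$ overall.

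The core of the proof is Step~3. Here I would argue that the sequential breakpoint search can be implemented so that it performs $O(n \log n)$ work in total. The breakpoint set $\mathcal{B}$ has cardinality $2n$, so once it has been sorted in ascending order (an $O(n \log n)$ preprocessing step, or equivalently using a min-heap with $O(\log n)$ extractions) the remaining cost is $O(1)$ per iteration. To justify the constant per-iteration cost, I would show that upon moving from a breakpoint $\eta_i$ to the next breakpoint $\eta_k$, the terms $y_j^{\text{fixed}}$, $A_j$, $B_j$ of the single index~$j$ containing this breakpoint are updated in $O(1)$ via the formulas of Table~\ref{tab_seq}, the running aggregates $F(\cdot)$ and $V(\cdot)$ defined in Section~\ref{sec_alg} each change in only the one summand indexed by that~$j$ and hence are also updated in $O(1)$, and the test value $\sum_{j=1}^m y_j(\eta_k) = F(\eta_k) - \eta_k V(\eta_k)$ is then obtained in $O(1)$ by Equation~(\ref{eq_z_sum}). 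Since there are at most $2n$ breakpoints to examine, the search contributes $O(n)$ on top of the $O(n \log n)$ sort.

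Finally, Step~4 is $O(n)$: by the preceding iteration we already have $y_j^{\text{fixed}}(\gamma)$, $A_j(\gamma)$, $B_j(\gamma)$ for every $j$, so Equation~(\ref{eq_opt_mult}) yields $\lambda^*$ in $O(m) \subseteq O(n)$, after which Equation~(\ref{eq_x}) reconstructs $x(\lambda^*)$ in $O(n)$. Adding the four contributions yields the claimed $O(n \log n)$ bound. The only subtle point, and the main obstacle, is verifying that the per-iteration updates of $F$ and $V$ in the search loop really are $O(1)$; this hinges on the observation that each breakpoint affects the partition of only one $\mathcal{N}_j$, so only one summand of the aggregate is touched per iteration, which is exactly what Table~\ref{tab_seq} encodes.
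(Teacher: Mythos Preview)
Your proposal is correct and follows essentially the same approach as the paper's own proof, breaking the complexity analysis into the four steps and establishing the $O(n \log n)$ bottleneck in the breakpoint computation and sorting while showing the sequential search iterations are $O(1)$ each. If anything, your argument is more explicit than the paper's about why the per-iteration cost is constant, namely that each breakpoint affects only one $\mathcal{N}_j$ so only one summand of $F$ and $V$ needs updating.
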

\begin{proof}
First, the elimination of Constraint~(\ref{eq_p_03}) in Line~2 takes $O(n)$ time. Second, the computation of the breakpoints by means of Algorithm~\ref{alg_BP} in Line~3 takes $O(n \log n)$ time. Third, each iteration of the sequential breakpoint procedure can be executed in $O(1)$ time if we maintain the breakpoint sets as sorted lists so that computing the smallest value $\eta_k$ in Line~10 can be done in $O(1)$ time. Finally, once $\lambda^*$ has been found in either Line~14 or~17, we can compute the optimal solution $x(\lambda^*)$ in $O(n)$ time using Equation~(\ref{eq_x}). Summarizing, the worst-case time complexity of Algorithm~\ref{alg_01} is $O(n \log n)$.
\end{proof}
Note, that besides the computation and sorting of the breakpoints, Algorithm~\ref{alg_01} runs in linear time.

\begin{algorithm}
\caption{Solving Problem~\ref{prob} using sequential breakpoint search.}
\label{alg_01}
\begin{multicols}{2}
\begin{algorithmic}[5]
\FOR{$j \in \mathcal{M}$}
\STATE{Solve \ref{prob_QRA}$^j(L_j)$ and \ref{prob_QRA}$^j(U_j)$ and set $l_i := \max(l_i, \underline{x}_i(\underline{\lambda}^j(L_j)))$ and $u_i := \min(u_i, \underline{x}_i(\underline{\lambda}^j(U_j)))$}
\STATE{Compute $\alpha_i$ and $\beta_i$ for each $i \in \mathcal{N}_j$ using Algorithm~\ref{alg_BP}}
\ENDFOR
\STATE{$\mathcal{B}  := \lbrace \alpha_i \ | \ i \in \mathcal{N} \rbrace \cup \lbrace \beta_i \ | \ i \in \mathcal{N} \rbrace$; $\tau := 0$; $\mathcal{B}^0 := \mathcal{B}$; $F := \sum_{i=1}^n u_i$; $V :=0$}
\STATE{For $j \in \mathcal{M}$: Initialize $\bar{Y}_j := \sum_{i \in \mathcal{N}_j} u_i$; $\bar{A}_j := 0$; $\bar{B}_j := 0$}
\WHILE{$\lambda^*$ has not been found yet}
\STATE{Take smallest value $\eta_k := \min(\mathcal{B}^{\tau})$ and $j$ with $k \in \mathcal{N}_j$}
\FOR{$j' \in \mathcal{M}$}
\STATE{$y_{j'}^{\text{fixed}}(\eta_k) = \bar{Y}_{j'}$; $A_{j'}(\eta_k) = \bar{A}_{j'}$; $B_{j'}(\eta_k) = \bar{B}_{j'}$}
\ENDFOR
\STATE{Compute $\sum_{j'=1}^m y_{j'}(\eta_k) = F - V \alpha_k$}
\IF{$\sum_{j'=1}^m y_{j'}(\eta_k) = R$}
\STATE{$\lambda^* = \eta_k$; compute $x(\lambda^*)$ as $x(\eta_k)$ using Equation~(\ref{eq_x})}
\RETURN
\ELSIF{$\sum_{j'=1}^m y_{j'}(\eta_k) < R$}
\STATE{$\lambda^* = \frac{F - R}{V}$; compute $x(\lambda^*)$ using Equation~(\ref{eq_x})}
\RETURN
\ELSE
\STATE{$F = F - \frac{\bar{Y}_j - \bar{B}_j}{1 + w_j \bar{A}_j}$}
\STATE{$V = V - \frac{\bar{A}_j}{1 + w_j \bar{A}_j}$}
\IF{$ \eta_k \equiv \alpha_k$}
\STATE{$\bar{Y}_j = \bar{Y}_j - u_i$}
\STATE{$\bar{A}_j = \bar{A}_j + \frac{1}{a_i}$}
\STATE{$\bar{B}_j = \bar{B}_j + \frac{b_i}{a_i}$}
\ELSE
\STATE{$\bar{Y}_j = \bar{Y}_j + l_i$}
\STATE{$\bar{A}_j = \bar{A}_j - \frac{1}{a_i}$}
\STATE{$\bar{B}_j = \bar{B}_j - \frac{b_i}{a_i}$}
\ENDIF
\STATE{$F = F + \frac{\bar{Y}_j - \bar{B}_j}{1 + w_j \bar{A}_j}$}
\STATE{$V = V + \frac{\bar{A}_j}{1 + w_j \bar{A}_j}$}
\STATE{$\mathcal{B}^{\tau + 1} := \mathcal{B}^{\tau} \backslash \lbrace \eta_k \rbrace$}
\STATE{$\tau = \tau + 1$}
\ENDIF
\ENDWHILE
\end{algorithmic}
\end{multicols}
\end{algorithm}

\subsection{An $O(n \log n)$ time algorithm based on binary breakpoint search}
\label{sec_linear}

In this subsection we present an alternative approach, where we apply binary search on the set of breakpoints. During each iteration~$\tau$ of the binary search, we compute the median~$\hat{\gamma}^{\tau}$ of the current breakpoint set $\mathcal{B}^{\tau}$, i.e., of the subset of the original breakpoint set that is guaranteed to contain the breakpoint~$\gamma$. For this median breakpoint, we compute the sum $\sum_{j=1}^m y_j (\hat{\gamma}^{\tau})$ and compare this value to the given amount~$R$ of the resource. If $\sum_{j=1}^m y_j(\hat{\gamma}^{\tau}) = R$, then $\lambda^* = \hat{\gamma}^{\tau}$. Otherwise, if $\sum_{j=1}^m y_j(\hat{\gamma}^{\tau}) < R$, then $\hat{\gamma}^{\tau} > \lambda^* \geq \gamma$ and during the next iteration $\tau + 1$ we take as breakpoint set $\mathcal{B}^{\tau + 1} := \lbrace \lambda \in \mathcal{B}^{\tau} \ | \ \lambda < \hat{\gamma}^{\tau} \rbrace$. Finally, if $\sum_{j=1}^m y_j(\hat{\gamma}^{\tau}) > R$, we have that $\hat{\gamma}^{\tau} < \lambda^* < \delta$ and during the next iteration $\tau + 1$ we take as breakpoint set $\mathcal{B}^{\tau + 1} := \lbrace \lambda \in \mathcal{B}^{\tau} \ | \ \lambda \geq \hat{\gamma}^{\tau} \rbrace$.

To efficiently compute each sum $\sum_{j=1}^m y_j(\hat{\gamma}^{\tau})$, we use the following observation that is inspired by the breakpoint search approach in \cite{Kiwiel2008a} for separable quadratic resource allocation problems. For a given iteration $\tau$ of the binary search, let $\lambda_{\downarrow}^{\tau}$ and $\lambda_{\uparrow}^{\tau}$ denote the minimum and maximum breakpoint in the current breakpoint set $\mathcal{B}^{\tau}$. Then for any multiplier $\lambda$ that lies within the interval $[\lambda_{\downarrow}^{\tau},\lambda_{\uparrow}^{\tau}]$ and each $j \in \mathcal{M}$ and $i \in \mathcal{N}_j$, the following is true due to Equation~(\ref{eq_char_bp}):
\begin{subequations}
\label{eq_lambda_assign}
\begin{align}
\beta_i \leq \lambda_{\downarrow}^{\tau}
& \Rightarrow
i \in \mathcal{N}_j^{\text{lower}}(\lambda), \label{eq_lambda_assign_01} \\
\alpha_i \leq \lambda_{\downarrow}^{\tau} \leq \lambda_{\uparrow}^{\tau} \leq \beta_i
& \Rightarrow
i \in \mathcal{N}_j^{\text{free}}(\lambda), \label{eq_lambda_assign_02} \\
\lambda_{\uparrow}^{\tau} \leq \alpha_i
& \Rightarrow
i \in \mathcal{N}_j^{\text{upper}}(\lambda). \label{eq_lambda_assign_03}
\end{align}
\end{subequations}
We introduce the following sets, which partition the set $\mathcal{N}_j$ according to which of the above cases applies during iteration~$\tau$:
\begin{subequations}
\label{eq_def_book}
\begin{align}
\mathcal{L}_j^{\tau}
&:= \lbrace i \in \mathcal{N}_j \ | \ \beta_i \leq \lambda_{\downarrow}^{\tau} \rbrace, \\
\mathcal{F}_j^{\tau}
&:= \lbrace i \in \mathcal{N}_j \ | \ \alpha_i \leq \lambda_{\downarrow}^{\tau} \leq \lambda_{\uparrow}^{\tau} \leq \beta_i \rbrace, \\
\mathcal{U}_j^{\tau}
&:= \lbrace i \in \mathcal{N}_j \ | \ \lambda_{\uparrow}^{\tau} \leq \alpha_i \rbrace, \\
\mathcal{I}_j^{\tau}
&:= \mathcal{N}_j \backslash (\mathcal{L}_j^{\tau} \cup \mathcal{F}_j^{\tau} \cup \mathcal{U}_j^{\tau})
= \lbrace i \in \mathcal{N}_j \ | \ \lambda_{\downarrow}^{\tau} < \alpha_i < \lambda_{\uparrow}^{\tau} \text{ or } \lambda_{\downarrow}^{\tau} < \beta_i < \lambda_{\uparrow}^{\tau} \rbrace
\end{align}
\end{subequations}
(see also Figure~\ref{fig_bp}). Note, that for any $\lambda$ such that $\lambda_{\downarrow}^{\tau} \leq \lambda \leq \lambda_{\uparrow}^{\tau}$, we have 
\begin{align*}
i \in \mathcal{L}_j^{\tau}
 & \Rightarrow i \in \mathcal{N}_j^{\text{lower}}(\lambda), \\
i \in \mathcal{F}_j^{\tau}
 & \Rightarrow i \in \mathcal{N}_j^{\text{free}}(\lambda), \\
 i \in \mathcal{U}_j^{\tau}
 & \Rightarrow i \in \mathcal{N}_j^{\text{upper}}(\lambda).
 \end{align*}

 \begin{figure}[ht!]
\caption{Example of the partitioning of the variables based on their breakpoints and the interval $[\lambda_{\downarrow}^{\tau},\lambda_{\uparrow}^{\tau}]$.} \label{fig_bp}
{\includegraphics{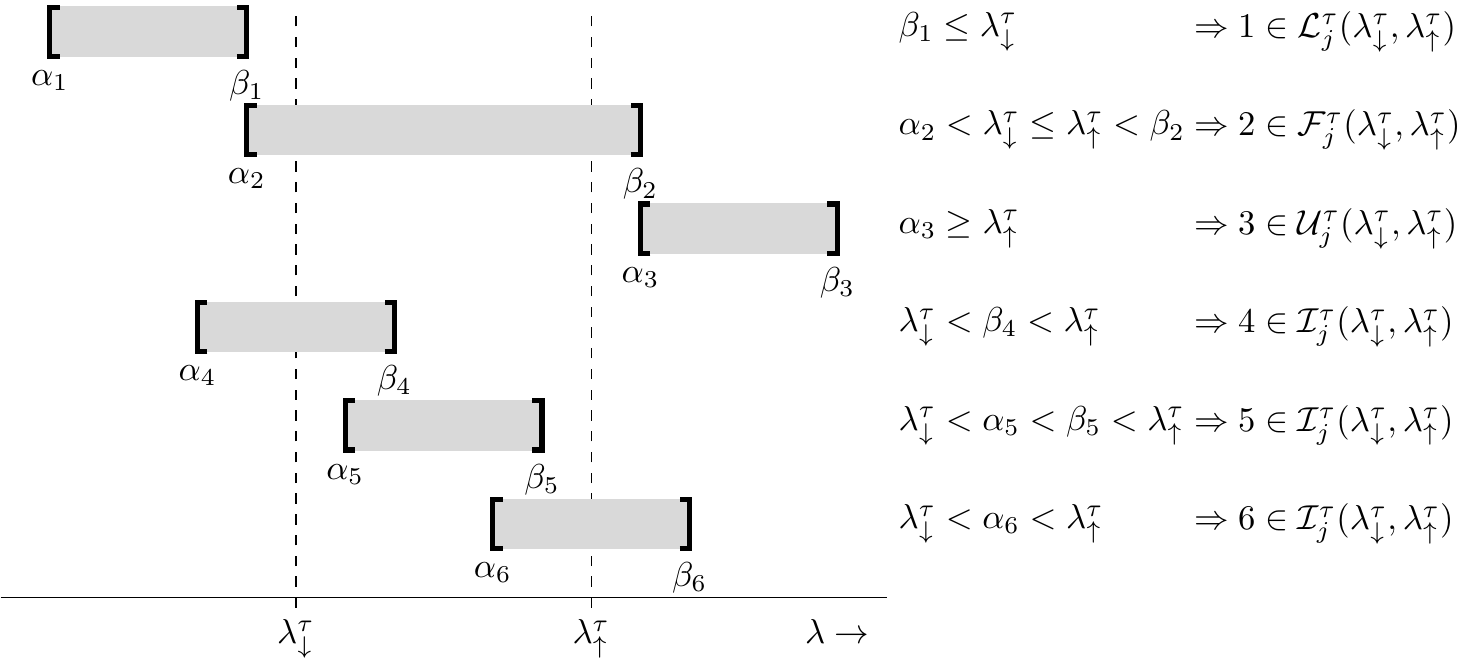}}t
\end{figure}

Due to the construction of the sets $\mathcal{B}^{\tau}$, the sequence $(\lambda_{\downarrow}^{\tau})_{\tau \in \mathbb{N}}$ is nondecreasing and the sequence $(\lambda_{\uparrow}^{\tau})_{\tau \in \mathbb{N}}$ is non-increasing. This implies that as soon as one of the three cases~(\ref{eq_lambda_assign_01})-(\ref{eq_lambda_assign_02}) occurs during an iteration~$\tau$ for an index $i \in \mathcal{N}_j$, we already know for any future candidate breakpoint $\hat{\gamma}^{\bar{\tau}}$ that $i \in \mathcal{N}_j^{\text{lower}}(\hat{\gamma}^{\bar{\tau}})$, $i \in \mathcal{N}_j^{\text{free}}(\hat{\gamma}^{\bar{\tau}})$, or $i \in \mathcal{N}_j^{\text{upper}}(\hat{\gamma}^{\bar{\tau}})$ respectively. In particular, we know that $i \in \mathcal{N}_j^{\text{lower}}(\lambda^*)$, $i \in \mathcal{N}_j^{\text{free}}(\lambda^*)$, or $i \in \mathcal{N}_j^{\text{upper}}(\lambda^*)$ respectively. Thus, when determining the partition $(\mathcal{N}_j^{\text{lower}}(\hat{\gamma}^{\bar{\tau}}), \mathcal{N}_j^{\text{free}}(\hat{\gamma}^{\tau}), \mathcal{N}_j^{\text{upper}}(\hat{\gamma}^{\bar{\tau}}))$, we only need to determine the membership of $x_k(\hat{\gamma}^{\bar{\tau}})$ for all $k \in \mathcal{I}_j^{\tau}$ instead of for all $k \in \mathcal{N}_j$ when the sets $\mathcal{L}_j^{\tau}$, $\mathcal{F}_j^{\tau}$, and $\mathcal{U}_j^{\tau}$ are known.

The main computational gain is obtained by introducing for each iteration~$\tau$ the following bookkeeping parameters:
\begin{equation*}
Y_j^{\tau} := \sum_{i \in \mathcal{L}_j^{\tau}} l_i + \sum_{i \in \mathcal{U}_j^{\tau}} u_i,
\quad
\bar{A}_j^{\tau} := \sum_{i \in \mathcal{F}_j^{\tau}} \frac{1}{a_i},
\quad
\bar{B}_j^{\tau} := \sum_{i \in \mathcal{F}_j^{\tau}} \frac{b_i}{a_i}.
\end{equation*}
Observe that if the set $\mathcal{I}_j^{\tau}$ and the bookkeeping parameters $Y_j^{\tau}$, $\bar{A}_j^{\tau}$, and $\bar{B}_j^{\tau}$ are known, then computing $y_j(\lambda)$ for any $\lambda_{\downarrow}^{\tau} \leq \lambda \leq \lambda_{\uparrow}^{\tau}$ via Equation~(\ref{eq_z}) can be done in $O(|\mathcal{I}_j^{\tau}|)$ time instead of $O(n_j)$ time. 

We summarize the resulting four steps of our overall solution, using in Step 3 the discussed binary breakpoint search strategy, in Algorithm~\ref{alg_02}. In this algorithm, Line~2 corresponds to Step~1, Line~3 to Step~2, Lines~8-46 to Step~3, and Lines~24 and~47-48 to Step~4. In each iteration~$\tau$, the new set $\mathcal{I}_j^{\tau + 1}$ and bookkeeping values $Y_j^{\tau + 1}$, $\bar{A}_j^{\tau + 1}$, and $\bar{B}_j^{\tau + 1}$ are constructed after the new breakpoint set $\mathcal{B}^{\tau + 1}$ and lower and upper bounds $\lambda_{\downarrow}^{\tau + 1}$ and $\lambda_{\uparrow}^{\tau + 1}$ have been determined. This update can be done in line with the definition of the sets $\mathcal{L}_j^{\tau + 1}$, $\mathcal{F}_j^{\tau + 1}$, $\mathcal{U}_j^{\tau + 1}$, and $\mathcal{I}_j^{\tau + 1}$ in Equation~(\ref{eq_def_book}).

We establish the worst-case time complexity of Algorithm~\ref{alg_02} by means of Lemma~\ref{lemma_alg_02_linear} and Theorem~\ref{th_alg_02}. First, Lemma~\ref{lemma_alg_02_linear} states that the binary search procedure can be carried out in $O(n)$ time.
\begin{lemma}
The binary breakpoint search procedure in Lines~8-46 of Algorithm~\ref{alg_02} has a time complexity of $O(n)$.
\label{lemma_alg_02_linear}
\end{lemma}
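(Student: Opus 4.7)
The plan is to bound the overall cost of the binary search by a geometric series in $|\mathcal{B}^\tau|$ that sums to $O(n)$. To this end, I would establish two facts: first, that each iteration~$\tau$ can be executed in $O(|\mathcal{B}^\tau|)$ time, and second, that $|\mathcal{B}^{\tau+1}| \leq \tfrac{1}{2} |\mathcal{B}^\tau|$. Since $|\mathcal{B}^0| = 2n$, summing over iterations then gives total cost $\sum_{\tau \geq 0} O(2n \cdot 2^{-\tau}) = O(n)$.

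For the halving and the per-iteration median computation, I would invoke a linear-time median-finding routine to obtain $\hat{\gamma}^\tau$ from $\mathcal{B}^\tau$ and to split $\mathcal{B}^\tau$ around $\hat{\gamma}^\tau$ into its two halves, one of which becomes $\mathcal{B}^{\tau+1}$. This guarantees the halving and costs $O(|\mathcal{B}^\tau|)$ per iteration. To evaluate $\sum_{j=1}^m y_j(\hat{\gamma}^\tau)$ via Equation~(\ref{eq_z_sum}), I would use that the bookkeeping quantities $Y_j^\tau$, $\bar{A}_j^\tau$, $\bar{B}_j^\tau$ already aggregate the contributions of all indices in $\mathcal{L}_j^\tau \cup \mathcal{F}_j^\tau \cup \mathcal{U}_j^\tau$, so that only the indices in $\mathcal{I}_j^\tau$ need individual inspection. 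The key observation is that every $i \in \mathcal{I}_j^\tau$ has at least one of $\alpha_i, \beta_i$ strictly inside $(\lambda_\downarrow^\tau, \lambda_\uparrow^\tau)$ and hence in $\mathcal{B}^\tau$; therefore $\sum_{j=1}^m |\mathcal{I}_j^\tau| \leq |\mathcal{B}^\tau|$, and the evaluation costs $O(|\mathcal{B}^\tau|)$ as well.

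Third, I would argue that updating the sets $\mathcal{L}_j^{\tau+1}, \mathcal{F}_j^{\tau+1}, \mathcal{U}_j^{\tau+1}, \mathcal{I}_j^{\tau+1}$ and the aggregates $Y_j^{\tau+1}, \bar{A}_j^{\tau+1}, \bar{B}_j^{\tau+1}$ from their iteration-$\tau$ counterparts can be done in $O(|\mathcal{B}^\tau \setminus \mathcal{B}^{\tau+1}|)$ time. Because the sequence $(\lambda_\downarrow^\tau)$ is nondecreasing and $(\lambda_\uparrow^\tau)$ is nonincreasing, once an index migrates out of $\mathcal{I}_j^\tau$ into one of $\mathcal{L}, \mathcal{F}, \mathcal{U}$ it stays there, and each such migration is triggered by a specific breakpoint leaving $\mathcal{B}^\tau$. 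Thus the total bookkeeping work across all iterations is $O(\sum_\tau |\mathcal{B}^\tau \setminus \mathcal{B}^{\tau+1}|) = O(|\mathcal{B}^0|) = O(n)$.

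Combining these ingredients yields
\begin{equation*}
\sum_{\tau=0}^{O(\log n)} O(|\mathcal{B}^\tau|) + O(n) \leq O\left( |\mathcal{B}^0| \sum_{\tau \geq 0} 2^{-\tau} \right) + O(n) = O(n).
\end{equation*}
The main obstacle I foresee lies in step three: one has to show that the update of $\mathcal{I}_j^{\tau+1}$ and of the bookkeeping values can actually be implemented by touching only the indices corresponding to breakpoints in $\mathcal{B}^\tau \setminus \mathcal{B}^{\tau+1}$, rather than rescanning all of $\mathcal{I}_j^\tau$. This relies on a careful data-structure argument in which the indices associated with each breakpoint are looked up in $O(1)$ time and the aggregates $Y_j, \bar{A}_j, \bar{B}_j$ are updated incrementally using the transition rules of Table~\ref{tab_seq}.
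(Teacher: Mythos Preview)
Your proposal is correct and follows the same overall architecture as the paper: halving of $\mathcal{B}^\tau$ via linear-time median selection, the key inequality $\sum_{j} |\mathcal{I}_j^\tau| \leq |\mathcal{B}^\tau|$, and a geometric-series summation to $O(n)$.

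The one place where you diverge from the paper is step three, and this is precisely the spot you flag as an obstacle. You aim to update the bookkeeping in $O(|\mathcal{B}^\tau \setminus \mathcal{B}^{\tau+1}|)$ time by touching only the indices whose breakpoints have just been discarded, and you worry this needs a delicate data-structure argument. The paper sidesteps this entirely: it simply rescans all of $\mathcal{I}_j^\tau$ during the update (exactly as Lines~35--43 of Algorithm~\ref{alg_02} are written), which costs $O\bigl(\sum_j |\mathcal{I}_j^\tau|\bigr)$. But you have already established $\sum_j |\mathcal{I}_j^\tau| \leq |\mathcal{B}^\tau|$ for the evaluation step, and the \emph{same} inequality bounds the update. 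Hence the update is $O(|\mathcal{B}^\tau|)$ per iteration with no amortization and no data-structure subtlety, and the geometric series still sums to $O(n)$. Your ``main obstacle'' is therefore not an obstacle at all---it dissolves once you reuse the bound you already have. Your amortized argument is not wrong, just unnecessary; the paper's route is shorter and matches the algorithm as actually stated.
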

\begin{proof}
We show that each iteration~$\tau$ of the binary breakpoint search has a time complexity of $O(|\mathcal{B}^{\tau}|)$. Since $|\mathcal{B}^{\tau + 1}| \leq \frac{1}{2} |\mathcal{B}^{\tau}|$ for each iteration $\tau$, it follows that the time complexity of the binary search procedure is
\begin{equation*}
O \left(  \sum_{\tau = 0}^{\log(n)} |\mathcal{B}^{\tau}|  \right) =
O \left( \sum_{\tau = 0}^{\log(n)} \frac{n}{2^{\tau}} \right)
= O(n).
\end{equation*}

We establish the time complexity of an iteration~$\tau$ using the following two observations:
\begin{enumerate}
\item
First, we consider the computation of the candidate multiplier~$\hat{\gamma}^{\tau}$ in Line~9. Note, that the median of an unsorted set of breakpoints~$\mathcal{B}^{\tau}$ can be computed in $O(|\mathcal{B}^{\tau}|)$ time using, e.g., the median-of-medians algorithm in \cite{Blum1973}. This means that instead of sorting the initial breakpoint set~$\mathcal{B}$ in $O(n \log n)$ time and retrieving median elements in $O(1)$ time, we can compute each candidate multiplier~$\hat{\gamma}^{\tau}$ in $O(|\mathcal{B}^{\tau}|)$ time.
\item
Second, by introducing the partition sets $\mathcal{L}_j^{\tau + 1}$, $\mathcal{F}_j^{\tau + 1}$, $\mathcal{U}_j^{\tau + 1}$, and $\mathcal{I}_j^{\tau + 1}$ and the bookkeeping values $Y_j^{\tau}$, $\bar{A}_j^{\tau}$, and $\bar{B}_j^{\tau}$, we reduce the worst-case time complexity of computing $\sum_{j=1}^m y_j(\hat{\gamma}^{\tau})$ from $O(n)$ to $O(\sum_{j=1}^m |\mathcal{I}_j^{\tau}|)$. On the other hand, constructing the new set $\mathcal{I}_j^{\tau + 1}$ and the bookkeeping values $Y_j^{\tau + 1}$, $\bar{A}_j^{\tau + 1}$, and $\bar{B}_j^{\tau + 1}$ in Lines~33-44 takes $O(\sum_{j=1}^m |\mathcal{I}_j^{\tau}|)$ time.
\end{enumerate}
Thus, the time complexity of the $\tau^{\text{th}}$ iteration of the binary search loop is $O(|\mathcal{B}^{\tau}| + \sum_{j=1}^m |I_j^{\tau}|)$. Observe that by definition of $I_j^{\tau}$, for each $j \in \mathcal{M}$ and each index $k \in \mathcal{I}_j^{\tau}$ there is at least one breakpoint ($\alpha_k$ or $\beta_k$ or both) in the set of current breakpoints $\mathcal{B}^{\tau}$. This implies that $\sum_{j=1}^m |I_j^{\tau}| \leq |\mathcal{B}^{\tau}|$. It follows that the time complexity of the $\tau^{\text{th}}$ iteration of the binary search loop reduces to $O(|\mathcal{B}^{\tau}|)$.
\end{proof}

Using this lemma, we establish the time complexity of Algorithm~\ref{alg_02}:
\begin{theorem}
Algorithm~\ref{alg_02} has a time complexity of $O(n \log n)$.
\label{th_alg_02}
\end{theorem}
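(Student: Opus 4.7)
The plan is to establish the complexity of Theorem~\ref{th_alg_02} by summing the costs of the four steps of Algorithm~\ref{alg_02} listed at the start of Section~\ref{sec_alg_two}, exactly as was done in the proof of Theorem~\ref{th_alg_01}, but with Step~3 replaced by the binary-search analysis. Since Lemma~\ref{lemma_alg_02_linear} already bounds Step~3 at $O(n)$, the bulk of the argument is bookkeeping and no genuinely new idea is needed.

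First, I would account for Step~1 (Line~2): we solve $2m$ simple quadratic resource allocation subproblems \ref{prob_QRA}$^j(L_j)$ and \ref{prob_QRA}$^j(U_j)$ and update each $l_i$ and $u_i$. As noted at the end of Section~\ref{sec_replace}, each such subproblem can be solved in $O(n_j)$ time using the algorithm of Kiwiel~\cite{Kiwiel2008a}, so the total cost of this step is $O\bigl(\sum_{j=1}^m n_j\bigr)=O(n)$. Second, for Step~2 (Line~3), we invoke Algorithm~\ref{alg_BP} once for each $j \in \mathcal{M}$; as established in Section~\ref{sec_bp}, this takes $O(n_j \log n_j)$ per subset and hence $O(n \log n)$ in total, which will turn out to be the dominant cost.

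Third, for Step~3 (Lines~8--46) I would directly invoke Lemma~\ref{lemma_alg_02_linear}, which gives an $O(n)$ bound on the binary breakpoint search loop. Finally, for Step~4, once either the exact hit $\sum_{j'} y_{j'}(\hat{\gamma}^\tau)=R$ occurs (Line~24) or the binary search terminates with an interval containing $\lambda^*$ (Lines~47--48), the multiplier $\lambda^*$ can be evaluated in $O(n)$ time from the closed-form Equation~(\ref{eq_opt_mult}) using the already-maintained bookkeeping quantities $Y_j^\tau$, $\bar{A}_j^\tau$, $\bar{B}_j^\tau$, and then $x(\lambda^*)$ can be reconstructed in $O(n)$ time via Equation~(\ref{eq_x}) as described in Section~\ref{sec_comp}.

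Adding the four contributions gives $O(n) + O(n \log n) + O(n) + O(n) = O(n \log n)$, which yields the claimed bound. The only step that requires any care is verifying that the bookkeeping maintained during Step~3 still supports an $O(n)$-time execution of Step~4; here the main observation is that at termination the partition $(\mathcal{N}_j^{\text{lower}}(\lambda^*),\mathcal{N}_j^{\text{free}}(\lambda^*),\mathcal{N}_j^{\text{upper}}(\lambda^*))$ coincides with $(\mathcal{L}_j^{\tau}\cup\{i\in\mathcal{I}_j^\tau : \beta_i\le\gamma\},\,\mathcal{F}_j^\tau\cup\{i\in\mathcal{I}_j^\tau:\alpha_i\le\gamma<\beta_i\},\,\mathcal{U}_j^\tau\cup\{i\in\mathcal{I}_j^\tau:\alpha_i>\gamma\})$ for the final iteration~$\tau$, and each $|\mathcal{I}_j^\tau|$ summed over $j$ is at most $|\mathcal{B}^\tau|\le n$, so this final reassembly is linear. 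Since no step exceeds the $O(n\log n)$ cost of sorting/computing the breakpoints, the theorem follows.
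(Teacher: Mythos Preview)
Your proposal is correct and follows essentially the same approach as the paper: the paper's proof simply states that, analogously to Theorem~\ref{th_alg_01}, all operations other than the binary search in Lines~8--46 take $O(n\log n)$ time, and then invokes Lemma~\ref{lemma_alg_02_linear} for the $O(n)$ cost of the binary search. Your version is just a more explicit unpacking of the same four-step decomposition, with the same dominant $O(n\log n)$ term coming from the breakpoint computation in Step~2.
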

\begin{proof}
Analogously to Theorem~\ref{th_alg_01}, all operations other than the binary search procedure in Lines~8-46 take $O(n \log n)$ time. Since the binary search procedure takes $O(n)$ time by Lemma~\ref{lemma_alg_02_linear}, the overall time complexity of Algorithm~\ref{alg_02} is $O(n \log n)$.
\end{proof}

\begin{algorithm}
\caption{Solving Problem~\ref{prob} using binary breakpoint search.}
\label{alg_02}
\begin{multicols}{2}
\begin{algorithmic}[5]
\FOR{$j \in \mathcal{M}$}
\STATE{Solve \ref{prob_QRA}$^j(L_j)$ and \ref{prob_QRA}$^j(U_j)$ and set $l_i := \max(l_i, \underline{x}_i(\underline{\lambda}^j(L_j)))$ and $u_i := \min(u_i, \underline{x}_i(\underline{\lambda}^j(U_j)))$}
\STATE{Compute $\alpha_i$ and $\beta_i$ for each $i \in \mathcal{N}_j$ using Algorithm~\ref{alg_BP}}
\ENDFOR
\STATE{$\mathcal{B}  := \lbrace \alpha_i \ | \ i \in \mathcal{N} \rbrace \cup \lbrace \beta_i \ | \ i \in \mathcal{N} \rbrace$; $\mathcal{B}^0 := \mathcal{B}$; $\tau := 0$}
\STATE{For $j \in \mathcal{M}$: $\mathcal{I}_j^0 := \mathcal{N}_j$, $Y_j^0 = \bar{A}_j^0 = \bar{B}_j^0 = 0$}
\STATE{$\lambda_{\downarrow}^0 = -\infty$; $\lambda_{\uparrow}^0 = \infty$}
\WHILE{$|\mathcal{B}^{\tau}| > 1$}
\STATE{$\hat{\gamma}^{\tau} := \text{median}(\mathcal{B}^{\tau})$}
\FOR{$j\in \mathcal{M}$}
\STATE{$y_j^{\text{fixed}}(\hat{\gamma}^{\tau}) := Y_j^{\tau}$; $A_j(\hat{\gamma}) := \bar{A}_j^{\tau}$; $B_j(\hat{\gamma}) := \bar{B}_j^{\tau}$}
\FOR{$k \in \mathcal{I}_j^{\tau}$}
\IF{$k \in \mathcal{N}_j^{\text{lower}}(\hat{\gamma}^{\tau})$}
\STATE{$y_j^{\text{fixed}}(\hat{\gamma}^{\tau}) = y_j^{\text{fixed}}(\hat{\gamma}^{\tau}) + l_k$}
\ELSIF{$k \in \mathcal{N}_j^{\text{upper}}(\hat{\gamma}^{\tau})$}
\STATE{$y_j^{\text{fixed}}(\hat{\gamma}^{\tau}) = y_j^{\text{fixed}}(\hat{\gamma}^{\tau}) + u_k$}
\ELSE
\STATE{$A_j(\hat{\gamma}^{\tau}) = A_j(\hat{\gamma}^{\tau}) + 1 / a_k$; $B_j(\hat{\gamma}^{\tau}) = B_j(\hat{\gamma}^{\tau}) + b_k / a_k$}
\ENDIF
\ENDFOR
\ENDFOR
\STATE{Compute $\sum_{j=1}^m y_j(\hat{\gamma}^{\tau})$ using Equation~(\ref{eq_z})}
\IF{$\sum_{j=1}^m y_j (\hat{\gamma}^{\tau}) = R$}
\STATE{$\lambda^* = \hat{\gamma}^{\tau}$; compute $x(\lambda)$ as $x(\hat{\gamma}^{\tau})$ using Equation~(\ref{eq_x})}
\RETURN
\ELSIF{$\sum_{i=1}^n x_i (\hat{\gamma}^{\tau})< R$}
\STATE{$\mathcal{B}^{\tau + 1} := \lbrace \lambda \in \mathcal{B}^{\tau} \ | \ \lambda < \hat{\gamma}^{\tau} \rbrace$}
\STATE{Determine new bounds: $\lambda_{\downarrow}^{\tau + 1} := \lambda_{\downarrow}^{\tau}$; $\lambda_{\uparrow}^{\tau + 1} := \hat{\gamma}^{\tau}$}
\ELSE
\STATE{$\mathcal{B}^{\tau + 1} := \lbrace \lambda \in \mathcal{B}^{\tau} \ | \ \lambda \geq \hat{\gamma}^{\tau} \rbrace$}
\STATE{Determine new bounds: $\lambda_{\downarrow}^{\tau + 1} := \hat{\gamma}^{\tau}$; $\lambda_{\uparrow}^{\tau + 1} := \lambda_{\uparrow}^{\tau}$}
\ENDIF
\FOR{$j\in \mathcal{M}$}
\STATE{$\mathcal{I}_j^{\tau + 1} := \mathcal{I}_j^{\tau}$; $Y_j^{\tau + 1} := Y_j^{\tau}$; $\bar{A}_j^{\tau + 1}:=\bar{A}_j^{\tau}$; $\bar{B}_j^{\tau + 1} := \bar{B}_j^{\tau}$}
\FOR{$k \in \mathcal{I}_j^{\tau}$}
\IF{$\beta_k \leq \lambda_{\downarrow}^{\tau + 1}$}
\STATE{Remove $k$ from $\mathcal{I}_j^{\tau + 1}$; $Y_j^{\tau + 1} = Y_j^{\tau + 1} + l_k$}
\ELSIF{$\alpha_i \leq \lambda_{\downarrow}^{\tau + 1} \leq \lambda_{\uparrow}^{\tau + 1} \leq \beta_i$}
\STATE{Remove $k$ from $\mathcal{I}_j^{\tau + 1}$; $\bar{A}_j^{\tau + 1} = \bar{A}_j^{\tau + 1} + 1/a_k$; $\bar{B}_j^{\tau + 1} = \bar{B}_j^{\tau + 1} + b_k / a_k$}
\ELSIF{$\lambda_{\uparrow}^{\tau + 1} \leq \alpha_k$}
\STATE{Remove $k$ from $\mathcal{I}_j^{\tau + 1}$; $Y_j^{\tau + 1} = Y_j^{\tau + 1} + u_k$}
\ENDIF
\ENDFOR
\ENDFOR
\STATE{$\tau = \tau + 1$}
\ENDWHILE
\STATE{Determine $\gamma$ as the single element of $\tilde{\mathcal{B}}$}
\STATE{Compute $\lambda^*$ using Equation~(\ref{eq_opt_mult}) and $x(\lambda^*)$ using Equation~(\ref{eq_x})}
\RETURN
\end{algorithmic}
\end{multicols}
\end{algorithm}

\subsection{Complexity results for special cases and related problems}
\label{sec_complexity}

In this section, we use Algorithms~\ref{alg_01} and~\ref{alg_02} and the complexity results in Theorems~\ref{th_alg_01} and~\ref{th_alg_02} to state complexity results for several special cases of Problem~\ref{prob} and related problems. Some of these cases are of interest for the problem of scheduling three-phase electric vehicle charging, whereas other cases may be of independent interest.

The first special case is when all subsets $\mathcal{N}_j$ have the same cardinality, i.e., $|\mathcal{N}_j| = C$ for some natural number $C$. For this case, we can show that, given~$C$, the time complexity of Algorithm~\ref{alg_02} is linear. Note that this special case includes the problem of scheduling three-phase electric vehicle charging that we introduced in Section~\ref{sec_prob_EV} (see also Table~\ref{tab_01}) as we have $C=3$ in this case.
\begin{theorem}
If $n_j = C$ for all $j \in \mathcal{M}$ and $C \in \mathbb{N}$, the time complexity of Algorithm~\ref{alg_02} is $O(n \log C)$.
\end{theorem}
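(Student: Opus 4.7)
The plan is to revisit the complexity analysis of Algorithm~\ref{alg_02} given in Theorem~\ref{th_alg_02} and refine each step under the additional assumption $n_j = C$ for all $j \in \mathcal{M}$. The four logical blocks of the algorithm are the constraint elimination (Line~2), the breakpoint computation (Line~3), the binary breakpoint search (Lines~8--46), and the final reconstruction of $\lambda^*$ and $x(\lambda^*)$ (Lines~47--48). I intend to argue block by block that each either remains $O(n)$ or improves to $O(n \log C)$, so that the overall bound becomes $O(n \log C)$.

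First, I would handle the two clearly linear blocks. The constraint elimination requires solving $2m$ instances of \ref{prob_QRA}$^j(\cdot)$; since each has only $C$ variables, each can be solved in $O(C)$ time by the algorithms cited after Lemma~\ref{lemma_el_new}, giving a total of $O(mC) = O(n)$. The final reconstruction in Lines~47--48 uses Equations~(\ref{eq_opt_mult}) and~(\ref{eq_x}) and is linear in $n$ by the same argument as in Theorem~\ref{th_alg_02}. For the binary breakpoint search, Lemma~\ref{lemma_alg_02_linear} already gives an $O(n)$ bound without any assumption on the subset sizes, so no refinement is needed there.

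The block that dominates and needs a genuine refinement is the breakpoint computation. In the proof of Theorem~\ref{th_alg_02}, Algorithm~\ref{alg_BP} was invoked once per $j \in \mathcal{M}$ with an $O(n_j \log n_j)$ cost, summing to $O(n \log n)$. Under the assumption $n_j = C$, each invocation costs $O(C \log C)$, so summing over $m$ subsets yields
\begin{equation*}
O\!\left(\sum_{j=1}^m C \log C\right) = O(m C \log C) = O(n \log C),
\end{equation*}
using $n = mC$. This is exactly the refinement already observed informally in the paragraph following Algorithm~\ref{alg_BP}.

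Combining the four bounds, the total running time is $O(n) + O(n \log C) + O(n) + O(n) = O(n \log C)$, which proves the theorem. The only mildly subtle point is confirming that Lemma~\ref{lemma_alg_02_linear} still gives $O(n)$ and is not secretly hiding a $\log n_j$ factor; since its proof bounds each iteration by $O(|\mathcal{B}^\tau|)$ and geometric summation over the halving breakpoint sets, this is independent of the subset sizes, so no additional work is required.
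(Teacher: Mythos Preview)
Your proof is correct and follows essentially the same approach as the paper: the paper's proof simply notes that the only non-linear step is the breakpoint computation and then refines $\sum_{j=1}^m n_j \log n_j$ to $O(mC\log C)=O(n\log C)$, which is exactly the calculation you carry out in your third block. Your version is more explicit in enumerating the four blocks, but the argument and the key refinement are identical.
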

\begin{proof}
The only part of the algorithm that does not have a linear time complexity is the computation of the breakpoints, which needs $O(n \log n)$ operations for the general Problem~\ref{prob}. However, when $n_j = C$, the complexity analysis can be refined to $O(\sum_{j=1}^m n_j \log n_j) = O(\sum_{j=1}^m C \log C) = O(mC \log C) = O(n \log C)$. It follows that the time complexity of Algorithm~\ref{alg_02} for this special case is $O(n \log C)$.
\end{proof}

Next, we focus on the special case where $w_j = 0$ for all $j \in \mathcal{M}$, i.e., the quadratic \emph{separable} resource allocation problem with generalized bound constraints. With regard to three-phase EV charging, this case represents the situation where the only objective is to minimize peak consumption and we do not consider minimization of load unbalance. This case can be solved in $O(n)$ time.
\begin{theorem}
If $w_j = 0$ for all $j \in \mathcal{M}$, Problem~\ref{prob} can be solved in $O(n)$ time.
\label{th_noweight}
\end{theorem}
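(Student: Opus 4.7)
The plan is to reduce the problem, in the case $w_j=0$ for all $j$, to a simple quadratic separable resource allocation problem, and then invoke an existing linear-time algorithm for that problem. The key observation is that the $O(n \log n)$ bottleneck in both Algorithms~\ref{alg_01} and~\ref{alg_02} came entirely from computing (and sorting) breakpoints via Algorithm~\ref{alg_BP}; once $w_j=0$, this sorting step becomes unnecessary because the breakpoints decouple across variables.

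First, I would apply the constraint elimination from Section~\ref{sec_replace}. By Lemma~\ref{lemma_el_new}, the generalized bound constraints~(\ref{eq_p_03}) can be replaced by box constraints $\underline{x}_i \leq x_i \leq \bar{x}_i$ obtained from the $2m$ subproblems \ref{prob_QRA}$^j(L_j)$ and \ref{prob_QRA}$^j(U_j)$. Each such subproblem is a simple quadratic RAP of size $n_j$ and can be solved in $O(n_j)$ time using, e.g., the algorithm of Kiwiel~\cite{Kiwiel2008a}. Summing over $j$, this preprocessing step costs $O(\sum_{j=1}^m n_j) = O(n)$ time.

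After this elimination, and since $w_j=0$ makes the objective fully separable, what remains is precisely a quadratic simple separable resource allocation problem with box constraints and a single equality constraint. This problem class is well-studied and admits $O(n)$ algorithms, e.g., those of Brucker or Kiwiel~\cite{Kiwiel2008a}; invoking any such algorithm solves the reduced problem in $O(n)$ time. Combining the $O(n)$ preprocessing with the $O(n)$ solve yields the claimed $O(n)$ overall complexity.

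There is no real obstacle here, since the heavy lifting has already been done: the structural reduction in Lemma~\ref{lemma_el_new} disposes of the generalized bound constraints in linear time, and the remaining subproblem is classical. The only point worth being careful about is ensuring that setting $w_j=0$ is consistent with Property~\ref{prop_01} (it trivially is, since $1 + 0 \cdot \sum_{i'\in\mathcal{N}_j} 1/a_{i'} = 1 > 0$), so the constraint elimination argument and the $O(n)$ separable RAP algorithm both apply without modification.
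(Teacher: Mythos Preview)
Your proposal is correct and follows essentially the same approach as the paper: eliminate the generalized bound constraints via Lemma~\ref{lemma_el_new} in $O(n)$ time, observe that with $w_j=0$ the remaining problem is a separable quadratic RAP, and solve that in $O(n)$ time. Your additional remarks (the verification of Property~\ref{prop_01} and the observation that the sorting-based breakpoint computation is what was causing the $O(n\log n)$ bottleneck) are accurate elaborations but not needed for the argument itself.
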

\begin{proof}
After elimination of the generalized bound constraints~(\ref{eq_p_03}) according to the constraint simplification procedure described in Section~\ref{sec_replace}, the remaining problem is a quadratic separable resource allocation problem since $w_j = 0$ for each $j \in \mathcal{M}$. Thus, we can solve this problem in $O(n)$ time, which implies that we can solve also the whole Problem~\ref{prob} in $O(n)$ time.
\end{proof}

Finally, we consider the integer version of Problem~\ref{prob}, i.e., the problem with the additional constraint that $x_i \in \mathbb{Z}$ for all $i \in \mathcal{N}$. If $w_j = 0$ for all $j \in \mathcal{M}$, we can solve the integer version in $O(n)$ time:
\begin{theorem}
If $w_j = 0$ for all $j \in \mathcal{M}$, the integer version of Problem~\ref{prob} can be solved in $O(n)$ time.
\label{th_int}
\end{theorem}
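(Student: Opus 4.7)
The plan is to mimic the strategy used for the continuous case in Theorem~\ref{th_noweight}: first reduce the problem to a quadratic separable resource allocation problem with only single-variable bounds via constraint elimination, then invoke an existing linear-time algorithm for the resulting problem. The only change is that every step has to be done with integer arithmetic.

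First I would establish an integer analogue of Lemma~\ref{lemma_el_new}. Concretely, for each $j \in \mathcal{M}$, consider the integer version of \ref{prob_QRA}$^j(L_j)$ and \ref{prob_QRA}$^j(U_j)$, i.e., the problems with the additional constraint $x_i \in \mathbb{Z}$ for $i \in \mathcal{N}_j$. Let $\underline{x}^j$ and $\bar{x}^j$ be optimal solutions to these two integer subproblems. I would then argue, following the same exchange argument as in the proof of Lemma~\ref{lemma_el_new} (which is itself adapted from Lemma~6.2.1 in \cite{Hochbaum1994}), that there exists an optimal integer solution $x^*$ to the integer version of Problem~\ref{prob} satisfying $\underline{x}_i \leq x^*_i \leq \bar{x}_i$ for each $i \in \mathcal{N}_j$. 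The exchange argument goes through unchanged in the integer setting because increments/decrements by one unit preserve integrality and the separable quadratic cost of each component behaves monotonically on the relevant side of its minimum.

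Second, I would bound the cost of the two preprocessing steps. Each integer subproblem is an instance of the integer quadratic separable simple resource allocation problem, which can be solved in $O(n_j)$ time by the algorithm of \cite{Hochbaum1994} (or any of the other linear-time integer RAP algorithms, see the survey \cite{Patriksson2015}). Summed over $j \in \mathcal{M}$ this gives a total of $O(n)$ time for constraint elimination. After elimination, the remaining problem is the integer quadratic separable resource allocation problem
\begin{equation*}
\min \Bigl\{ \sum_{i=1}^n \bigl( \tfrac{1}{2} a_i x_i^2 + b_i x_i \bigr) : \sum_{i=1}^n x_i = R, \ \underline{x}_i \leq x_i \leq \bar{x}_i, \ x_i \in \mathbb{Z} \Bigr\},
\end{equation*}
which again can be solved in $O(n)$ time by the same class of algorithms.

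The main obstacle is the first step: making sure the exchange/pairing argument behind Lemma~\ref{lemma_el_new} still yields feasible \emph{integer} moves. In the continuous proof one perturbs two variables by an infinitesimal $\varepsilon$; in the integer case one must perform the perturbation by $\pm 1$ and verify that after the swap both variables remain within their box constraints and within the $[\underline{x}_i,\bar{x}_i]$ bounds. Because the bounds are themselves integer (all data and the candidate solutions are integer), this goes through, but it needs to be stated carefully. Once that is in place, the $O(n)$ bound is immediate from adding the constraint-elimination cost and the linear-time integer separable-RAP solver.
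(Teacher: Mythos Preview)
Your proposal is correct and follows essentially the same route as the paper: the paper also observes that the exchange argument behind Lemma~\ref{lemma_el_new} survives in the integer setting (taking $\epsilon = 1$, since all quantities are integers and hence $\bar{\epsilon} \geq 1$), then solves the $2m$ integer subproblems \ref{prob_QRA}$^j(L_j)$, \ref{prob_QRA}$^j(U_j)$ and the resulting integer simple RAP each in linear time. The only cosmetic difference is the reference used for the linear-time integer RAP solver (the paper cites \cite{Ibaraki1988} rather than \cite{Hochbaum1994}).
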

\begin{proof}
Without loss of generality, we assume that $l,u \in \mathbb{Z}^n$, $L,U \in \mathbb{Z}^m$, and $R \in \mathbb{Z}$. Note that all steps and statements in the proof of Lemma~\ref{lemma_el_new} are valid for the integer version of Problem~\ref{prob} since $\bar{\epsilon} > 1$ and we can choose $\epsilon = 1$ to obtain feasible solutions $x'$ and $(\underline{x}')^j$. Thus, to solve this version, we are required to solve the $2m$ subproblems \ref{prob_QRA}$^j(L_j)$ and \ref{prob_QRA}$^j(U_j)$ and one instance of the quadratic simple resource allocation problem with $n$ variables (see also the proof of Theorem~\ref{th_noweight}) as integer resource allocation problems. Since quadratic simple resource allocation problems with integer variables can be solved in linear time. (see, e.g., \cite[Sections 4.6 and 4.7]{Ibaraki1988}), we can solve these $2m -1$ problems in $O(n)$ time.
\end{proof}

If $w_j \geq 0$ for all $j \in \mathcal{M}$, the integer version can be solved in $O(n^2)$ time \cite{Moriguchi2011}. Finally, we conjecture that the integer version of the general Problem~\ref{prob}, i.e., instances that satisfy Property~\ref{prop_01}, is solvable in strongly polynomial time, but leave this as an open question for future research.

\section{Evaluation}
\label{sec_eval}
In this section, we evaluate the two algorithms presented in Sections~\ref{sec_alg} and~\ref{sec_linear}. We carry out two types of evaluation. First, we evaluate the performance of our algorithms on realistic instances of the EV charging problem \ref{prob_EV} that we introduced in Section~\ref{sec_prob_EV}. Second, to assess the practical scalability of our algorithms, we evaluate them on problem instances with varying numbers~$m$ of generalized bound constraints and numbers~$C$ of variables associated with a given constraint. Since for Problem~\ref{prob} no other tailored algorithms are available, we compare the performance of our algorithms to that of the commercial solver MOSEK \cite{Mosek2019}. 

In Section~\ref{sec_instance}, we describe in more detail the problem instances that we use in the evaluations. Subsequently, in Section~\ref{sec_implement}, we discuss several implementation details. Finally, in Section~\ref{sec_results}, we present and discuss the evaluation results.

\subsection{Problem instances}
\label{sec_instance}

We carry out two types of evaluations. First, we evaluate the performance of our algorithms on instances of Problem~\ref{prob_EV}. For this, we consider a setting wherein an EV is empty and available for residential charging from 18:00h and must be fully charged by 8:00h on the next day. This charging horizon of 14 hours is divided into 15-minute time intervals, meaning that $m = 56$. For the power consumption constraints of the EV, we follow the balancing framework in \cite{Weckx2015} and use the Tesla model~3 as a reference EV \cite{Tesla2020}. This means that we choose $R = 4 \times 40,000 = 160,000$~(Wh), $L_j = 0$~(W) and $U_j = 11,500$~(W) for each $j \in \mathcal{M}$, and $l_i = -\frac{11,500}{3}$~(W) and $u_i = \frac{11,500}{3}$~(W) for each $i \in \mathcal{N}$. We simulate 100 charging sessions, where each session corresponds to charging on a different day. As input for this, we use real power consumption measurement data of 40 households for 100 consecutive days that were obtained in the field test described in \cite{Hoogsteen2017b}. More precisely, we distribute the power consumption profiles of these 40 households randomly over the three phases and, for a given day, choose each parameter $z_{j,p}$ as the sum of the power consumption during interval~$j$ of all households that have been assigned to phase~$p$. To study the influence of different trade-offs between the two objectives (minimizing peak consumption and minimizing load unbalance) on the time required to solve the problem, we simulate each of the 100 charging sessions using three different combinations of the weights~$W_1$ and~$W_2$, namely $(W_1,W_2) \in \lbrace (1,1), (1,100),(100,1) \rbrace$.

Second, we assess the scalability of our algorithms. For this, we focus on the case where the subset sizes $n_j$ are equal to some positive integer~$C$. We generate random instances for a number of fixed values of $C$ and $m$. Table~\ref{tab_scale} shows these fixed values of $C$ and $m$ and for each problem parameter the uniform distribution from which the parameter values are drawn. For each combination of $C$ and $m$, we generate 10 instances according to the given distributions. The distribution of each weight $w_j$ is chosen such that the resulting problem instances satisfy Property~\ref{prop_01}, which ensures by Lemma~\ref{lemma_posdef} that their objective functions are strictly convex. The distributions of the values $L_j$ and $U_j$ are chosen such that none of the generalized bound constraints~(\ref{eq_p_03}) is redundant. As a consequence, all of these constraints need to be replaced according to the constraint simplification procedure in Section~\ref{sec_replace}. Thereby, we maximize the time that Algorithms~\ref{alg_01} and~\ref{alg_02} require for this step and thus improve the fairness of the comparison with MOSEK.

\begin{table}[ht!]
\centering
\begin{tabular}{r | l}
\toprule
Parameter & Values \\
\midrule
$C$ & $\lbrace 1;2;5;10;20;50;100;200;500;1,000 \rbrace$ \\
$m$ & $\lbrace 1;2;5;10;20;50;100;200;500;1,000 \rbrace$ \\
$a_j$ & $\sim U(0,10)$ \\
$b_i$ & $\sim U(-10,10)$ \\
$w_j$ & $\sim U \left( - \frac{1}{\sum_{i \in \mathcal{N}_j} \frac{1}{a_j}},  - \frac{1}{\sum_{i \in \mathcal{N}_j} \frac{1}{a_j}} + 10 \right)$  \\
$l_i$ & $\sim U(-10,0)$ \\
$u_i$ & $\sim U(0,10)$ \\
$L_j$ & $\sim U \left( \sum_{i \in \mathcal{N}_j} l_i , 0.8  \sum_{i \in \mathcal{N}_j} l_i \right)$\\
$U_j$ & $\sim U \left( 0.8 \sum_{i \in \mathcal{N}_j} u_i, \sum_{i \in \mathcal{N}_j} u_i \right)$ \\
$R$ & $\sim U \left( \sum_{j = 1}^m L_j , \sum_{j=1}^m U_j \right)$\\
\bottomrule
\end{tabular}
\caption{Parameter choices for the scalability evaluation.}
\label{tab_scale}
\end{table}

\subsection{Implementation details}
\label{sec_implement}
We implemented our algorithms in Python (version 3.5) to integrate them into DEMKIT, an existing simulation tool for DEM research \cite{Hoogsteen2019}. For solving the subproblems \ref{prob_QRA}$^j(L_j)$ and \ref{prob_QRA}$^j(U_j)$ in Line~2 of both Algorithms~\ref{alg_01} and~\ref{alg_02}, we implement a sophisticated version of the sequential breakpoint search algorithm in \cite{Helgason1980} that allows us to solve both subproblems simultaneously and approximately twice as fast as the original sequential breakpoint search algorithm. Preliminary testing has shown that this algorithm is in general faster than the linear-time algorithms in, e.g., \cite{Kiwiel2008a}, despite its worse time complexity of $O(n_j \log n_j)$. Furthermore, in Algorithm~\ref{alg_02}, we compute the median of a breakpoint set in the same way as in Algorithm~\ref{alg_01}, namely by sorting the original breakpoint set and retrieving the desired breakpoints in $O(1)$ time (see also Section~\ref{sec_alg}). The reason for this is that linear-time algorithms for median finding such as in \cite{Blum1973} are in general slower than alternative sampling-or sorting-based approaches (see, e.g., \cite{Alexandrescu2017}).

In both algorithms, we could reduce the time complexity of sorting all breakpoints from $O(n \log n)$ to $O(n \log m)$ by using a multi-way merging algorithm (see, e.g., \cite{Knuth1998}) to merge the $2m$ sorted lists of breakpoints. However, preliminary testing has shown that in both algorithms the time needed for sorting the breakpoints using a standard sorting algorithm is at least one order of magnitude smaller than the time needed for computing the breakpoints and carrying out the breakpoint search. Thus, we have chosen not to use a multi-way merging algorithm to simplify the implementation of the algorithms without significantly increasing the overall execution time.

\subsection{Results}
\label{sec_results}
In this section we present and discuss the results of the evaluation as described in Section~\ref{sec_instance}. All simulations and computations are executed on a 2.60 GHz Dell Inspiron 15 with an Intel Core i7-6700HQ CPU and 16 GB of RAM.

First, we focus on the performance of our algorithms on the instances of Problem~\ref{prob_EV}. Table~\ref{tab_3phase_results} shows the average execution times of our algorithms and MOSEK for each combination of weights. Moreover, Figure~\ref{plot_3phase_ratio} shows for each combination of weights the boxplots of the ratios between the execution times of each combination of algorithms. The results in Figure~\ref{plot_3phase_ratio} indicate that our algorithms solve realistic instances of Problem~\ref{prob_EV} four to five times as fast as MOSEK for each weight combination. Moreover, Algorithm~\ref{alg_02} appears to be slightly faster than Algorithm~\ref{alg_01}, although the difference in their execution times is less than 4\% of the execution time of Algorithm~\ref{alg_02} for the majority of the problem instances. The results in both Table~\ref{tab_3phase_results} and Figure~\ref{plot_3phase_ratio} suggest that the choice of weights has little to no effect on the execution times of both our algorithms and MOSEK. The results in Table~\ref{tab_3phase_results} indicate that our algorithms can solve realistic instances of Problem~\ref{prob_EV} in the order of milliseconds. This is significantly lower than common speed and delay requirements for communication networks in DEM systems \cite{Deshpande2011}. Thus, our algorithms will most likely not be the (computational) bottleneck in DEM systems and are therefore suitable for integration in such systems. 

\begin{table}[ht!]
\centering
\begin{tabular}{r | rrr}
\toprule
Weight combination & Algorithm~\ref{alg_01} & Algorithm~\ref{alg_02} & MOSEK \\
\midrule
$(1,1)$ & $4.64 \cdot 10^{-3}$ & $4.89 \cdot 10^{-3}$ & $2.33 \cdot 10^{-2}$ \\
$(1,100)$ & $4.69 \cdot 10^{-3}$ & $4.94 \cdot 10^{-3}$ & $2.11 \cdot 10^{-2}$ \\
$(100,1)$ & $4.71 \cdot 10^{-3}$ & $4.86 \cdot 10^{-3}$ & $2.07 \cdot 10^{-2}$ \\
\bottomrule
\end{tabular}
\caption{Average execution times of Algorithms~\ref{alg_01} and~\ref{alg_02} and MOSEK for each combination of weights.}
\label{tab_3phase_results}
\end{table}

\begin{figure}[ht!]
\centering
\caption{Boxplots of the ratios of the execution times between MOSEK and Algorithm~\ref{alg_01} $\left(\frac{\text{MOS.}}{\ref{alg_01}} \right)$, MOSEK and Algorithm~\ref{alg_02} $\left( \frac{\text{MOS.}}{\ref{alg_02}} \right)$, and Algorithms~\ref{alg_02} and~\ref{alg_01} $\left( \frac{\ref{alg_02}}{\ref{alg_01}} \right)$, and MOSEK for each combination of weights.}
\label{plot_3phase_ratio}
\subfloat[$(W_1,W_2) = (1,1)$.]{
\includegraphics{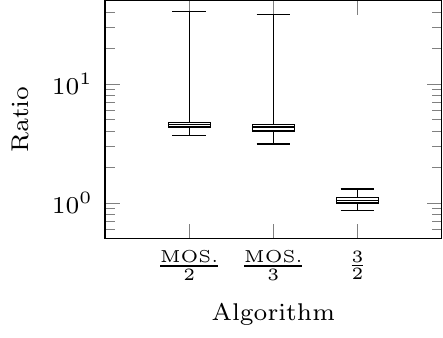}}
\hspace{0.005\textwidth}%
\subfloat[$(W_1,W_2) = (1,100)$.]{
\includegraphics{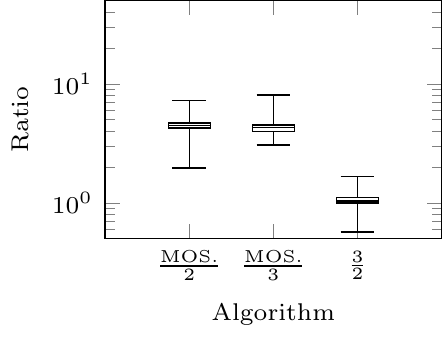}}
\hspace{0.005\textwidth}%
\subfloat[$(W_1,W_2) = (100,1)$.]{
\includegraphics{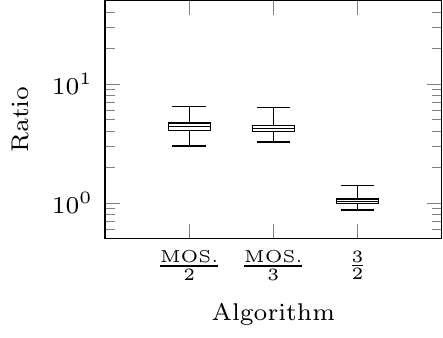}}
\end{figure}

Second, we discuss the results of the scalability evaluation. For this, we first compare the performance of the two different breakpoint search approaches, since this is the only aspect in which Algorithms~\ref{alg_01} and~\ref{alg_02} are different. To this end, we show in Figure~\ref{plot_binary} for each combination of $C$ and $m$ the boxplot of ratios between the execution times of the sequential breakpoint search in Algorithm~\ref{alg_01} and of the binary breakpoint search in Algorithm~\ref{alg_02}, i.e., the execution time of the breakpoint search procedure of Algorithm~\ref{alg_01} divided by that of Algorithm~\ref{alg_02}. Moreover, Figure~\ref{plot_binary_total} shows for each combination of $C$ and $m$ the boxplot of ratios between the overall execution times of Algorithms~\ref{alg_01} and~\ref{alg_02}. The results in Figure~\ref{plot_binary} indicate that for $C \leq 10$ the ratios regarding the breakpoint search procedures decrease significantly as $m$ increases. For these values of~$C$, most of these ratios are greater than 1 when $m \leq 5$ and smaller than 1 when $m \geq 10$. This suggests that the binary breakpoint search procedure is faster than the sequential breakpoint search procedure when $m \geq 10$. For $C > 10$, the relation between the ratios and~$m$ is less clear. However, for each of these values of~$C$, most of the ratios are greater than 1 for almost every value of~$m$, which suggests that the binary breakpoint procedure in general outperforms the sequential breakpoint procedure.

\begin{figure}[ht!]
\centering
\caption{Boxplots of ratios between execution time of the breakpoint search procedures of Algorithms~\ref{alg_01} and~\ref{alg_02}, i.e., the execution time of the breakpoint search procedure of Algorithm~\ref{alg_01} divided by that of Algorithm~\ref{alg_02}}
\label{plot_binary}
\subfloat[$C=1$.]{\includegraphics{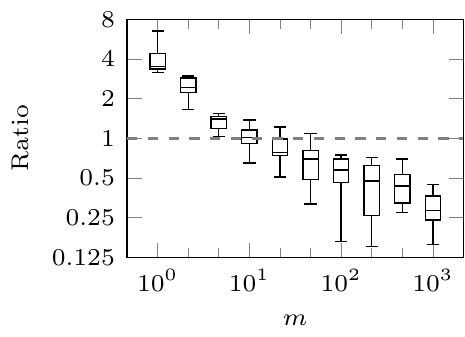}}
\hspace{0.005\textwidth}%
\subfloat[$C=2$.]{\includegraphics{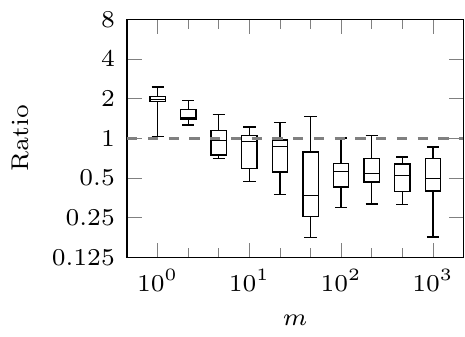}}
\hspace{0.005\textwidth}%
\subfloat[$C=5$.]{\includegraphics{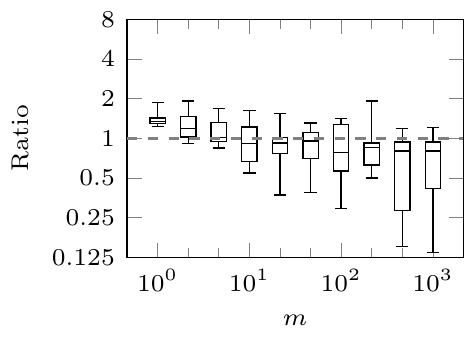}}
\hspace{0.005\textwidth}%

\subfloat[$C=10$.]{\includegraphics{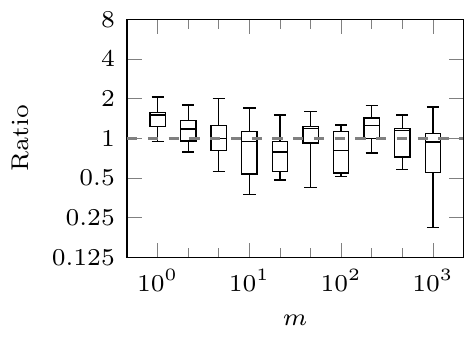}}
\hspace{0.005\textwidth}%
\subfloat[$C=20$.]{\includegraphics{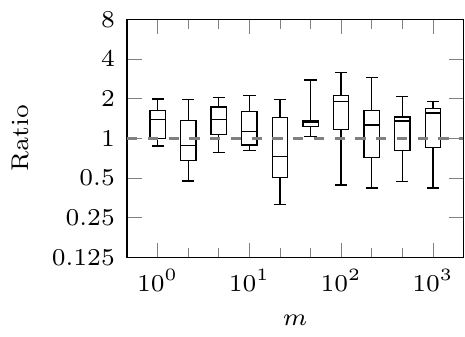}}
\hspace{0.005\textwidth}%
\subfloat[$C=50$.]{\includegraphics{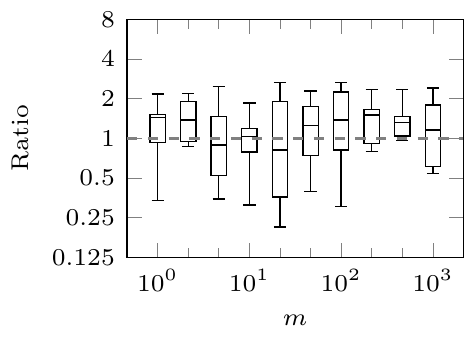}}
\hspace{0.005\textwidth}%

\subfloat[$C=100$.]{\includegraphics{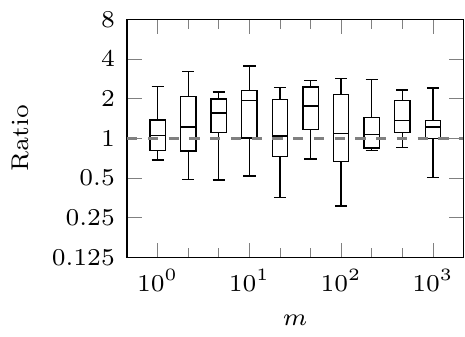}}
\hspace{0.005\textwidth}%
\subfloat[$C=200$.]{\includegraphics{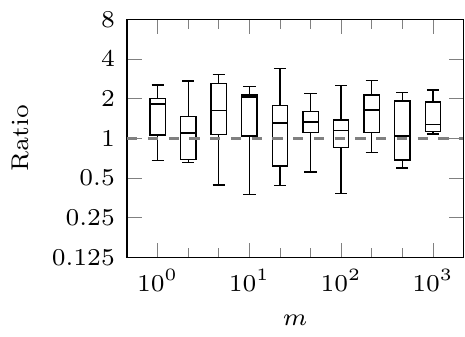}}
\hspace{0.005\textwidth}%
\subfloat[$C=500$.]{\includegraphics{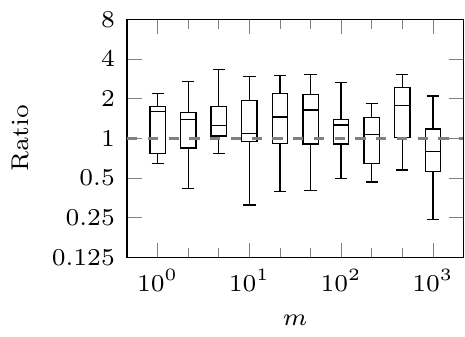}}
\hspace{0.005\textwidth}%

\subfloat[$C=1,000$.]{\includegraphics{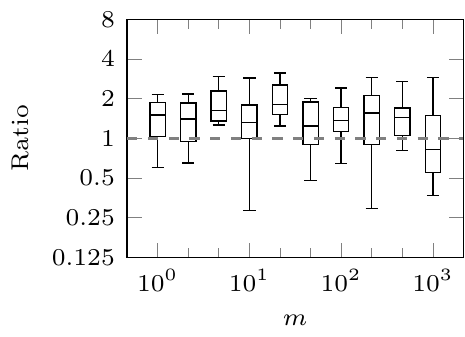}}
\end{figure}

\begin{figure}[ht!]
\centering
\caption{Boxplots of ratios between execution times of Algorithms~\ref{alg_01} and~\ref{alg_02}, i.e., the execution time of Algorithm~\ref{alg_01} divided by that of Algorithm~\ref{alg_02}}
\label{plot_binary_total}
\subfloat[$C=1$.]{\includegraphics{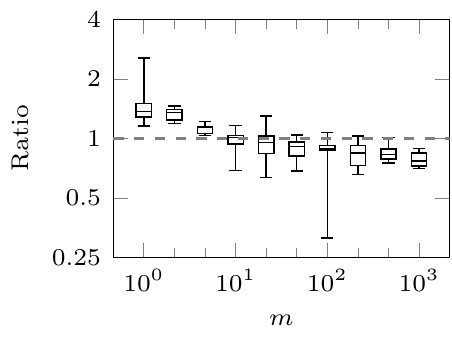}}
\hspace{0.005\textwidth}%
\subfloat[$C=2$.]{\includegraphics{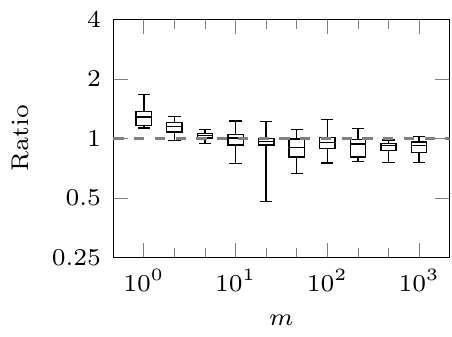}}
\hspace{0.005\textwidth}%
\subfloat[$C=5$.]{\includegraphics{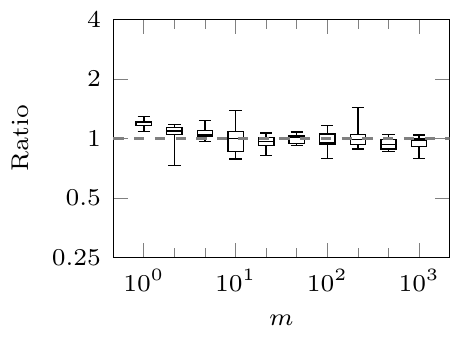}}
\hspace{0.005\textwidth}%

\subfloat[$C=10$.]{\includegraphics{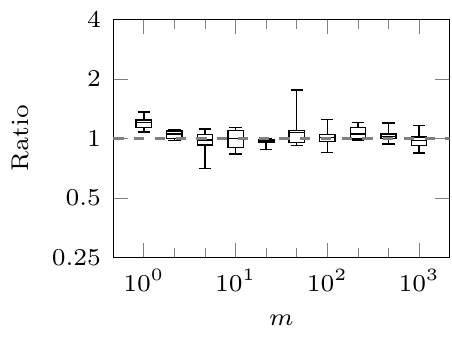}}
\hspace{0.005\textwidth}%
\subfloat[$C=20$.]{\includegraphics{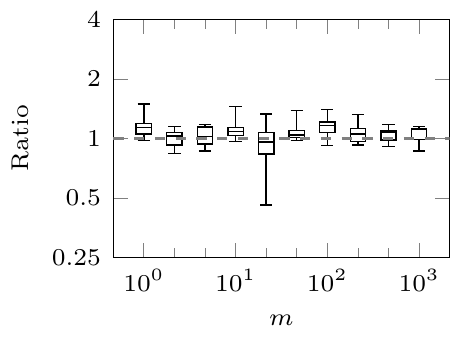}}
\hspace{0.005\textwidth}%
\subfloat[$C=50$.]{\includegraphics{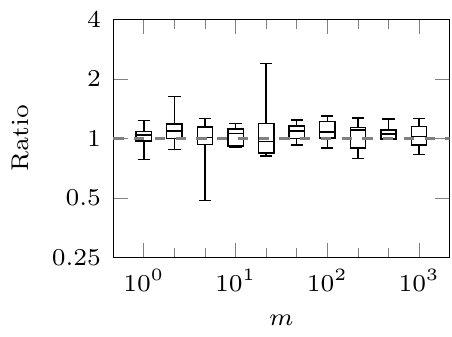}}
\hspace{0.005\textwidth}%

\subfloat[$C=100$.]{\includegraphics{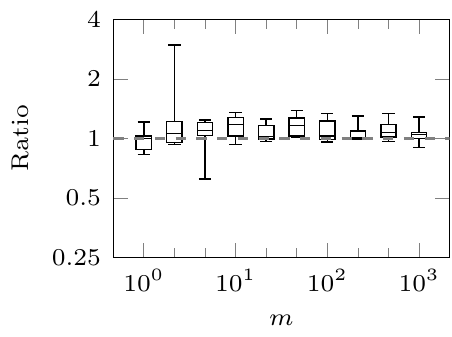}}
\hspace{0.005\textwidth}%
\subfloat[$C=200$.]{\includegraphics{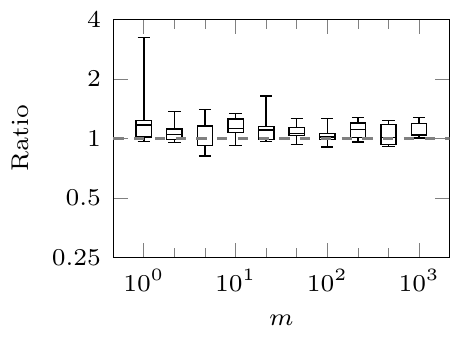}}
\hspace{0.005\textwidth}%
\subfloat[$C=500$.]{\includegraphics{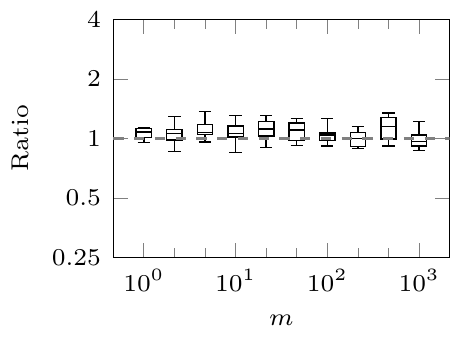}}
\hspace{0.005\textwidth}%

\subfloat[$C=1,000$.]{\includegraphics{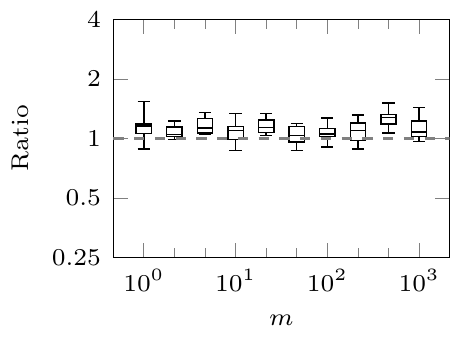}}
\end{figure}

It should be noted that the differences in execution time of the breakpoint searches of Algorithms~\ref{alg_01} and~\ref{alg_02} are less than an order of magnitude. Since the breakpoint search is the only aspect in which the algorithms differ, we expect that the differences in execution time of the entire algorithms are even less. This is confirmed by the results in Figure~\ref{plot_binary_total}, i.e., in almost all cases, the difference in exectuion time between the two algorithms is significantly less than a factor~2. However, the behavior of these ratios is similar to that of those in Figure~\ref{plot_binary}. For example, for $C \leq 10$, most ratios are larger than 1 when $m \leq 5$ and smaller than 1 when $m \geq 10$, whereas for $C > 10$ most ratios are greater than 1. This suggests that Algorithm~\ref{alg_02} is in general faster than Algorithm~\ref{alg_01} unless $C \leq 10$ and $m \leq 5$.

Finally, we compare the performance of our algorithms to MOSEK. To this end, Figure~\ref{plot_avg_compare} shows for each combination of $C$ and $m$ the execution time of Algorithm~\ref{alg_02} and MOSEK on each problem instance. We do not plot the execution times of Algorithm~\ref{alg_01} in this figure, since the differences in execution time between Algorithms~\ref{alg_01} and~\ref{alg_02} are so small that plotting them together in the same figure would unnecessarily obscure the results. Furthermore, Table~\ref{tab_power_law} shows the fitted power laws for Algorithms~\ref{alg_01} and~\ref{alg_02}, i.e., for each $C$, we fit the function $f(m) = c_1 \cdot m^{c_2}$ to the execution times corresponding to $C$. Note that for $C=1,000$, MOSEK was not able to solve any of the instances for $m=500$ and $m=1,000$ due to out-of-memory errors. Finally, to provide additional insight into the reported execution times, Tables~\ref{tab_avg_alg01}-\ref{tab_avg_mosek} in Appendix~\ref{sec_app_results_tab} show for each combination of $C$ and $m$ the average execution time of Algorithms~\ref{alg_01} and~\ref{alg_02} and MOSEK respectively.

The power laws in Figure~\ref{plot_avg_compare} and Table~\ref{tab_power_law} suggest that the execution time of Algorithms~\ref{alg_01} and~\ref{alg_02} grows linearly as~$m$ increases, i.e., the exponents $c_2$ in Table~\ref{tab_power_law} are close to one. This observation is consistent with the theoretical worst-case complexity of Algorithm~\ref{alg_02}, which is $O(n \log C) = O(m C\log C)$ and demonstrates its practical scalability. On the other hand, the execution time of MOSEK does not seem to behave polynomially. Given the execution times of MOSEK for $C \leq 50$ in Figures~\ref{plot_avg_compare}(a)-(f), this is most likely due to the initialization time of MOSEK, which for smaller problem instances is relatively large compared to the actual time required by the internal solver to solve the corresponding instance. As a consequence, Algorithms~\ref{alg_01} and~\ref{alg_02} are at least one order of magnitude faster for instances with $C \leq 50$ and $m \leq 10$.

For $C \geq 100$, the results in Figure~\ref{plot_avg_compare} and Tables~\ref{tab_avg_alg01}-\ref{tab_avg_mosek} indicate that Algorithms~\ref{alg_01} and~\ref{alg_02} are at least one order of magnitude faster than MOSEK regardless of~$m$. In fact, for $C=1,000$, both our algorithms are even two orders of magnitude faster. In this case, our algorithms solve all instances with $m=500$ and $m=1,000$ in less than 16 seconds (Algorithm~\ref{alg_01}) and 12 seconds (Algorithm~\ref{alg_02}), whereas MOSEK was not able to compute a solution due to out-of-memory errors.

\begin{figure}[ht!]
\centering
\caption{Execution times of Algorithm~\ref{alg_02} (circles, black) and MOSEK (triangles, gray).}
\label{plot_avg_compare}
\subfloat[$C=1$.]{\includegraphics{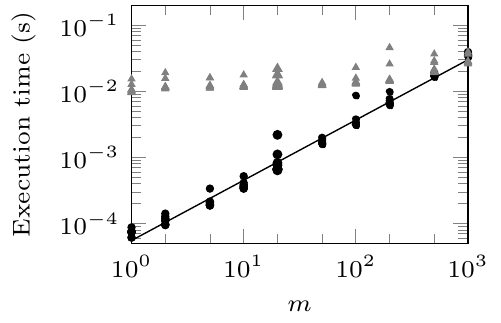}}
\hspace{0.005\textwidth}%
\subfloat[$C=2$.]{\includegraphics{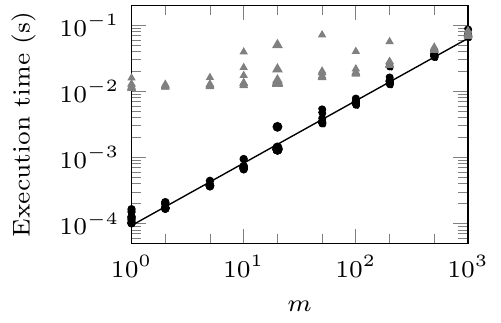}}
\hspace{0.005\textwidth}%
\subfloat[$C=5$.]{\includegraphics{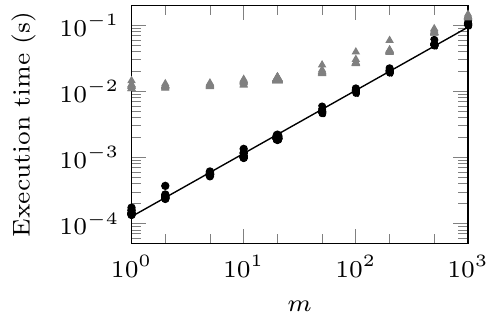}}
\hspace{0.005\textwidth}%

\subfloat[$C=10$.]{\includegraphics{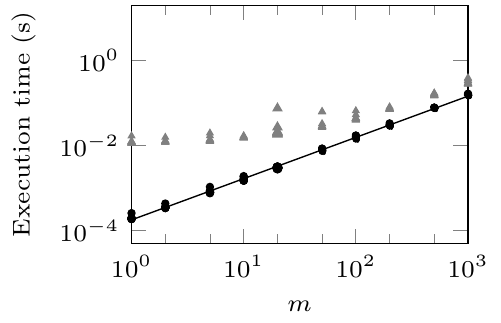}}
\hspace{0.005\textwidth}%
\subfloat[$C=20$.]{\includegraphics{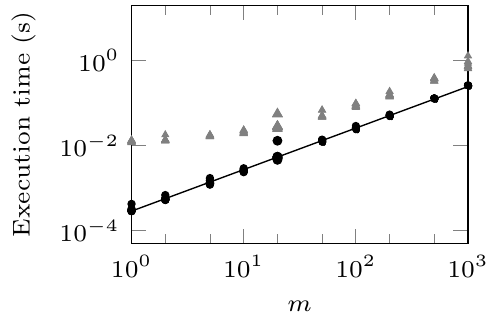}}
\hspace{0.005\textwidth}%
\subfloat[$C=50$.]{\includegraphics{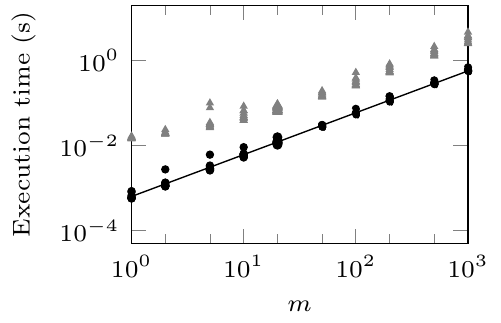}}
\hspace{0.005\textwidth}%

\subfloat[$C=100$.]{\includegraphics{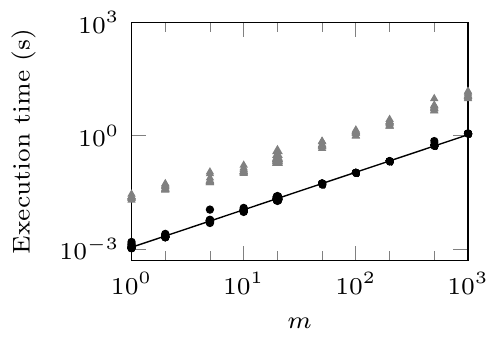}}
\hspace{0.005\textwidth}%
\subfloat[$C=200$.]{\includegraphics{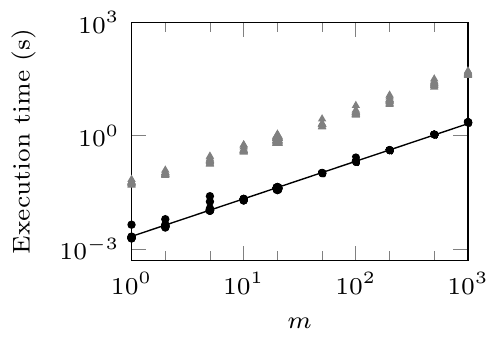}}
\hspace{0.005\textwidth}%
\subfloat[$C=500$.]{\includegraphics{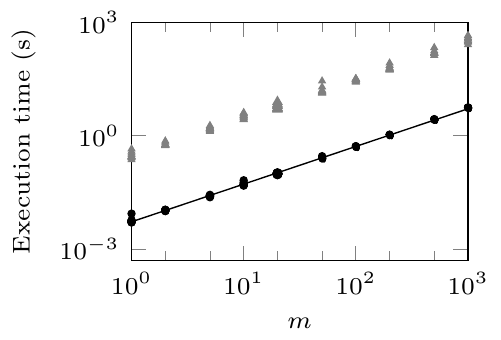}}
\hspace{0.005\textwidth}%

\subfloat[$C=1,000$.]{\includegraphics{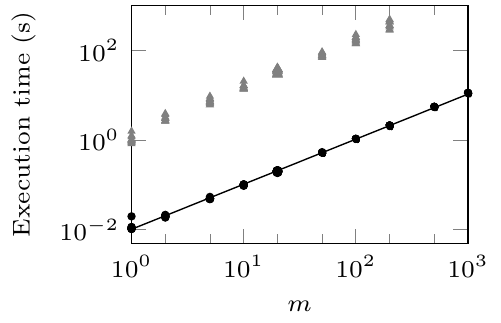}}
\hspace{0.005\textwidth}%
\subfloat[All instances.]{\includegraphics{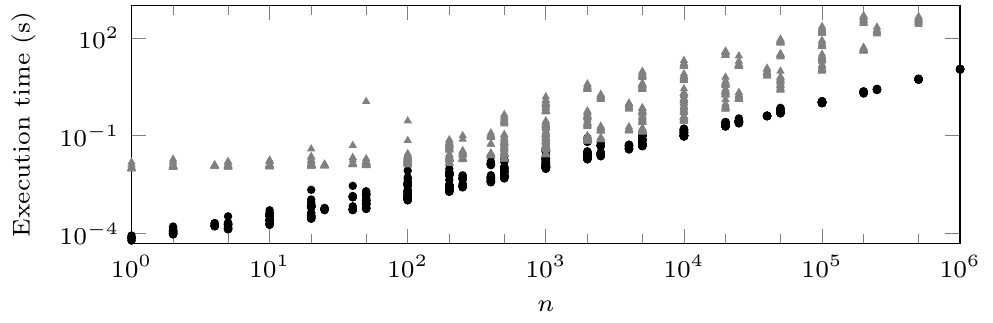}}
\hspace{0.005\textwidth}%
\end{figure}

\begin{table}[ht!]
\centering
\begin{tabular}{r | rr}
\toprule
$C$ & Algorithm~\ref{alg_01} & Algorithm~\ref{alg_02} \\
\midrule
1 & $7.27 \cdot 10^{-5} \cdot m^{0.827}$ & $5.52 \cdot 10^{-5} \cdot m^{0.912}$ \\
2 & $1.08 \cdot 10^{-4} \cdot m^{0.899}$ &	$9.31 \cdot 10^{-5} \cdot m^{0.944}$ \\
5 & $1.40 \cdot 10^{-4} \cdot m^{0.932}$ & $1.27 \cdot 10^{-4} \cdot m^{0.956} $\\
10 & $1.91 \cdot 10^{-4} \cdot m^{0.961}$ & $1.78 \cdot 10^{-4} \cdot m^{0.970}$ \\
20 & $3.06 \cdot 10^{-4} \cdot m^{0.977}$ & $2.87 \cdot 10^{-4} \cdot m^{0.977}$ \\
50 & $6.69 \cdot 10^{-4} \cdot m^{0.987}$ & $6.34 \cdot 10^{-4} \cdot m^{0.985}$ \\
100 & $1.22 \cdot 10^{-3} \cdot m^{0.993}$ & $1.11 \cdot 10^{-3} \cdot m^{0.993}$ \\
200 & $2.50 \cdot 10^{-3} \cdot m^{0.983}$ & $2.15 \cdot 10^{-3} \cdot m^{0.995}$ \\
500 & $5.77 \cdot 10^{-3} \cdot m^{0.991}$ & $5.24 \cdot 10^{-3} \cdot m^{0.998}$ \\
1,000 & $1.14 \cdot 10^{-2} \cdot m^{1.009}$ & $1.03 \cdot 10^{-2} \cdot m^{1.003}$ \\
\bottomrule
\end{tabular}
\caption{Power law regression functions for Algorithms~\ref{alg_01} and~\ref{alg_02} for each~$C$, i.e.,  the fitted functions $c_1 \cdot m^{c_2}$.}
\label{tab_power_law}
\end{table}

\section{Concluding remarks}
\label{sec_concl}

In this article, we studied a quadratic nonseparable resource allocation problem with generalized bound constraints. This problem was motivated by its application in decentralized energy management and in particular for scheduling electric vehicles (EVs) to minimize load unbalance in electricity networks. We derived two algorithms with $O(n\log n)$ time complexity for this problem, of which one runs in linear time for a subclass containing the EV scheduling problem. Numerical evaluations demonstrate the practical efficiency of our algorithms both for realistic instances of the EV scheduling problem and for instances with synthetic data. In fact, our algorithms solve problem instances with up to 1 million variables in less than 16 seconds on a personal computer and are up to 100 times faster than a standard commercial solver. This practical efficiency of our algorithms makes them suitable for the aforementioned electric vehicle scheduling problems since these problems have to be solved on embedded systems with low computational power and low memory.

This work adds a new problem to the class of quadratic nonseparable resource allocation problems that can be solved in strongly polynomial time by efficient algorithms. The question remains how this class can be extended further. Existing work on optimization under submodular constraints \cite{Hochbaum1995,Moriguchi2011} suggests that the class of nonseparable resource allocation problems where both the constraints and nonseparability are induced by a so-called \emph{laminar family} constitutes a promising direction for this extension. We expect that new efficient and practical algorithms can be obtained for these problems by combining insights from existing methodologies to solve similar problems, including minimum quadratic cost flow problems \cite{Tamir1993,Hochbaum1995}, scaling algorithms \cite{Moriguchi2011}, and monotonicity-based optimization (\cite{Vidal2018} and this article). 

With regard to the application of decentralized energy management, these algorithms can be used to solve local optimization problems of devices that are equipped with three-phase chargers other than EVs. In particular, in future research, we focus on the derivation of an algorithm for the (quadratic) nonseparable resource allocation with nested constraints since this models the problem of scheduling large-scale batteries with three-phase chargers. Such batteries are widely recognized as vital components of current and future residential distribution grids with a high infeed from renewable energy sources and integrated devices such as EVs. Therefore, this is a relevant and important direction of future research that can contribute greatly to a sustainable future energy supply.

\section*{Acknowledgments}
This research has been conducted within the SIMPS project (647.002.003) supported by NWO and Eneco.

\appendix

\section{Formulation of Problem~\ref{prob_EV}}
\label{app_obj}

In this appendix, we derive the expression in Equation~(\ref{eq_obj_phase}) for the objective of minimizing load unbalance and show that Problem~\ref{prob_EV} is an instance of Problem~\ref{prob}.

First, as a measure for load unbalance during a given interval~$j \in \mathcal{M}$, we utilize the squared 2-norm of the resulting vector of the three phase loads $q_{j,1}+z_{j,1}$, $q_{j,2} + z_{j,2}$, and $q_{j,3} + z_{j,3}$ according to the phase arrangement depicted in Figure~\ref{fig_phase_01}. This resulting vector equals
\begin{equation*}
P_j^{\text{res}} := \left[ \sum_{p=1}^3 (q_{j,p} + z_{j,p}) \cos \phi_p, \sum_{p=1}^3 (q_{j,p} + z_{j,p}) \sin \phi_p \right],
\end{equation*}
where $\phi_1$, $\phi_2$, and $\phi_3$ are the angles of the three phases. Thus, we can model the objective of minimizing unbalance by minimizing the function $\sum_{j=1}^m ||P_j^{\text{res}}||^2$, where $||\cdot||$ denotes the 2-norm on $\mathbb{R}^2$. Note that we can assume without loss of generality that the phases are arranged as depicted in Fig.~\ref{fig_phase_01}. This means that we may assume that $\phi_1 = 1 \frac{5}{6}\pi$, $\phi_2 = 1\frac{1}{6} \pi$, and $\phi_3 = \frac{1}{2} \pi$. Thus, for each $j= \in \mathcal{M}$, it follows that
\begin{align*}
||P_j^{\text{res}}||^2 &=  \left(\sum_{p=1}^3 (q_{j,p} + z_{j,p}) \cos \phi_p \right)^2 + \left(\sum_{p=1}^3 (q_{j,p} + z_{j,p}) \sin \phi_p \right)^2  \\
&=\sum_{p=1}^3 (q_{j,p} + z_{j,p})^2 (\cos^2 \phi_n + \sin^2 \phi_n) \\
& \quad
+ 2\sum_{p=1}^3 \sum_{p'=p+1}^3 (q_{j,p} + z_{j,p})(q_{j,p'} + z_{j,p'}) (\cos \phi_p \cos \phi_{p'} + \sin \phi_p \sin \phi_{p'})  \\
&= \sum_{p=1}^3 (q_{j,p} + z_{j,p})^2 + 2(q_{j,1} + z_{j,1})(q_{j,2} + z_{j,2}) \left(-\frac{3}{4} +\frac{1}{4} \right) \\
& \quad
+2(q_{j,1} + z_{j,1})(q_{j,3} + z_{j,3}) \left(0 - \frac{1}{2} \right)
+2(q_{j,2} + z_{j,2})(q_{j,3} + z_{j,3})\left( 0 - \frac{1}{2} \right) \\
&= \sum_{p=1}^3 (q_{j,p} + z_{j,p})^2 
- (q_{j,1} + z_{j,1})(q_{j,2} + z_{j,2})
- (q_{j,1} + z_{j,1})(q_{j,3} + z_{j,3})
- (q_{j,2} + z_{j,2})(q_{j,3} + z_{j,3}) \\
&= \frac{3}{2} \sum_{p=1}^3 (q_{j,p} + z_{j,p})^2 -\frac{1}{2} \left(\sum_{p=1}^3 (q_{j,p} + z_{j,p}) \right)^2.
\end{align*}

Second, to show that Problem~\ref{prob_EV} is an instance of Problem~\ref{prob}, observe that the objective function of Problem~\ref{prob_EV} can be rewritten to
\begin{align*}
& W_1 \sum_{j=1}^m \left( \sum_{p=1}^3 (q_{j,p} + z_{j,p}) \right)^2 +  W_2  \sum_{j=1}^m \left( \frac{3}{2} \sum_{p=1}^3 (q_{j,p} + z_{j,p})^2 - \frac{1}{2} \left(\sum_{p=1}^3 (q_{j,p} + z_{j,p}) \right)^2 \right) \\
=& \left(W_1 - \frac{1}{2} W_2 \right) \sum_{j=1}^m \left( \sum_{p=1}^3 (q_{j,p} + z_{j,p}) \right)^2 + \frac{3}{2} W_2  \sum_{j=1}^m  \sum_{p=1}^3 (q_{j,p} + z_{j,p})^2 \\
=& \left(W_1 - \frac{1}{2} W_2 \right) \sum_{j=1}^m \left( \sum_{p=1}^3  z_{j,p} \right)^2 + \frac{3}{2} W_2  \sum_{j=1}^m  \sum_{p=1}^3  z_{j,p}^2 +
\left(W_1 - \frac{1}{2} W_2 \right) \sum_{j=1}^m  \left(\sum_{p=1}^3  q_{j,p}\right) \sum_{p=1}^3 z_{j,p} \\
& \quad 
+ \frac{3}{2} W_2  \sum_{j=1}^m   \sum_{p=1}^3 q_{j,p} z_{j,p}
+ \left(W_1 - \frac{1}{2} W_2 \right) \sum_{j=1}^m \left( \sum_{p=1}^3  q_{j,p} \right)^2 + \frac{3}{2} W_2  \sum_{j=1}^m  \sum_{p=1}^3  q_{j,p}^2
\end{align*}
The latter expression corresponds directly with the values in Table~\ref{tab_01}.

\section{Proofs of Lemmas~\ref{lemma_posdef},~\ref{lemma_el_new}, and~\ref{lemma_mono}-\ref{lemma_xk}}

\subsection{Proof of Lemma~\ref{lemma_posdef}}
\label{sec_app_lemma_posdef}
\begin{lemma_posdef}
$H^j$ is positive definite if and only if $1 + w_j \sum_{i \in \mathcal{N}_j} 1/a_{i} > 0$.
\end{lemma_posdef}
\begin{proof}
Suppose that $H^j$ is positive definite. Then its determinant is strictly positive. Due to the special structure of $H^j$, we can rewrite its determinant to the following form by applying the matrix determinant lemma (see, e.g., \cite{Harville1997}):
\begin{equation*}
\text{det}(H^j) = \text{det}(w_j ee^{\top} + \text{diag}(a^j))
= \left( 1 + w_j \sum_{i \in \mathcal{N}_j} \frac{1}{a_{i}} \right) \text{det}(\text{diag}(a^j)).
\end{equation*}
Since $a_{i} > 0$ for all $i \in \mathcal{N}_j$, we have that $\text{det}(\text{diag}(a^j)) > 0$ and thus we also have that $1 + w_j \sum_{i \in \mathcal{N}_j} 1/a_{i} > 0$.

Now suppose that $1 + w_j \sum_{i \in \mathcal{N}_j} 1/a_{i} > 0$. We show that all leading principal minors of $H^j$ are positive, i.e., that the determinant of each upper-left submatrix of $H^j$ is positive. For this, we label the indices of $\mathcal{N}_j$ as $1,\ldots,n_j$ such that, for any $1 \leq \ell \leq n_j$, the $\ell \times \ell$ upper-left submatrix of $H^j$ is formed by the first $\ell$ rows and columns of $H^j$. Let us denote this submatrix by $H^j_{1:\ell;1:\ell}$.

To show that $\text{det}(H^j_{1:\ell;1:\ell}) > 0$, we compute this determinant by applying the matrix determinant lemma to $H^j_{1:\ell;1:\ell}$. This yields
\begin{equation*}
\text{det}(H^j_{1:\ell;1:\ell}) 
= \left( 1 + w_j \sum_{i =1}^{\ell} \frac{1}{a_{i}} \right) \prod_{i = 1}^{\ell} a_{i}.
\end{equation*}
Note that $1 + w_j \sum_{i =1}^{\ell} 1 / a_{i} > 0$ since $1 + w_j \sum_{i \in \mathcal{N}_j} 1/a_{i} > 0$ and all values~$a_{i}$ are positive. It follows that $\text{det}(H^j_{1:\ell;1:\ell}) > 0$. Since $\ell$ was chosen arbitrarily, this implies that all leading principal minors of $H^j$ are positive and thus that $H^j$ is positive definite.
\end{proof}

\subsection{Proof of Lemma~\ref{lemma_el_new}}
\label{sec_app_lemma_el_new}

\begin{lemma_el_new}
For a given $j \in \mathcal{M}$, let $\underline{x}^j := (\underline{x}_i)_{i \in \mathcal{N}}$ and $\bar{x}^j := (\bar{x}_i)_{i \in \mathcal{N}}$ be optimal solutions to \ref{prob_QRA}$^j(L_j)$ and \ref{prob_QRA}$^j(U_j)$ respectively. Then there exists an optimal solution $x^* := (x^*_i)_{i \in \mathcal{N}}$ to Problem~\ref{prob} that satisfies $\underline{x}_i \leq x^*_i \leq \bar{x}_i$ for each $i \in \mathcal{N}_j$.
\end{lemma_el_new}

\begin{proof}
We prove the validity of the lower bounds $\underline{x}_i \leq x^*_i$; the proof for the upper bounds $x^*_i \leq \bar{x}_i$ is analogous. If for a given $j \in \mathcal{M}$ there is no optimal solution $x^*$ to Problem~\ref{prob} that satisfies the bounds $\underline{x}_i \leq x^*_i \leq \bar{x}_i$ for each $i \in \mathcal{N}_j$, then choose out of all these solutions the one solution $x^*$  for which the value $d := \sum_{\ell \in \mathcal{N}_j} \max(\underline{x}_{\ell} - x^*_{\ell},0)$ is minimum. Let $i \in \mathcal{N}$ be an index with $x^*_i < \underline{x}_i$ and let $j$ be such that $i \in \mathcal{N}_j$. Then there must exist $k \in \mathcal{N}_j \backslash \lbrace i \rbrace$ such that $x^*_k > \underline{x}_k$ since otherwise $\sum_{\ell \in \mathcal{N}_j} x^*_{\ell} < \sum_{\ell \in \mathcal{N}_j} \underline{x}_{\ell} = L_j$. 

Let $\bar{\epsilon} := \min (\underline{x}_i - x^*_i, x^*_k - \underline{x}_k)$ and let $\epsilon \in (0,\bar{\epsilon}]$. Then the solution $x'$ given by
\begin{equation*}
x'_{\ell} = \left\{
\begin{array}{ll}
x^*_{\ell} + \epsilon & \text{if } \ell = i, \\
x^*_{\ell} - \epsilon & \text{if } \ell = k, \\
x^*_{\ell} & \text{otherwise,}
\end{array}
\right.
\end{equation*}
is feasible for Problem~\ref{prob} since $x'_i = x^*_i + \epsilon \leq x^*_i + \bar{\epsilon} \leq x^*_i + \underline{x}_i - x^*_i = \underline{x}_i$, $x'_k = x^*_k - \epsilon \geq x^*_k - \bar{\epsilon} \geq x^*_k - x^*_k + \underline{x}_k = \underline{x}_k$, and $\underline{x}^j$ and $x^*$ are feasible for Problem~\ref{prob_QRA}$^j(L_j)$ and Problem~\ref{prob} respectively. Moreover, since $x^*$ is an optimal solution to Problem~\ref{prob}, we have that
\begin{equation*}
\sum_{j'=1}^m \frac{1}{2} w_{j'} \left( \sum_{\ell \in \mathcal{N}_{j'}}  x^*_i \right)^2 + \sum_{\ell=1}^n \left(\frac{1}{2}a_{\ell} (x^*_{\ell})^2 + b_{\ell} x^*_{\ell} \right) 
\leq
\sum_{j'=1}^m \frac{1}{2} w_{j'} \left( \sum_{\ell \in \mathcal{N}_{j'}}  x'_i \right)^2 + \sum_{\ell=1}^n \left(\frac{1}{2}a_{\ell} (x'_{\ell})^2 + b_{\ell} x'_{\ell} \right) 
.
\end{equation*}
It follows by definition of $x'$ that
\begin{align}
0 &\leq \frac{1}{2} a_i (x'_i)^2 + b_i x'_i + \frac{1}{2} a_k (x'_k)^2 + b_k x'_k 
-
\frac{1}{2} a_i (x^*_i)^2 - b_i x^*_i - \frac{1}{2} a_k (x^*_k)^2 - b_k x^*_k \nonumber
\\
&=\frac{1}{2} a_i (x^*_i + \epsilon)^2 + b_i (x^*_i + \epsilon) + \frac{1}{2} a_k (x^*_k - \epsilon)^2 + b_k (x^*_k - \epsilon) 
-
\frac{1}{2} a_i (x^*_i)^2 - b_i x^*_i - \frac{1}{2} a_k (x^*_k)^2 - b_k x^*_k \nonumber
\\
&=
a_i x^*_i \epsilon + \frac{1}{2}a_i \epsilon^2 + b_i \epsilon
- a_k x^*_k \epsilon + \frac{1}{2}a_k \epsilon^2 + b_k \epsilon. \label{eq_el_new_01}
\end{align}
Analogously, the solution $(\underline{x}')^j$ given by
\begin{equation*}
\underline{x}'_{\ell} = \left\{
\begin{array}{ll}
\underline{x}_{\ell} - \epsilon & \text{if } \ell = i, \\
\underline{x}_{\ell} + \epsilon & \text{if } \ell = k, \\
\underline{x}_{\ell} & \text{otherwise,}
\end{array}
\right.
\end{equation*}
is feasible for \ref{prob_QRA}$^j(L_j)$ since $\underline{x}'_i = \underline{x}_i - \epsilon \geq \underline{x}_i - \bar{\epsilon} \geq \underline{x}_i - \underline{x}_i + x^*_i = x^*_i$, $\underline{x}'_k = \underline{x}_k + \epsilon \leq \underline{x}_k + \bar{\epsilon} \leq \underline{x}_k + x^*_k - \underline{x}_k = x^*_k$, and $x^*$ and $\underline{x}^j$ are feasible for Problem~\ref{prob} and Problem~\ref{prob_QRA}$^j(L_j$ respectively. Moreover, since $\underline{x}^j$ is optimal for Problem~\ref{prob_QRA}$^j(L_j)$, we have that
\begin{equation*}
\sum_{\ell \in \mathcal{N}_j} \left( \frac{1}{2} a_{\ell} (\underline{x}_{\ell})^2 + b_{\ell} \underline{x}_{\ell} \right)
\leq
\sum_{\ell \in \mathcal{N}_j} \left( \frac{1}{2} a_{\ell} (\underline{x}'_{\ell})^2 + b_{\ell} \underline{x}'_{\ell} \right).
\end{equation*}
It follows by definition of $(\underline{x}')^j$ that
\begin{align}
0 &\leq
  \frac{1}{2} a_i (\underline{x}'_{i})^2 + b_i \underline{x}'_i
 +
 \frac{1}{2} a_k (\underline{x}'_{k})^2 + b_k \underline{x}'_k
 -
  \frac{1}{2} a_i (\underline{x}_{i})^2 - b_i \underline{x}_i
 -
 \frac{1}{2} a_k (\underline{x}_{k})^2 - b_k \underline{x}_k  \nonumber \\
 &=
 \frac{1}{2} a_i (\underline{x}_{i} - \epsilon)^2 + b_i (\underline{x}_i - \epsilon)
 +
 \frac{1}{2} a_k (\underline{x}_{k} + \epsilon)^2 + b_k (\underline{x}_k + \epsilon)
 -
  \frac{1}{2} a_i (\underline{x}_{i})^2 - b_i \underline{x}_i
 -
 \frac{1}{2} a_k (\underline{x}_{k})^2 - b_k \underline{x}_k \nonumber \\
 &=
 -a_i \underline{x}_i \epsilon + \frac{1}{2} a_i \epsilon^2 - b_i \epsilon + a_k \underline{x}_k \epsilon + \frac{1}{2} a_k \epsilon^2 + b_k \epsilon.
 \label{eq_el_new_02}
\end{align}
Adding Equations~(\ref{eq_el_new_01}) and~(\ref{eq_el_new_02}) yields
\begin{equation*}
0 \leq a_i \epsilon (x^*_i - \underline{x}_i +\epsilon) + a_k \epsilon (\underline{x}_k - x^*_k + \epsilon)
\leq a_i \epsilon (-\bar{\epsilon} + \epsilon) + a_k \epsilon (-\bar{\epsilon} + \epsilon)
\leq 0.
\end{equation*}
This implies that both Equations~(\ref{eq_el_new_01}) and~(\ref{eq_el_new_02}) are equalities and thus that $x'$ and $(\underline{x}')^j$ are optimal for Problem~\ref{prob} and \ref{prob_QRA}$^j(L_j)$ respectively. However, since $x'_i  \leq \underline{x}_i$ and $x'_k \geq \underline{x}_k$, it holds that
\begin{align*}
\sum_{\ell \in \mathcal{N}_j} \max(\underline{x}_{\ell} - x'_{\ell},0) 
&= d - \max(\underline{x}_i - x^*_i,0) - \max(\underline{x}_k - x^*_k,0) + \max(\underline{x}_i - x'_i,0) + \max(\underline{x}_k - x'_k,0) \\
&= d - \underline{x}_i + x^*_i - 0 + \underline{x}_i - x'_i + 0 \\
&= d + x^*_i - x'_i \\
&= d - \epsilon.
\end{align*}
This is a contradiction with the definition of $x^*$ as the optimal solution that minimizes the expression $\sum_{\ell \in \mathcal{N}_j} \max(\underline{x}_{\ell} - x^*_{\ell},0)$. Hence, there exists an optimal solution satisfying the lower bounds $\underline{x}$.
\end{proof}

\subsection{Proof of Lemma~\ref{lemma_mono}}
\label{sec_app_lemma_mono}

\begin{lemma_mono}
For any $\lambda_1,\lambda_2 \in \mathbb{R}$ such that $\lambda_1 < \lambda_2$, it holds that $x_i(\lambda_1) \geq x_i(\lambda_2)$, $i \in \mathcal{N}$.
\end{lemma_mono}
\begin{proof}
Suppose that there exist $\lambda_1,\lambda_2$ with $\lambda_1 < \lambda_2$ such that for some $j \in \mathcal{M}$ and $i \in \mathcal{N}_j$ we have $x_i(\lambda_1) < x_i(\lambda_2)$. First, we show that there must exist an index $k \in \mathcal{N}_j \backslash \lbrace i \rbrace$ such that $x_k(\lambda_1) \geq x_k(\lambda_2)$. Subsequently, we show that the existence of such an index~$k$ leads to a contradiction.

For each $\ell \in \mathcal{N}_j$, we divide KKT-condition~(\ref{KKT_new_01}) by $a_{\ell}$:
\begin{equation}
\frac{w_j y_j}{a_{\ell}} + x_{\ell} + \frac{b_{\ell} + \lambda + \mu_{\ell}}{a_{\ell}} = 0, \quad \ell \in \mathcal{N}_j.
\label{eq_mono_case2_02}
\end{equation}
By summing Equation~(\ref{eq_mono_case2_02}) over $\mathcal{N}_j$, we obtain
\begin{equation}
0 = 
w_j y_j \sum_{\ell \in \mathcal{N}_j} \frac{1}{a_{\ell}} + \sum_{\ell \in \mathcal{N}_j} \left(x_{\ell} + \frac{b_{\ell} + \lambda + \mu_{\ell}}{a_{\ell}} \right) 
=  \left( 1 + w_j \sum_{\ell \in \mathcal{N}_j} \frac{1}{a_{\ell}} \right)y_j + \sum_{\ell \in \mathcal{N}_j}  \frac{b_{\ell} + \lambda + \mu_{\ell}}{a_{\ell}}.
\label{eq_mono_case2_01}
\end{equation}
Suppose that there is no index $k \in \mathcal{N}_j \backslash \lbrace i \rbrace$ such that $x_k(\lambda_1) \geq x_k(\lambda_2)$. Then for all $\ell \in \mathcal{N}_j$, we have $x_{\ell}(\lambda_1) < x_{\ell}(\lambda_2)$, which in turn implies $y_j (\lambda_1) < y_j (\lambda_2)$. It follows from Equation~(\ref{eq_mono_case2_01}), Property~\ref{prop_01}, and Lemma~\ref{lemma_tech} that
\begin{align*}
0 &= \left( 1 + w_j \sum_{\ell \in \mathcal{N}_j} \frac{1}{a_{\ell}} \right) (y_j (\lambda_2) - y_j(\lambda_1)) + \sum_{\ell \in \mathcal{N}_j} \frac{b_{\ell} - b_{\ell} + \lambda_2 - \lambda_ 1 + \mu_{\ell}(\lambda_2) - \mu_{\ell}(\lambda_1)}{a_{\ell}} \\
& > \sum_{\ell \in \mathcal{N}_j} \frac{b_{\ell} - b_{\ell} + \lambda_2 - \lambda_ 1 + \mu_{\ell}(\lambda_2) - \mu_{\ell}(\lambda_1) }{a_{\ell}}  \\
& > \sum_{\ell \in \mathcal{N}_j} \frac{ \mu_{\ell}(\lambda_2) - \mu_{\ell}(\lambda_1) }{a_{\ell}}  \geq 0.
\end{align*}
This is a contradiction, hence there must exist an index $k \in \mathcal{N}_j \backslash \lbrace i \rbrace$ with $x_k(\lambda_1) > x_k(\lambda_2)$.

We now show that the existence of the index~$k$ leads to a contradiction. By Lemma~\ref{lemma_tech}, we have $\mu_k(\lambda_1) \geq \mu_k(\lambda_2)$. It follows that
\begin{equation}
a_k x_k(\lambda_2) + b_k + \mu_k(\lambda_2)  \leq a_k x_k(\lambda_1) + b_k + \mu_k(\lambda_1).
\label{eq_mono_01}
\end{equation}
However, KKT-condition~(\ref{KKT_new_01}) implies that
\begin{equation*}
w_j y_j(\lambda_1) + a_i x_i (\lambda_1) + b_i + \mu_i(\lambda_1) 
= -\lambda_1 = w_j y_j(\lambda_1) + a_k x_k (\lambda_1) + \mu_k(\lambda_1) ,
\end{equation*}
which yields
\begin{equation}
a_i x_i(\lambda_1) + b_i + \mu_i(\lambda_1) = a_k x_k (\lambda_1) + b_k + \mu_k(\lambda_1) .
\label{eq_mono_03}
\end{equation}
Analogously, we have
\begin{equation}
a_i x_i(\lambda_2) + b_i + \mu_i(\lambda_2)  = a_k x_k(\lambda_2) + b_k + \mu_k(\lambda_2) .
\label{eq_mono_04}
\end{equation}
It follows from Equations~(\ref{eq_mono_01})-(\ref{eq_mono_04}) and Lemma~\ref{lemma_tech} that
\begin{align*}
a_k x_k(\lambda_2) + b_k + \mu_k(\lambda_2)  & \leq a_k x_k(\lambda_1) + b_k + \mu_k(\lambda_1)   \\
& = a_i x_i(\lambda_1) + b_i + \mu_i(\lambda_1)  \\
&< a_i x_i(\lambda_2) + b_i + \mu_i(\lambda_2)  \\
&= a_k x_k(\lambda_2) + b_k + \mu_k(\lambda_2).
\end{align*}
This is a contradiction, hence it must be that $x_i (\lambda_1) \geq x_i(\lambda_2)$. As this implies that $x_i (\lambda_1) \geq x_i(\lambda_2)$ for all $\lambda_1 < \lambda_2$ and $i \in \mathcal{N}$, the lemma is proven.
\end{proof}

\subsection{Proof of Lemma~\ref{lemma_mu}}
\label{sec_app_lemma_mu}

\begin{lemma_mu}
For all $i \in \mathcal{N}$, we have $\mu_i(\alpha_i) = \mu_i (\beta_i) = 0$.
\end{lemma_mu}
\begin{proof}
Let $i \in \mathcal{N}_j$ for some $j \in \mathcal{M}$. We prove the lemma for $\mu_i(\alpha_i)$; the proof for $\mu_i(\beta_i)$ is analogous. Consider the solutions $x(\alpha_i)$ and $x(\alpha_i + \epsilon)$ for an arbitrary $\epsilon > 0$ with $\alpha_i + \epsilon < \beta_i$. Note that $\mu_i(\alpha_i + \epsilon) = 0$ by Equation~(\ref{eq_bp_02}) and KKT-conditions~(\ref{KKT_new_04}) and~(\ref{KKT_new_05}). It follows from KKT-condition (\ref{KKT_new_01}) that
\begin{subequations}
\begin{align}
w_j y_j(\alpha_i) + a_{\ell} x_{\ell}(\alpha_i) + b_{\ell} + \alpha_i + \mu_{\ell}(\alpha_i) &=0, \quad \ell \in \mathcal{N}_j,\label{eq_lemma_mu_1}\\
w_j y_j (\alpha_i + \epsilon) + a_{\ell} x_{\ell}(\alpha_i + \epsilon) + b_{\ell} + \alpha_i + \epsilon + \mu_{\ell}(\alpha_i + \epsilon) &=0, \quad \ell \in \mathcal{N}_j. \label{eq_lemma_mu_2}
\end{align} \label{eq_lemma_mu}%
\end{subequations}
To show that $\mu_i(\alpha_i) = 0$, we show that $\mu_i(\alpha_i) \leq \epsilon$ if $w_j \geq 0$ and $\mu_i(\alpha_i) \leq \epsilon \sum_{\ell \in \mathcal{N}_j} \frac{1}{a_{\ell}}$ if $w_j < 0$. Since $\epsilon$ was chosen arbitrarily, $\sum_{\ell \in \mathcal{N}_j} \frac{1}{a_{\ell}} > 0$, and $\mu_i(\alpha_i) \geq 0$ by definition of $\alpha_i$, this implies in both cases that $\mu_i(\alpha_i) = 0$.

First, if $w_j \geq 0$, then $w_j y_j(\cdot)$ is non-increasing by Corollary~\ref{col_1}. Together with Lemma~\ref{lemma_mono} and the fact that $\mu_i(\alpha_i + \epsilon) = 0$, subtracting Equation~(\ref{eq_lemma_mu_2}) from Equation~(\ref{eq_lemma_mu_1}) for $\ell = i$ yields
\begin{align*}
0 &= w_j y_j(\alpha_i) - w_j y_j(\alpha_i + \epsilon) + a_i x_i(\alpha_i) - a_i x_i(\alpha_i + \epsilon) - \epsilon + \mu_i(\alpha_i) - \mu_i(\alpha_i + \epsilon) \\
&\geq - \epsilon + \mu_i(\alpha_i).
\end{align*}
It follows that $\mu_i(\alpha_i)  \leq \epsilon$. 

Second, if $w_j < 0$, then we can apply the same proof mechanism as was used in the proof of Lemma~\ref{lemma_mono} (see also Equations~(\ref{eq_mono_case2_02}) and~(\ref{eq_mono_case2_01})). By dividing Equations~(\ref{eq_lemma_mu_1}) and~(\ref{eq_lemma_mu_2}) by $a_{\ell}$ and summing them over the index set $\mathcal{N}_j$, we get the following together with Property~\ref{prop_01} and Corollary~\ref{col_2}:
\begin{align*}
0 &= \sum_{\ell \in \mathcal{N}_j} \left( \frac{w_j y_j(\alpha_i)}{a_{\ell}} - \frac{w_j y_j(\alpha_i + \epsilon)}{a_{\ell}} +  x_{\ell}(\alpha_i) -  x_{\ell}(\alpha_i + \epsilon) -\frac{ \epsilon}{a_{\ell}} + \frac{\mu_{\ell}(\alpha_i)}{a_{\ell}} - \frac{\mu_{\ell}(\alpha_i + \epsilon)}{a_{\ell}} \right) \\
&= \left(1 + w_j\sum_{\ell \in \mathcal{N}} \frac{1}{a_{\ell}} \right) (y_j(\alpha_i) - y_j(\alpha_i + \epsilon))
- \epsilon \sum_{\ell \in \mathcal{N}_j} \frac{1}{a_{\ell}}
+ \sum_{\ell \in \mathcal{N}_j} \frac{\mu_{\ell}(\alpha_i) - \mu_{\ell}(\alpha_i + \epsilon)}{a_{\ell}} \\
& \geq 
- \epsilon \sum_{\ell \in \mathcal{N}_j} \frac{1}{a_{\ell}}
+ \sum_{\ell \in \mathcal{N}_j} \frac{\mu_{\ell}(\alpha_i) - \mu_{\ell}(\alpha_i + \epsilon)}{a_{\ell}} \\
& \geq  - \epsilon \sum_{\ell \in \mathcal{N}_j} \frac{1}{a_{\ell}}
+ \mu_i (\alpha_i).
\end{align*}
Here, the first inequality follows from Property~\ref{prop_01} and Lemma~\ref{lemma_mono} and the second equality follows from Corollary~\ref{col_2} and the fact that $\mu_i(\alpha_i + \epsilon) = 0$. It follows that $\mu_i(\alpha_i) \leq \epsilon  \sum_{\ell \in \mathcal{N}_j} \frac{1}{a_{\ell}}$.
\end{proof}

\subsection{Proof of Lemma~\ref{lemma_xk}}
\label{sec_app_lemma_xk}

\begin{lemma_xk}
For $j \in \mathcal{M}$ and $i,k \in \mathcal{N}_j$ , we have:
\begin{itemize}
\item
$a_i u_i + b_i > a_k u_k + b_k$ implies $\alpha_i \leq \alpha_k$, and;
\item
$a_i l_i + b_i > a_k l_k + b_k$ implies $\beta_i \leq \beta_k$.
\end{itemize}
\end{lemma_xk}
\begin{proof}
We prove the lemma for the case $a_i u_i + b_i > a_k u_k + b_k$; the proof for the case $a_i l_i + b_i > a_k l_k + b_k$ is analogous. We show that $x_i(\alpha_k) < u_i$, which implies by definition of $\alpha_i$ that $x_i(\alpha_k) < u_i = x_i(\alpha_i)$. Using Lemma~\ref{lemma_mono}, this yields $\alpha_i \leq \alpha_k$.

It follows from KKT-condition~(\ref{KKT_new_01}) that
\begin{equation*}
w_j y_j(\alpha_k) + a_k x_k(\alpha_k) + b_k + \mu_k(\alpha_k) = -\alpha_k = w_j y_j(\alpha_k) + a_i x_i(\alpha_k) + b_i + \mu_i(\alpha_k).
\end{equation*}
Since $\mu_k(\alpha_k) = 0$ by Lemma~\ref{lemma_mu} and $x_k(\alpha_k) = u_k$ by definition of~$\alpha_k$, the above is equivalent to
\begin{equation}
a_k u_k + b_k = a_i x_i(\alpha_k) + b_i + \mu_i(\alpha_k).
\label{eq_xi_01}
\end{equation}
Suppose that $x_i(\alpha_k) = u_i$. Then $\mu_i(\alpha_k) \geq 0$ by KKT-condition~(\ref{KKT_new_04}). It follows from Equation~(\ref{eq_xi_01}) that
\begin{equation*}
a_i x_i(\alpha_k) + b_i = a_k u_k + b_k - \mu_i(\alpha_k) < a_i u_i b_i = a_i x_k(\alpha_k),
\end{equation*}
which is a contradiction. Thus, it must hold that $x_i(\alpha_k) < u_i$.
\end{proof}

\section{Average execution times of Algorithms~\ref{alg_01} and~\ref{alg_02} and MOSEK}
\label{sec_app_results_tab}

\begin{table}[ht!]
\centering
\resizebox{\textwidth}{!}{
\begin{tabular}{r | cccccccccc}
\toprule
$C$ $\backslash$ $m$ & 1 & 2 & 5 & 10 & 20 & 50 & 100 & 200 & 500 & 1,000 \\
\midrule
1 & $1.07\cdot 10^{-4}$ &$	1.49\cdot 10^{-4}$ &$	2.38\cdot 10^{-4}$ &$	3.68\cdot 10^{-4}$ &$	8.87\cdot 10^{-4}$ &$	1.51\cdot 10^{-3}$ &$	2.94\cdot 10^{-3}$ &$	5.75\cdot 10^{-3}$ &$	1.47\cdot 10^{-2}$ &$	2.75\cdot 10^{-2}$ \\
2& $1.58\cdot 10^{-4}$ &$	2.06\cdot 10^{-4}$ &$	3.93\cdot 10^{-4}$ &$	7.23\cdot 10^{-4}$ &$	1.34\cdot 10^{-3}$ &$	3.29\cdot 10^{-3}$ &$	6.49\cdot 10^{-3}$ &$	1.34\cdot 10^{-2}$ &$	3.13\cdot 10^{-2}$ &$	6.29\cdot 10^{-2}$ \\
5 & $1.75\cdot 10^{-4}$ &$	2.77\cdot 10^{-4}$ &$	5.97\cdot 10^{-4}$ &$	1.08\cdot 10^{-3}$ &$	1.92\cdot 10^{-3}$ &$	4.96\cdot 10^{-3}$ &$	9.91\cdot 10^{-3}$ &$	2.09\cdot 10^{-2}$ &$	4.78\cdot 10^{-2}$ &$	9.89\cdot 10^{-2}$ \\
10 & $2.40\cdot 10^{-4}$ &$	3.72\cdot 10^{-4}$ &$	8.04\cdot 10^{-4}$ &$	1.61\cdot 10^{-3}$ &$	2.89\cdot 10^{-3}$ &$	8.73\cdot 10^{-3}$ &$	1.60\cdot 10^{-2}$ &$	3.22\cdot 10^{-2}$ &$	7.95\cdot 10^{-2}$ &$	1.53\cdot 10^{-1}$ \\
20 & $3.63\cdot 10^{-4}$ &$	5.61\cdot 10^{-4}$ &$	1.41\cdot 10^{-3}$ &$	2.88\cdot 10^{-3}$ &$	5.29\cdot 10^{-3}$ &$	1.37\cdot 10^{-2}$ &$	2.94\cdot 10^{-2}$ &$	5.40\cdot 10^{-2}$ &$	1.33\cdot 10^{-1}$ &$	2.72\cdot 10^{-1}$ \\
50 & $6.63\cdot 10^{-4}$ &$	1.54\cdot 10^{-3}$ &$	3.09\cdot 10^{-3}$ &$	6.56\cdot 10^{-3}$ &$	1.36\cdot 10^{-2}$ &$	3.09\cdot 10^{-2}$ &$	6.35\cdot 10^{-2}$ &$	1.29\cdot 10^{-1}$ &$	3.10\cdot 10^{-1}$ &$	6.10\cdot 10^{-1}$ \\
100 & $1.16\cdot 10^{-3}$ &$	2.84\cdot 10^{-3}$ &$	6.28\cdot 10^{-3}$ &$	1.22\cdot 10^{-2}$ &$	2.33\cdot 10^{-2}$ &$	6.12\cdot 10^{-2}$ &$	1.14\cdot 10^{-1}$ &$	2.20\cdot 10^{-1}$ &$	6.29\cdot 10^{-1}$ &$	1.18\cdot 10^{+0}$ \\
200 & $3.45\cdot 10^{-3}$ &$	4.64\cdot 10^{-3}$ &$	1.38\cdot 10^{-2}$ &$	2.37\cdot 10^{-2}$ &$	4.66\cdot 10^{-2}$ &$	1.10\cdot 10^{-1}$ &$	2.15\cdot 10^{-1}$ &$	4.56\cdot 10^{-1}$ &$	1.11\cdot 10^{+0}$ &$	2.45\cdot 10^{+0}$ \\
500 & $6.08\cdot 10^{-3}$ &$	1.13\cdot 10^{-2}$ &$	2.87\cdot 10^{-2}$ &$	5.78\cdot 10^{-2}$ &$	1.11\cdot 10^{-1}$ &$	2.83\cdot 10^{-1}$ &$	5.29\cdot 10^{-1}$ &$	1.04\cdot 10^{+0}$ &$	3.01\cdot 10^{+0}$ &$	5.43\cdot 10^{+0}$ \\
1,000 & $1.36\cdot 10^{-2}$ &$	2.17\cdot 10^{-2}$ &$	5.84\cdot 10^{-2}$ &$	1.09\cdot 10^{-1}$ &$	2.34\cdot 10^{-1}$ &$	5.43\cdot 10^{-1}$ &$	1.12\cdot 10^{+0}$ &$	2.28\cdot 10^{+0}$ &$	6.89\cdot 10^{+0}$ &$	1.27\cdot 10^{+1}$ \\
\bottomrule
\end{tabular}
}
\caption{Average execution times of Algorithm~\ref{alg_01} for each combination of $C$ and $m$.}
\label{tab_avg_alg01}
\end{table}

\begin{table}[ht!]
\centering
\resizebox{\textwidth}{!}{
\begin{tabular}{r | cccccccccc}
\toprule
$C$ $\backslash$ $m$ & 1 & 2 & 5 & 10 & 20 & 50 & 100 & 200 & 500 & 1,000 \\
\midrule
1 & $7.31\cdot 10^{-5}$ &$	1.11\cdot 10^{-4}$ &$	2.11\cdot 10^{-4}$ &$	3.77\cdot 10^{-4}$ &$	8.84\cdot 10^{-4}$ &$	1.69\cdot 10^{-3}$ &$	3.76\cdot 10^{-3}$ &$	6.90\cdot 10^{-3}$ &$	1.73\cdot 10^{-2}$ &$	3.50\cdot 10^{-2}$ \\
2 & $1.20\cdot 10^{-4}$ &$	1.80\cdot 10^{-4}$ &$	3.79\cdot 10^{-4}$ &$	7.23\cdot 10^{-4}$ &$	1.49\cdot 10^{-3}$ &$	3.69\cdot 10^{-3}$ &$	6.74\cdot 10^{-3}$ &$	1.47\cdot 10^{-2}$ &$	3.48\cdot 10^{-2}$ &$	6.93\cdot 10^{-2}$ \\
5 & $1.47\cdot 10^{-4}$ &$	2.63\cdot 10^{-4}$ &$	5.59\cdot 10^{-4}$ &$	1.08\cdot 10^{-3}$ &$	1.98\cdot 10^{-3}$ &$	4.95\cdot 10^{-3}$ &$	1.01\cdot 10^{-2}$ &$	2.01\cdot 10^{-2}$ &$	5.07\cdot 10^{-2}$ &$	1.03\cdot 10^{-1}$ \\
10 & $1.98\cdot 10^{-4}$ &$	3.55\cdot 10^{-4}$ &$	8.26\cdot 10^{-4}$ &$	1.61\cdot 10^{-3}$ &$	2.99\cdot 10^{-3}$ &$	7.82\cdot 10^{-3}$ &$	1.56\cdot 10^{-2}$ &$	3.01\cdot 10^{-2}$ &$	7.63\cdot 10^{-2}$ &$	1.55\cdot 10^{-1}$ \\
20 & $3.10\cdot 10^{-4}$ &$	5.57\cdot 10^{-4}$ &$	1.35\cdot 10^{-3}$ &$	2.56\cdot 10^{-3}$ &$	5.94\cdot 10^{-3}$ &$	1.27\cdot 10^{-2}$ &$	2.52\cdot 10^{-2}$ &$	5.00\cdot 10^{-2}$ &$	1.26\cdot 10^{-1}$ &$	2.55\cdot 10^{-1}$ \\
50 & $6.44\cdot 10^{-4}$ &$	1.34\cdot 10^{-3}$ &$	3.20\cdot 10^{-3}$ &$	6.23\cdot 10^{-3}$ &$	1.20\cdot 10^{-2}$ &$	2.85\cdot 10^{-2}$ &$	5.74\cdot 10^{-2}$ &$	1.23\cdot 10^{-1}$ &$	2.89\cdot 10^{-1}$ &$	5.84\cdot 10^{-1}$ \\
100 & $1.17\cdot 10^{-3}$ &$	2.17\cdot 10^{-3}$ &$	6.03\cdot 10^{-3}$ &$	1.05\cdot 10^{-2}$ &$	2.14\cdot 10^{-2}$ &$	5.19\cdot 10^{-2}$ &$	1.02\cdot 10^{-1}$ &$	2.06\cdot 10^{-1}$ &$	5.62\cdot 10^{-1}$ &$	1.12\cdot 10^{+0}$ \\
200 & $2.24\cdot 10^{-3}$ &$	4.23\cdot 10^{-3}$ &$	1.29\cdot 10^{-2}$ &$	2.04\cdot 10^{-2}$ &$	4.03\cdot 10^{-2}$ &$	1.00\cdot 10^{-1}$ &$	2.07\cdot 10^{-1}$ &$	4.05\cdot 10^{-1}$ &$	1.05\cdot 10^{+0}$ &$	2.22\cdot 10^{+0}$ \\
500 & $5.69\cdot 10^{-3}$ &$	1.05\cdot 10^{-2}$ &$	2.53\cdot 10^{-2}$ &$	5.26\cdot 10^{-2}$ &$	9.80\cdot 10^{-2}$ &$	2.55\cdot 10^{-1}$ &$	5.06\cdot 10^{-1}$ &$	1.02\cdot 10^{+0}$ &$	2.63\cdot 10^{+0}$ &$	5.42\cdot 10^{+0}$ \\
1,000 & $1.17\cdot 10^{-2}$ &$	2.00\cdot 10^{-2}$ &$	4.98\cdot 10^{-2}$ &$	1.00\cdot 10^{-1}$ &$	2.00\cdot 10^{-1}$ &$	5.14\cdot 10^{-1}$ &$	1.04\cdot 10^{+0}$ &$	2.07\cdot 10^{+0}$ &$	5.39\cdot 10^{+0}$ &$	1.10\cdot 10^{+1}$ \\
\bottomrule
\end{tabular}
}
\caption{Average execution times of Algorithm~\ref{alg_02} for each combination of $C$ and $m$.}
\label{tab_avg_alg02}
\end{table}

\begin{table}[ht!]
\centering
\resizebox{\textwidth}{!}{
\begin{tabular}{r | cccccccccc}
\toprule
$C$ $\backslash$ $m$ & 1 & 2 & 5 & 10 & 20 & 50 & 100 & 200 & 500 & 1,000 \\
\midrule
1 & $1.09\cdot 10^{-2}$ &$	1.26\cdot 10^{-2}$ &$	1.26\cdot 10^{-2}$ &$	1.26\cdot 10^{-2}$ &$	1.45\cdot 10^{-2}$ &$	1.26\cdot 10^{-2}$ &$	4.19\cdot 10^{-2}$ &$	1.89\cdot 10^{-2}$ &$	2.33\cdot 10^{-2}$ &$	2.95\cdot 10^{-2}$ \\
2 & $1.21\cdot 10^{-2}$ &$	1.20\cdot 10^{-2}$ &$	1.26\cdot 10^{-2}$ &$	1.70\cdot 10^{-2}$ &$	1.80\cdot 10^{-2}$ &$	2.29\cdot 10^{-2}$ &$	2.13\cdot 10^{-2}$ &$	2.87\cdot 10^{-2}$ &$	4.29\cdot 10^{-2}$ &$	7.25\cdot 10^{-2}$ \\
5 & $1.21\cdot 10^{-2}$ &$	1.20\cdot 10^{-2}$ &$	1.25\cdot 10^{-2}$ &$	1.24\cdot 10^{-2}$ &$	1.53\cdot 10^{-2}$ &$	2.00\cdot 10^{-2}$ &$	2.83\cdot 10^{-2}$ &$	4.23\cdot 10^{-2}$ &$	7.94\cdot 10^{-2}$ &$	1.36\cdot 10^{-1}$ \\
10 & $1.23\cdot 10^{-2}$ &$	1.27\cdot 10^{-2}$ &$	1.42\cdot 10^{-2}$ &$	1.57\cdot 10^{-2}$ &$	2.61\cdot 10^{-2}$ &$	3.55\cdot 10^{-2}$ &$	4.60\cdot 10^{-2}$ &$	7.39\cdot 10^{-2}$ &$	1.57\cdot 10^{-1}$ &$	3.15\cdot 10^{-1}$ \\
20 & $1.27\cdot 10^{-2}$ &$	1.38\cdot 10^{-2}$ &$	1.68\cdot 10^{-2}$ &$	2.09\cdot 10^{-2}$ &$	3.31\cdot 10^{-2}$ &$	5.26\cdot 10^{-2}$ &$	8.72\cdot 10^{-2}$ &$	1.54\cdot 10^{-1}$ &$	3.65\cdot 10^{-1}$ &$	8.12\cdot 10^{-1}$ \\
50 & $1.52\cdot 10^{-2}$ &$	1.98\cdot 10^{-2}$ &$	4.20\cdot 10^{-2}$ &$	5.41\cdot 10^{-2}$ &$	7.62\cdot 10^{-2}$ &$	1.61\cdot 10^{-1}$ &$	3.27\cdot 10^{-1}$ &$	6.35\cdot 10^{-1}$ &$	1.55\cdot 10^{+0}$ &$	3.27\cdot 10^{+0}$ \\
100 & $2.34\cdot 10^{-2}$ &$	4.32\cdot 10^{-2}$ &$	7.07\cdot 10^{-2}$ &$	1.20\cdot 10^{-1}$ &$	2.54\cdot 10^{-1}$ &$	5.83\cdot 10^{-1}$ &$	1.21\cdot 10^{+0}$ &$	2.15\cdot 10^{+0}$ &$	5.85\cdot 10^{+0}$ &$	1.20\cdot 10^{+1}$ \\
200 & $5.70\cdot 10^{-2}$ &$	1.00\cdot 10^{-1}$ &$	2.21\cdot 10^{-1}$ &$	4.42\cdot 10^{-1}$ &$	8.32\cdot 10^{-1}$ &$	2.03\cdot 10^{+0}$ &$	4.21\cdot 10^{+0}$ &$	8.73\cdot 10^{+0}$ &$	2.40\cdot 10^{+1}$ &$	4.41\cdot 10^{+1}$ \\
500 & $3.08\cdot 10^{-1}$ &$	5.92\cdot 10^{-1}$ &$	1.55\cdot 10^{+0}$ &$	3.30\cdot 10^{+0}$ &$	5.98\cdot 10^{+0}$ &$	1.63\cdot 10^{+1}$ &$	2.98\cdot 10^{+1}$ &$	6.20\cdot 10^{+1}$ &$	1.60\cdot 10^{+2}$ &$	3.51\cdot 10^{+2}$ \\
1,000 & $1.06\cdot 10^{+0}$ &$	3.05\cdot 10^{+0}$ &$	7.37\cdot 10^{+0}$ &$	1.54\cdot 10^{+1}$ &$	3.41\cdot 10^{+1}$ &$	7.84\cdot 10^{+1}$ &$	1.71\cdot 10^{+2}$ &$	3.75\cdot 10^{+2}$  & - & - \\
\bottomrule
\end{tabular}
}
\caption{Average execution times of MOSEK for each combination of $C$ and $m$.}
\label{tab_avg_mosek}
\end{table}

\bibliographystyle{abbrv}

\bibliography{C:/Users/SchootUiterkampMHH/Documents/Papers/library}

\begin{thebibliography}{10}

\bibitem{Tesla2020}
{Tesla Model 3 Standard Range price and specifications - EV database}, 2020.

\bibitem{Alexandrescu2017}
A.~Alexandrescu.
\newblock {Fast deterministic selection}.
\newblock In C.~S.~I. Raman, S.~P. Pissis, S.~J. Puglisi, and Rajeev, editors,
  {\em Leibniz International Proceedings in Informatics, LIPIcs}, volume~75.
  volume 75 of Leibniz International Proceedings in Informatics (LIPIcs), pages
  24:1-24:9, Schloss Dagstuhl--Leibniz-Zentrum fuer Informatik, 2017.

\bibitem{Bartlett1951}
M.~S. Bartlett.
\newblock {An inverse matrix adjustment arising in discriminant analysis}.
\newblock {\em The Annals of Mathematical Statistics}, 22(1):107--111, mar
  1951.

\bibitem{Beaudin2015}
M.~Beaudin and H.~Zareipour.
\newblock {Home energy management systems: A review of modelling and
  complexity}.
\newblock {\em Renewable and Sustainable Energy Reviews}, 45:318--335, 2015.

\bibitem{Blum1973}
M.~Blum, R.~W. Floyd, V.~Pratt, R.~L. Rivest, and R.~E. Tarjan.
\newblock {Time bounds for selection}.
\newblock {\em Journal of Computer and System Sciences}, 7(4):448--461, 1973.

\bibitem{Boyd2004}
S.~Boyd and L.~Vandenberghe.
\newblock {\em {Convex optimization}}.
\newblock Cambridge University Press, Cambridge, MA, 7 edition, 2004.

\bibitem{Bretthauer1997}
K.~M. Bretthauer and B.~Shetty.
\newblock {Quadratic resource allocation with generalized upper bounds}.
\newblock {\em Operations Research Letters}, 20(2):51--57, 1997.

\bibitem{Cosares1994}
S.~Cosares and D.~S. Hochbaum.
\newblock {Strongly polynomial algorithms for the quadratic transportation
  problem with a fixed number of sources}.
\newblock {\em Mathematics of Operations Research}, 19(1):94--111, feb 1994.

\bibitem{Deshpande2011}
J.~G. Deshpande, E.~Kim, and M.~Thottan.
\newblock {Differentiated services QoS in smart grid communication networks}.
\newblock {\em Bell Labs Technical Journal}, 16(3):61--81, 2011.

\bibitem{Esther2016}
B.~P. Esther and K.~S. Kumar.
\newblock {A survey on residential demand side management architecture,
  approaches, optimization models and methods}.
\newblock {\em Renewable and Sustainable Energy Reviews}, 59:342--351, 2016.

\bibitem{Gan2013}
L.~Gan, U.~Topcu, and S.~H. Low.
\newblock {Optimal decentralized protocol for electric vehicle charging}.
\newblock {\em IEEE Transactions on Power Systems}, 28(2):940--951, 2013.

\bibitem{Gerards2015}
M.~E.~T. Gerards, H.~A. Toersche, G.~Hoogsteen, T.~van~der Klauw, J.~L. Hurink,
  and G.~J.~M. Smit.
\newblock {Demand side management using profile steering}.
\newblock In {\em 2015 IEEE Eindhoven PowerTech}, Eindhoven, 2015. IEEE.

\bibitem{Harville1997}
D.~A. Harville.
\newblock {\em {Matrix algebra from a statistician's perspective}}.
\newblock Springer Verlag, New York, NY, 1 edition, 1997.

\bibitem{Helgason1980}
R.~Helgason, J.~Kennington, and H.~Lall.
\newblock {A polynomially bounded algorithm for a singly constrained quadratic
  program}.
\newblock {\em Mathematical Programming}, 18(1):338--343, 1980.

\bibitem{Hochbaum1994}
D.~S. Hochbaum.
\newblock {Lower and upper bounds for the allocation problem and other
  nonlinear optimization problems}.
\newblock {\em Mathematics of Operations Research}, 19(2):390--409, 1994.

\bibitem{Hochbaum1995}
D.~S. Hochbaum and S.-P. Hong.
\newblock {About strongly polynomial time algorithms for quadratic optimization
  over submodular constraints}.
\newblock {\em Mathematical Programming}, 69:269--309, 1995.

\bibitem{Hoogsteen2019}
G.~Hoogsteen, J.~L. Hurink, and G.~J.~M. Smit.
\newblock {DEMKit: A decentralized energy management simulation and
  demonstration toolkit}.
\newblock In {\em 2019 IEEE PES Innovative Smart Grid Technologies Europe
  (ISGT-Europe)}, Bucharest, 2019. IEEE.

\bibitem{Hoogsteen2017b}
G.~Hoogsteen, A.~Molderink, J.~L. Hurink, G.~J. Smit, B.~Kootstra, and
  F.~Schuring.
\newblock {Charging electric vehicles, baking pizzas, and melting a fuse in
  Lochem}.
\newblock {\em CIRED - Open Access Proceedings Journal}, 2017(1):1629--1633,
  oct 2017.

\bibitem{Ibaraki1988}
T.~Ibaraki and N.~Katoh.
\newblock {\em {Resource allocation problems: Algorithmic approaches}}.
\newblock The MIT Press, Cambridge, MA, 1 edition, 1988.

\bibitem{Kiwiel2008a}
K.~C. Kiwiel.
\newblock {Breakpoint searching algorithms for the continuous quadratic
  knapsack problem}.
\newblock {\em Mathematical Programming}, 112(2):473--491, nov 2007.

\bibitem{Knuth1998}
D.~E. Knuth.
\newblock {\em {The art of computer programming - Volume 3: Sorting and
  searching}}.
\newblock Addison-Wesley, Reading, MA, 2 edition, 1998.

\bibitem{AAC}
T.~R. Kuphaldt.
\newblock {Polyphase AC circuits}.
\newblock In {\em Lessons In Electric Circuits -- Volume II - AC}, chapter~10.
  6 edition, 2007.

\bibitem{Lobo2007}
M.~S. Lobo, M.~Fazel, and S.~Boyd.
\newblock {Portfolio optimization with linear and fixed transaction costs}.
\newblock {\em Annals of Operations Research}, 152(1):341--365, 2007.

\bibitem{Moriguchi2011}
S.~Moriguchi, A.~Shioura, and N.~Tsuchimura.
\newblock {M-convex function minimization by continuous relaxation approach:
  proximity theorem and algorithm}.
\newblock {\em SIAM Journal on Optimization}, 21(3):633--668, 2011.

\bibitem{Mosek2019}
MOSEK-ApS.
\newblock {MOSEK Optimizer API for Python. Release 9.1.13}, 2019.

\bibitem{Mou2015}
Y.~Mou, H.~Xing, Z.~Lin, and M.~Fu.
\newblock {Decentralized optimal demand-side management for PHEV charging in a
  smart grid}.
\newblock {\em IEEE Transactions on Smart Grid}, 6(2):726--736, mar 2015.

\bibitem{Patriksson2008}
M.~Patriksson.
\newblock {A survey on the continuous nonlinear resource allocation problem}.
\newblock {\em European Journal of Operational Research}, 185(1):1--46, 2008.

\bibitem{Patriksson2015}
M.~Patriksson and C.~Str{\"{o}}mberg.
\newblock {Algorithms for the continuous nonlinear resource allocation problem
  - new implementations and numerical studies}.
\newblock {\em European Journal of Operational Research}, 243(3):703--722,
  2015.

\bibitem{Sanathanan1971}
L.~Sanathanan.
\newblock {On an allocation problem with multistage constraints}.
\newblock {\em Operations Research}, 19(7):1647--1663, 1971.

\bibitem{Schootuiterkamp2016}
M.~H.~H. {Schoot Uiterkamp}.
\newblock {\em {Robust planning of electric vehicle charging}}.
\newblock M.sc. thesis, University of Twente, 2016.

\bibitem{SchootUiterkamp2017}
M.~H.~H. {Schoot Uiterkamp}, G.~Hoogsteen, M.~E.~T. Gerards, J.~L. Hurink, and
  G.~J.~M. Smit.
\newblock {Multi-commodity support in profile steering}.
\newblock In {\em 2017 IEEE PES Innovative Smart Grid Technologies Conference
  Europe (ISGT-Europe)}, Turin, 2017. IEEE.

\bibitem{Siano2014}
P.~Siano.
\newblock {Demand response and smart grids - a survey}.
\newblock {\em Renewable and Sustainable Energy Reviews}, 30:461--478, 2014.

\bibitem{Spedicato1975}
E.~Spedicato.
\newblock {A bound to the condition number of canonical rank-two corrections
  and applications to the variable metric method}.
\newblock {\em Calcolo}, 12(2):185--199, 1975.

\bibitem{Stevenson1975}
W.~D. Stevenson.
\newblock {\em {Elements of Power System Analysis}}.
\newblock McGraw Hill, New York, NY, 3 edition, 1975.

\bibitem{Tamir1993}
A.~Tamir.
\newblock {A strongly polynomial algorithm for minimum convex separable
  quadratic cost flow problems on two-terminal series—parallel networks}.
\newblock {\em Mathematical Programming}, 59(1):117--132, 1993.

\bibitem{Vegh2016}
L.~V{\'{e}}gh.
\newblock {A strongly polynomial algorithm for a class of minimum-cost flow
  problems with separable convex objectives}.
\newblock {\em SIAM Journal on Computing}, 45(5):1729--1761, 2016.

\bibitem{Vidal2018}
T.~Vidal, D.~Gribel, and P.~Jaillet.
\newblock {Separable convex optimization with nested lower and upper
  constraints}.
\newblock {\em INFORMS Journal on Optimization}, 1(1):71--90, 2019.

\bibitem{Weckx2015}
S.~Weckx and J.~Driesen.
\newblock {Load balancing with {EV} chargers and {PV} inverters in unbalanced
  distribution grids}.
\newblock {\em IEEE Transactions on Sustainable Energy}, 6(2):635--643, 2015.

\end{thebibliography}

\end{document}